\definecolor{darkgreen}{rgb}{0,0.45,0}
\newcommand{\sectionNotes}{\phantomsection\section*{Notes}\addcontentsline{toc}{section}{Notes}\markright{\textsc{\@chapapp{} \thechapter{} Notes}}}
\newcommand{\sectionExercises}[1]{\phantomsection\section*{Exercises}\addcontentsline{toc}{section}{Exercises}\markright{\textsc{\@chapapp{} \thechapter{} Exercises}}}
\newcommand{\jdeq}{\equiv}      
\newcommand{\defeq}{\vcentcolon\equiv}  
\def\prd#1{\@ifnextchar\bgroup{\prd@parens{#1}}{\@ifnextchar\sm{\prd@parens{#1}\@eatsm}{\prd@noparens{#1}}}}
\def\prd@parens#1{\@ifnextchar\bgroup%
  {\mathchoice{\@dprd{#1}}{\@tprd{#1}}{\@tprd{#1}}{\@tprd{#1}}\prd@parens}%
  {\@ifnextchar\sm%
    {\mathchoice{\@dprd{#1}}{\@tprd{#1}}{\@tprd{#1}}{\@tprd{#1}}\@eatsm}%
    {\mathchoice{\@dprd{#1}}{\@tprd{#1}}{\@tprd{#1}}{\@tprd{#1}}}}}
\def\@eatsm\sm{\sm@parens}
\def\prd@noparens#1{\mathchoice{\@dprd@noparens{#1}}{\@tprd{#1}}{\@tprd{#1}}{\@tprd{#1}}}
\def\lprd#1{\@ifnextchar\bgroup{\@lprd{#1}\lprd}{\@@lprd{#1}}}
\def\@lprd#1{\mathchoice{{\textstyle\prod}}{\prod}{\prod}{\prod}({\textstyle #1})\;}
\def\@@lprd#1{\mathchoice{{\textstyle\prod}}{\prod}{\prod}{\prod}({\textstyle #1}),\ }
\def\tprd#1{\@tprd{#1}\@ifnextchar\bgroup{\tprd}{}}
\def\@tprd#1{\mathchoice{{\textstyle\prod_{(#1)}}}{\prod_{(#1)}}{\prod_{(#1)}}{\prod_{(#1)}}}
\def\dprd#1{\@dprd{#1}\@ifnextchar\bgroup{\dprd}{}}
\def\@dprd#1{\prod_{(#1)}\,}
\def\@dprd@noparens#1{\prod_{#1}\,}
\def\lam#1{{\lambda}\@lamarg#1:\@endlamarg\@ifnextchar\bgroup{.\,\lam}{.\,}}
\def\@lamarg#1:#2\@endlamarg{\if\relax\detokenize{#2}\relax #1\else\@lamvar{\@lameatcolon#2},#1\@endlamvar\fi}
\def\@lamvar#1,#2\@endlamvar{(#2\,{:}\,#1)}
\def\@lameatcolon#1:{#1}
\def\lamu#1{{\lambda}\@lamuarg#1:\@endlamuarg\@ifnextchar\bgroup{.\,\lamu}{.\,}}
\def\@lamuarg#1:#2\@endlamuarg{#1}
\def\fall#1{\forall (#1)\@ifnextchar\bgroup{.\,\fall}{.\,}}
\def\exis#1{\exists (#1)\@ifnextchar\bgroup{.\,\exis}{.\,}}
\def\sm#1{\@ifnextchar\bgroup{\sm@parens{#1}}{\@ifnextchar\prd{\sm@parens{#1}\@eatprd}{\sm@noparens{#1}}}}
\def\sm@parens#1{\@ifnextchar\bgroup%
  {\mathchoice{\@dsm{#1}}{\@tsm{#1}}{\@tsm{#1}}{\@tsm{#1}}\sm@parens}%
  {\@ifnextchar\prd%
    {\mathchoice{\@dsm{#1}}{\@tsm{#1}}{\@tsm{#1}}{\@tsm{#1}}\@eatprd}%
    {\mathchoice{\@dsm{#1}}{\@tsm{#1}}{\@tsm{#1}}{\@tsm{#1}}}}}
\def\@eatprd\prd{\prd@parens}
\def\sm@noparens#1{\mathchoice{\@dsm@noparens{#1}}{\@tsm{#1}}{\@tsm{#1}}{\@tsm{#1}}}
\def\lsm#1{\@ifnextchar\bgroup{\@lsm{#1}\lsm}{\@@lsm{#1}}}
\def\@lsm#1{\mathchoice{{\textstyle\sum}}{\sum}{\sum}{\sum}({\textstyle #1})\;}
\def\@@lsm#1{\mathchoice{{\textstyle\sum}}{\sum}{\sum}{\sum}({\textstyle #1}),\ }
\def\tsm#1{\@tsm{#1}\@ifnextchar\bgroup{\tsm}{}}
\def\@tsm#1{\mathchoice{{\textstyle\sum_{(#1)}}}{\sum_{(#1)}}{\sum_{(#1)}}{\sum_{(#1)}}}
\def\dsm#1{\@dsm{#1}\@ifnextchar\bgroup{\dsm}{}}
\def\@dsm#1{\sum_{(#1)}\,}
\def\@dsm@noparens#1{\sum_{#1}\,}
\def\wtype#1{\@ifnextchar\bgroup%
  {\mathchoice{\@twtype{#1}}{\@twtype{#1}}{\@twtype{#1}}{\@twtype{#1}}\wtype}%
  {\mathchoice{\@twtype{#1}}{\@twtype{#1}}{\@twtype{#1}}{\@twtype{#1}}}}
\def\lwtype#1{\@ifnextchar\bgroup{\@lwtype{#1}\lwtype}{\@@lwtype{#1}}}
\def\@lwtype#1{\mathchoice{{\textstyle\mathsf{W}}}{\mathsf{W}}{\mathsf{W}}{\mathsf{W}}({\textstyle #1})\;}
\def\@@lwtype#1{\mathchoice{{\textstyle\mathsf{W}}}{\mathsf{W}}{\mathsf{W}}{\mathsf{W}}({\textstyle #1}),\ }
\def\twtype#1{\@twtype{#1}\@ifnextchar\bgroup{\twtype}{}}
\def\@twtype#1{\mathchoice{{\textstyle\mathsf{W}_{(#1)}}}{\mathsf{W}_{(#1)}}{\mathsf{W}_{(#1)}}{\mathsf{W}_{(#1)}}}
\def\dwtype#1{\@dwtype{#1}\@ifnextchar\bgroup{\dwtype}{}}
\def\@dwtype#1{\mathsf{W}_{(#1)}\,}
\def\wtypeh#1{\@ifnextchar\bgroup%
  {\mathchoice{\@lwtypeh{#1}}{\@twtypeh{#1}}{\@twtypeh{#1}}{\@twtypeh{#1}}\wtypeh}%
  {\mathchoice{\@@lwtypeh{#1}}{\@twtypeh{#1}}{\@twtypeh{#1}}{\@twtypeh{#1}}}}
\def\lwtypeh#1{\@ifnextchar\bgroup{\@lwtypeh{#1}\lwtypeh}{\@@lwtypeh{#1}}}
\def\@lwtypeh#1{\mathchoice{{\textstyle\mathsf{W}^h}}{\mathsf{W}^h}{\mathsf{W}^h}{\mathsf{W}^h}({\textstyle #1})\;}
\def\@@lwtypeh#1{\mathchoice{{\textstyle\mathsf{W}^h}}{\mathsf{W}^h}{\mathsf{W}^h}{\mathsf{W}^h}({\textstyle #1}),\ }
\def\twtypeh#1{\@twtypeh{#1}\@ifnextchar\bgroup{\twtypeh}{}}
\def\@twtypeh#1{\mathchoice{{\textstyle\mathsf{W}^h_{(#1)}}}{\mathsf{W}^h_{(#1)}}{\mathsf{W}^h_{(#1)}}{\mathsf{W}^h_{(#1)}}}
\def\dwtypeh#1{\@dwtypeh{#1}\@ifnextchar\bgroup{\dwtypeh}{}}
\def\@dwtypeh#1{\mathsf{W}^h_{(#1)}\,}
\newcommand{\proj}[1]{\ensuremath{\mathsf{pr}_{#1}}\xspace}
\newcommand{\pairr}[1]{{\mathopen{}(#1)\mathclose{}}}
\newcommand{\im}{\ensuremath{\mathsf{im}}} 
\newcommand{\idsym}{{=}}
\newcommand{\id}[3][]{\ensuremath{#2 =_{#1} #3}\xspace}
\newcommand{\idtypevar}[1]{\ensuremath{\mathsf{Id}_{#1}}\xspace}
\newcommand{\refl}[1]{\ensuremath{\mathsf{refl}_{#1}}\xspace}
\newcommand{\ct}{%
  \mathchoice{\mathbin{\raisebox{0.5ex}{$\displaystyle\centerdot$}}}%
             {\mathbin{\raisebox{0.5ex}{$\centerdot$}}}%
             {\mathbin{\raisebox{0.25ex}{$\scriptstyle\,\centerdot\,$}}}%
             {\mathbin{\raisebox{0.1ex}{$\scriptscriptstyle\,\centerdot\,$}}}
}
\newcommand{\opp}[1]{\mathord{{#1}^{-1}}}
\newcommand{\trans}[2]{\ensuremath{{#1}_{*}\mathopen{}\left({#2}\right)\mathclose{}}\xspace}
\newcommand{\mapfunc}[1]{\ensuremath{\mathsf{ap}_{#1}}\xspace} 
\newcommand{\mapdepfunc}[1]{\ensuremath{\mathsf{apd}_{#1}}\xspace} 
\let\ap\map
\let\apd\mapdep
\newcommand{\idfunc}[1][]{\ensuremath{\mathsf{id}_{#1}}\xspace}
\newcommand{\htpy}{\sim}
\newcommand{\eqv}[2]{\ensuremath{#1 \simeq #2}\xspace}
\newcommand{\eqvsym}{\simeq}    
\newcommand{\isequiv}{\ensuremath{\mathsf{isequiv}}}
\newcommand{\hfib}[2]{{\mathsf{fib}}_{#1}(#2)}
\let\type\UU
\newcommand{\typele}[1]{\ensuremath{{#1}\text-\mathsf{Type}}\xspace}
\renewcommand{\set}{\ensuremath{\mathsf{Set}}\xspace}
\newcommand{\prop}{\ensuremath{\mathsf{Prop}}\xspace}
\newcommand{\pointed}[1]{\ensuremath{#1_\bullet}}
\newcommand{\iscontr}{\ensuremath{\mathsf{isContr}}}
\newcommand{\trunc}[2]{\mathopen{}\left\Vert #2\right\Vert_{#1}\mathclose{}}
\newcommand{\tproj}[3][]{\mathopen{}\left|#3\right|_{#2}^{#1}\mathclose{}}
\newcommand{\brck}[1]{\trunc{}{#1}}
\newcommand{\emptyt}{\ensuremath{\mathbf{0}}\xspace}
\newcommand{\unit}{\ensuremath{\mathbf{1}}\xspace}
\newcommand{\ttt}{\ensuremath{\star}\xspace}
\newcommand{\bool}{\ensuremath{\mathbf{2}}\xspace}
\newcommand{\btrue}{{1_{\bool}}}
\newcommand{\bfalse}{{0_{\bool}}}
\newcommand{\inlsym}{{\mathsf{inl}}}
\newcommand{\inrsym}{{\mathsf{inr}}}
\newcommand{\inl}{\ensuremath\inlsym\xspace}
\newcommand{\inr}{\ensuremath\inrsym\xspace}
\newcommand{\glue}{\mathsf{glue}}
\newcommand{\blank}{\mathord{\hspace{1pt}\text{--}\hspace{1pt}}}
\newcommand{\N}{\ensuremath{\mathbb{N}}\xspace}
\newcommand{\istype}[1]{\mathsf{is}\mbox{-}{#1}\mbox{-}\mathsf{type}}
\newcommand{\atMostOne}{\mathsf{atMostOne}}
\def\defthm#1#2#3{%
  \newaliascnt{#1}{thm}
  \newtheorem{#1}[#1]{#2}
  \aliascntresetthe{#1}
  \crefname{#1}{#2}{#3}}
\newtheorem{thm}{Theorem}[section]
\crefname{thm}{Theorem}{Theorems}
\theoremstyle{definition}
\theoremstyle{remark}
\crefname{part}{Part}{Parts}
\crefname{figure}{Figure}{Figures}
\let\autoref\cref
\let\c@equation\c@thm
\numberwithin{equation}{section}
\def\noteson{%
\gdef\note##1{\mbox{}\marginpar{\color{blue}\textasteriskcentered\ ##1}}}
\newcommand{\Coq}{\textsc{Coq}\xspace}
\newcommand{\choice}[1]{\ensuremath{\mathsf{AC}_{#1}}\xspace}
\newcounter{symindex}
\renewcommand{\id}[3][]{
  \@ifnextchar\bgroup
    {#2 \mathbin{\idsym_{#1}} \id[#1]{#3}}
    {#2 \mathbin{\idsym_{#1}} #3}
  }
\renewcommand{\eqv}[2]{
  \@ifnextchar\bgroup
    {#1 \eqvsym \eqv{#2}}
    {#1 \eqvsym #2}
  }
\newcommand{\ctsym}{%
  \mathchoice{\mathbin{\raisebox{0.5ex}{$\displaystyle\centerdot$}}}%
             {\mathbin{\raisebox{0.5ex}{$\centerdot$}}}%
             {\mathbin{\raisebox{0.25ex}{$\scriptstyle\,\centerdot\,$}}}%
             {\mathbin{\raisebox{0.1ex}{$\scriptscriptstyle\,\centerdot\,$}}}
  }
\renewcommand{\ct}[3][]{
  \@ifnextchar\bgroup
    {#2 \mathbin{\ctsym_{#1}} \ct[#1]{#3}}
    {#2 \mathbin{\ctsym_{#1}} #3}
  }
\renewcommand{\@dprd}{\@tprd}
\renewcommand{\@dsm}{\@tsm}
\renewcommand{\@dprd@noparens}{\@tprd}
\renewcommand{\@dsm@noparens}{\@tsm}
\renewcommand{\@tprd}[1]{\mathchoice{{\textstyle\prod_{(#1)}\,}}{\prod_{(#1)}\,}{\prod_{(#1)}\,}{\prod_{(#1)}\,}}
\renewcommand{\@tsm}[1]{\mathchoice{{\textstyle\sum_{(#1)}\,}}{\sum_{(#1)}\,}{\sum_{(#1)}\,}{\sum_{(#1)}\,}}
\newcommand{\implicitargumentson}{\boolean{true}}
\newcommand{\@ifnextchar@starorbrace}[2]
  {\@ifnextchar*{#1}{\@ifnextchar\bgroup{#1}{#2}}}
\renewcommand{\prd}{\@ifnextchar*{\@iprd}{\@prd}}
\newcommand{\@prd}[1]
  {\@ifnextchar@starorbrace
    {\prd@parens{#1}}
    {\@ifnextchar\sm{\prd@parens{#1}\@eatsm}{\prd@noparens{#1}}}}
\newcommand{\@prd@parens}{\@ifnextchar*{\@iprd}{\prd@parens}}
\renewcommand{\prd@parens}[1]
  {\@ifnextchar@starorbrace
    {\@theprd{#1}\@prd@parens}
    {\@ifnextchar\sm{\@theprd{#1}\@eatsm}{\@theprd{#1}}}}
\newcommand{\@theprd}[1]
  {\mathchoice{\@dprd{#1}}{\@tprd{#1}}{\@tprd{#1}}{\@tprd{#1}}}
\renewcommand{\dprd}[1]{\@dprd{#1}\@ifnextchar@starorbrace{\dprd}{}}
\renewcommand{\tprd}[1]{\@tprd{#1}\@ifnextchar@starorbrace{\tprd}{}}
\newcommand{\@theiprd}[1]{\mathchoice{\@diprd{#1}}{\@tiprd{#1}}{\@tiprd{#1}}{\@tiprd{#1}}}
\newcommand{\@iprd}[2]{\@ifnextchar@starorbrace%
  {\@theiprd{#2}\@prd@parens}%
  {\@ifnextchar\sm%
    {\@theiprd{#2}\@eatsm}%
    {\@theiprd{#2}}}}
\def\@tiprd#1{
  \ifthenelse{\implicitargumentson}
    {\@@tiprd{#1}\@ifnextchar\bgroup{\@tiprd}{}}
    {\@tprd{#1}}}
\def\@@tiprd#1{\mathchoice{{\textstyle\prod_{\{#1\}}\,}}{\prod_{\{#1\}}\,}{\prod_{\{#1\}}\,}{\prod_{\{#1\}}\,}}
\def\@diprd{
  \ifthenelse{\implicitargumentson}
    {\@tiprd}
    {\@tprd}}
\def\@eatprd\prd{\@prd@parens}
\def\tfall#1{\forall_{(#1)}\@ifnextchar\bgroup{\,\tfall}{\,}}
\renewcommand{\fall}{\tfall}
\def\texis#1{\exists_{(#1)}\@ifnextchar\bgroup{\,\texis}{\,}}
\renewcommand{\exis}{\texis}
\def\uexis#1{\exists!_{(#1)}\@ifnextchar\bgroup{\,\uexis}{\,}}
\newcommand{\typefont}{\mathsf} 
\renewcommand{\isequiv}{\typefont{isEquiv}}
\renewcommand{\pairr}[1]{{\mathopen{}\langle #1\rangle\mathclose{}}}
\renewcommand{\type}{\typefont{Type}}
\newcommand{\mappingcone}[1]{\mathcal{C}_{#1}}
\newcommand{\tfcolim}{\typefont{colim}}
\newcommand{\tfinj}{\typefont{inj}}
\newcommand{\tfsurj}{\typefont{surj}}
\newcommand{\sbrck}[1]{\Vert #1\Vert}
\newcommand{\strunc}[2]{\Vert #2\Vert_{#1}}
\newcommand{\smal}{\mathcal{S}}
\newcommand{\eqrel}{\mathit{EqRel}}
\newcommand{\piw}{\ensuremath{\Pi\typefont{W}}} 
\renewcommand{\sslash}{/\!\!/}
\newcommand{\isepif}{\mathsf{epi}}
\newcommand{\isepi}[1]{\isepif(#1)}
\newcommand{\Nnegtwo}{\N_{-2}}
\newcommand{\stproj}[2]{|#2|_{#1}}
\newcommand{\sbproj}[1]{\stproj{}{#1}}
\newcommand{\coeq}[3]{#1/_{\!#2,#3}}
\newcommand{\eq}[2]{\typefont{eq}_{#1,#2}}
\newcommand{\catset}{\mathit{Set}}
\newcommand{\jctx}{\@ifnextchar*{\@jctxAlignTrue}{\@jctxAlignFalse}}
\newcommand{\@jctxAlignTrue}[2]{& \vdash #2~ctx}
\newcommand{\@jctxAlignFalse}[1]{\vdash #1~ctx}
\newcommand{\jtype}{\@ifnextchar*{\@jtypeAlignTrue}{\@jtypeAlignFalse}}
\newcommand{\@jtypeAlignFalse}[2]{#1\vdash #2~type}
\newcommand{\@jtypeAlignTrue}[3]{#2 & \vdash #3~type}
\newcommand{\jtermc}{\@ifnextchar*{\@jtermcAlignTrue}{\@jtermcAlignFalse}}
\newcommand{\@jtermcAlignTrue}[3]{\@jtermAlignTrue{#1}{}{#2}{#3}}
\newcommand{\@jtermcAlignFalse}[2]{\@jtermAlignFalse{}{#1}{#2}}
\newcommand{\jterm}{\@ifnextchar*{\@jtermAlignTrue}{\@jtermAlignFalse}}
\newcommand{\@jtermAlignTrue}[4]{#2 & \vdash #4:#3}
\newcommand{\@jtermAlignFalse}[3]{#1 \vdash #3:#2}
\newcommand{\jctxeq}{\@ifnextchar*{\@jctxeqAlignTrue}{\@jctxeqAlignFalse}}
\newcommand{\@jctxeqAlignTrue}[3]{& \vdash #2\jdeq #3~ctx}
\newcommand{\@jctxeqAlignFalse}[2]{\vdash #1\jdeq #2~ctx}
\newcommand{\jtypeeq}{\@ifnextchar*{\@jtypeeqAlignTrue}{\@jtypeeqAlignFalse}}
\newcommand{\@jtypeeqAlignTrue}[4]{#2 & \vdash #3\jdeq #4~type}
\newcommand{\@jtypeeqAlignFalse}[3]{#1\vdash #2\jdeq #3~type}
\newcommand{\jtermceq}{\@ifnextchar*{\@jtermceqAlignTrue}{\@jtermceqAlignFalse}}
\newcommand{\@jtermceqAlignTrue}[4]{\@jtermeqAlignTrue{#1}{}{#2}{#3}{#4}}
\newcommand{\@jtermceqAlignFalse}[3]{\@jtermeqAlignFalse{}{#1}{#2}{#3}}
\newcommand{\jtermeq}{\@ifnextchar*{\@jtermeqAlignTrue}{\@jtermeqAlignFalse}}
\newcommand{\@jtermeqAlignTrue}[5]{#2 & \vdash #4\jdeq #5:#3}
\newcommand{\@jtermeqAlignFalse}[4]{#1\vdash #3\jdeq #4:#2}
\let\inferenceold\inference
\renewcommand{\inference}[2]{\bgroup\renewcommand*{\arraystretch}{1}\inferenceold{#1}{#2}\egroup}
\newcommand{\ctxext}[2]{\@ctxext@ctx #1.\@ctxext@type #2}
\newcommand{\@ctxext}{\@ifnextchar\bgroup{\@@ctxext}{}}
\newcommand{\@ctxext@ctx}{\@ifnextchar\ctxext{\@ctxext@nested}{\@ifnextchar\ctxwk{\@ctxwk@nested}{\@ctxext}}}
\newcommand{\@ctxext@type}{\@ifnextchar\ctxext{\@ctxext@nested}{\@ifnextchar\subst{\@subst@nested}{\@ctxext}}}
\newcommand{\@@ctxext}[1]{\@ifnextchar\bgroup{\@ctxext@parens{#1}}{#1}}
\newcommand{\@ctxext@parens}[2]{(\ctxext{#1}{#2})}
\newcommand{\@ctxext@nested}[3]{\@ctxext@parens{#2}{#3}}
\newcommand{\subst}[2]{\@subst@type #2[\@subst@term #1]}
\newcommand{\@subst}{\@ifnextchar\bgroup{\@@subst}{}}
\newcommand{\@@subst}[1]{\@ifnextchar\bgroup{\subst{#1}}{#1}}
\newcommand{\@subst@term}{\@subst}
\newcommand{\@subst@type}{\@ifnextchar\ctxext{\@ctxext@nested}{\@ifnextchar\ctxwk{\@ctxwk@nested}{\@subst}}}
\newcommand{\@subst@nested}[3]{\@subst@parens{#2}{#3}}
\newcommand{\@subst@parens}[2]{(\subst{#1}{#2})}
\newcommand{\ctxwk}[2]{\langle\@ctxwk@act #1\rangle\@ctxwk@pass #2}
\newcommand{\@ctxwk}{\@ifnextchar\bgroup{\@@ctxwk}{}}
\newcommand{\@@ctxwk}[1]{\@ifnextchar\bgroup{\ctxwk{#1}}{#1}}
\newcommand{\@ctxwk@act}{\@ctxwk}
\newcommand{\@ctxwk@pass}{\@ifnextchar\ctxext{\@ctxext@nested}{\@ifnextchar\subst{\@subst@nested}{\@ctxwk}}}
\newcommand{\@ctxwk@parens}[2]{(\ctxwk{#1}{#2})}
\newcommand{\@ctxwk@nested}[3]{\@ctxwk@parens{#2}{#3}}
\newcommand{\jhom}[3]{\jterm{\jhom@dom@ctx #1}{\jhom@dom@wk{#1}{#2}}{#3}}
\newcommand{\jhom@dom@ctx}{\@ifnextchar\bgroup{\@jhom@dom@ctx}{}}
\newcommand{\@jhom@dom@ctx}[1]{\@ifnextchar\bgroup{\@@jhom@dom@ctx{#1}}{#1}}
\newcommand{\@@jhom@dom@ctx}[2]{\ctxext{\jhom@dom@ctx #1}{\jhom@dom@wk{#1}{#2}}}
\newcommand{\jhom@dom@wk}[2]{
  \jhom@dom@choice{\jhom@dom@wk@root #1\protect\null}
                  {\jhom@dom@wk@branchone #1\protect\null}
                  {\jhom@dom@wk@branchtwo #1\protect\null}
                  {#2}}
\newcommand{\jhom@dom@choice}[4]{
  \ifx {#1\protect\null}{\protect\null}
  \@jhom@dom@wk@branches{#2}{#3}{#4}
  \else
  \@jhom@dom@wk@root{#1}{#4}
  \fi
}
\newcommand{\@jhom@dom@wk@branches}[3]{\expandafter{\expandafter{\jhom@dom@wk}{#1}}{\jhom@dom@wk{#2}{#3}}}
\newcommand{\@jhom@dom@wk@root}[2]{\ctxwk{#1}{#2}}
\newcommand{\jhom@dom@wk@root}{\@ifnextchar\bgroup{\jhom@dom@wk@eat}{}}
\newcommand{\jhom@dom@wk@branchone}{\@ifnextchar\bgroup{\@jhom@dom@wk@branchone}{\jhom@dom@wk@eat}}
\newcommand{\@jhom@dom@wk@branchone}[1]{#1\jhom@dom@wk@eat}
\newcommand{\jhom@dom@wk@branchtwo}{\@ifnextchar\bgroup{\@jhom@dom@wk@branchtwo}{\jhom@dom@wk@eat}}
\newcommand{\@jhom@dom@wk@branchtwo}[1]{\@ifnextchar\bgroup{\@@jhom@dom@wk@branchtwo}{\jhom@dom@wk@eat}}
\newcommand{\@@jhom@dom@wk@branchtwo}[1]{#1\jhom@dom@wk@eat}
\newcommand{\jhom@dom@wk@eat}[1]{\@ifnextchar{\@let@token\null}{}{\jhom@dom@wk@eat}}
\newcommand{\pt}[1][]{*_{
  \@ifnextchar\undergraph{\@undergraph@nested}
    {\@ifnextchar\underovergraph{\@underovergraph@nested}{}}#1}}
\newcommand{\pts}[1]{{\@graphop@nested{#1}}_{0}}
\newcommand{\edg}[1]{{\@graphop@nested{#1}}_{1}}
\newcommand{\@graphop@nested}[1]
  {\@ifnextchar\ctxext{\@ctxext@nested}
      {\@ifnextchar\undergraph{\@undergraph@nested}
         {\@ifnextchar\underovergraph{\@underovergraph@nested}{}}}
    #1}
\newcommand{\@undergraphtest}[2]{\@ifnextchar({#1}{#2}}
\newcommand{\undergraph}[2]{\@undergraphtest{\@undergraph@parens{#1}{#2}}{\@undergraph{#1}{#2}}}
\newcommand{\@undergraph}[2]{{#2/#1}}
\newcommand{\@undergraph@nested}[3]{\@undergraph@parens{#2}{#3}}
\newcommand{\@undergraph@parens}[2]{(\@undergraph{#1}{#2})}
\newcommand{\underovergraph}[2]{\@underovergraphtest{\@underovergraph@parens{#1}{#2}}{\@underovergraph{#1}{#2}}}
\newcommand{\@underovergraph}[2]{{#2}\,{\parallel}\,{#1}}
\newcommand{\@underovergraphtest}{\@undergraphtest}
\newcommand{\@underovergraph@parens}[2]{(\@underovergraph{#1}{#2})}
\newcommand{\@underovergraph@nested}[3]{\@underovergraph@parens{#2}{#3}}
\tikzset{patharrow/.style={double,double equal sign distance,-,font=\scriptsize}}
\tikzset{description/.style={fill=white,inner sep=2pt}}
\tikzset{commutative diagrams/column sep/Huge/.initial=18ex}
\newlength\minalignvsep
\def\align@preamble{%
   &\hfil
    \setboxz@h{\@lign$\m@th\displaystyle{##}$}%
    \ifnum\row@>\@ne
    \ifdim\ht\z@>\ht\strutbox@
    \dimen@\ht\z@
    \advance\dimen@\minalignvsep
    \ht\strutbox\dimen@
    \fi\fi
    \strut@
    \ifmeasuring@\savefieldlength@\fi
    \set@field
    \tabskip\z@skip
   &\setboxz@h{\@lign$\m@th\displaystyle{{}##}$}%
    \ifnum\row@>\@ne
    \ifdim\ht\z@>\ht\strutbox@
    \dimen@\ht\z@
    \advance\dimen@\minalignvsep
    \ht\strutbox@\dimen@
    \fi\fi
    \strut@
    \ifmeasuring@\savefieldlength@\fi
    \set@field
    \hfil
    \tabskip\alignsep@
}
\begin{document}
\title{Sets in homotopy type theory}
\author{Egbert Rijke \and Bas Spitters
\thanks{The research leading to these results has received funding from the 
  European Union's $7$th Framework Programme under grant agreement nr.~243847 
  (ForMath).}}
\date{\today}
\maketitle

\begin{abstract}
Homotopy Type Theory may be seen as an internal language 
for the $\infty$-category of weak $\infty$-groupoids which in particular
models the univalence axiom.
Voevodsky proposes this language for weak $\infty$-groupoids as a new foundation for mathematics 
called the Univalent Foundations of Mathematics.
It includes the sets as weak $\infty$-groupoids
with contractible connected components, and thereby it includes (much of) the
traditional set theoretical foundations as a special case. 
We thus wonder whether those `discrete' groupoids
do in fact form a (predicative) topos. More generally, homotopy type theory is 
conjectured to be the internal language of `elementary'
$\infty$-toposes. We prove that sets in homotopy
type theory form a $\piw$-pretopos. This is similar to the fact that the
$0$-truncation of an $\infty$-topos is a topos. We show that both a subobject classifier and a
$0$-object classifier are available for the type theoretical universe of sets.
However, both of these are large and moreover, the $0$-object classifier for
sets is a function between $1$-types (i.e.~groupoids) rather than between sets.
Assuming an impredicative propositional resizing rule we may render the 
subobject classifier small and then we actually obtain a topos of sets.
\end{abstract}

\tableofcontents

\section*{Introduction
  \phantomsection\addcontentsline{toc}{section}{Introduction}}

\emph{A preliminary version of this paper was ready when the standard reference book on homotopy type theory~\cite{TheBook} was produced.
In fact, many of the results of this paper can now also be found in chapter 10 of that book.
Conversely, the collaborative writing of that chapter helped us to clarify the presentation of the present article.
The paper is also meant to give a readable account of some computer proofs which have meanwhile found their way to 
\url{https://github.com/HoTT/HoTT/}.}

Homotopy type theory~\cite{awodey2012type} extends the Curry-Howard correspondence between 
simply typed $\lambda$-calculus, cartesian closed categories and minimal logic, 
via extensional dependent type theory, locally cartesian closed categories and 
predicate logic~\cite{lambek1988introduction,jacobs1999categorical} to Martin-L\"of type theory with identity types and 
certain homotopical models. The univalent foundations program~\cite{kapulkin2012univalence,pelayo2012homotopy,TheBook}
extends homotopy type theory with the so-called univalence axiom, thus providing a language for $\infty$-groupoids.
Voevodsky's insight was that this can serve as a new foundation for mathematics.
The $\infty$-groupoids form the prototypical higher topos~\cite{rezk2010toposes,lurie2009higher}.
In fact, homotopy type theory and the univalence axiom can be interpreted in any such higher topos~\cite{shulman2012univalence}.
It is conjectured~\cite{awodey2012type,shulman2012univalence} that homotopy type theory with univalence and so-called higher inductive 
types can provide an `elementary' definition of a higher topos. 
In this way it extends the previous program to use toposes~\cite{MacLaneMoerdijk,johnstone:elephant} as a foundation of 
mathematics.

In the present article we connect the theory of sets ($0$-types) in the univalent 
foundations with the theory of predicative toposes 
\cite{MoerdijkPalmgren2002,vandenBerg2012}.
The prototypical example of a predicative topos is the category of setoids in 
Martin-L\"of type theory. A setoid~\cite{Bishop1967} is a 
pair of a type with an equivalence relation on it. In this respect, homotopy type theory may be seen as a generalization of the 
rich type theory which may be modeled in setoids~\cite{hofmann1995extensional,Altenkirch1999}.
A setoid is also a groupoid in which all 
hom-sets have at most one inhabitant. Thus groupoids generalize setoids. 
Hofmann and Streicher showed that groupoids form a model for intensional type
theory~\cite{hofmann1998groupoid}. 
In their article Hofmann and Streicher propose to investigate whether 
higher groupoids can also model Martin-L\"of type theory; 
they moreover suggest a form of the univalence axiom for categories:
that isomorphic objects be equal.
The Streicher/Voevodsky's Kan simplicial set model of type 
theory~\cite{kapulkin2012univalence,Streicher:sstt} is such a higher dimensional version 
of Hofmann and Streicher's groupoid model. 
Moreover, Voevodsky and Streicher recognized that the univalence property holds for Kan 
simplicial sets. Voevodsky proposed to investigate Martin-L\"of type theory with the univalence axiom 
as a new foundation for mathematics. Later it was found that the addition of higher inductive types was necessary to model general homotopy 
colimits.

Grothendieck conjectured that Kan simplicial sets and weak $\infty$-groupoids 
are equivalent, however, precisely defining this equivalence is the topic of 
active research around the `Grothendieck homotopy hypothesis'.
As emphasized by Coquand~\cite{Cubical}, both the $0$-truncated weak $\infty$-groupoids and the $0$-truncated Kan 
simplicial sets are similar to setoids, and constructively so. 
This is made precise for instance by the fact that the $0$-truncation of a 
model topos is a Grothendieck topos \cite[Prop 9.2]{rezk2010toposes} and every 
Grothendieck topos arises in this way \cite[Prop 9.4]{rezk2010toposes}. 
We prove an internal version of the former result. 
An internal version of the latter result may follow by carrying out the constructive model construction in a (predicative) 
topos~\cite{Cubical}.

Predicative topos theory follows the methodology of algebraic set 
theory~\cite{JoyalMoerdijk1995}, a categorical treatment of set theory, 
which in particular captures the notion of smallness by considering pullbacks 
of a universally small map. 
It extends the ideas from the elementary theory of the category of sets 
(ETCS)~\cite{lawvere2003sets,palmgren2012constructivist} 
by including a universe. 
It thus seems to be an ideal framework to investigate $0$-types. 
However, there is a catch: under the univalence axiom, the universe of sets 
is itself not a set but a $1$-groupoid. 
Thus the existing framework of algebraic set theory will not be entirely 
sufficient, and needs to be revised for univalent purposes.
The proper treatment of universes is a main reason for preferring the $\infty$-groupoid model over the simpler setoid model.
Obviously, the possibility to do synthetic homotopy theory is another; see \cite{TheBook}.

The paper is organized as follows. 
In \autoref{sec:prelim} we sketch a quick overview of the relevant notions of 
homotopy type theory on which we rely and we provide the reader with background
and precise references to the corresponding results and proofs in 
\cite{TheBook}. 
We give slight generalizations of definitions and results where that is just
as easy as merely giving the results specifically about sets. In doing so, we
provide the reader with a sense to what parts of the theory are specific to sets.

The main body of our article is contained in \autoref{sec:quotient}. 
We begin by proving the principle of unique choice 
if we admit the $(-1)$-truncation operation, and we show that epimorphisms are 
surjective. We then prove that $\catset$ is a regular category. 
When we add quotients, $\catset$ becomes exact and even a $\piw$-pretopos,
which is one of the main results of our paper. 
By adding the univalence axiom we show that the groupoid $\set$ is 
an $0$-object classifier. In fact, we do this by showing that the universe 
$\type$ is an object classifier, followed by showing that $\typele{n}$ is an 
$n$-object classifier for every $n:\N$. This is our other main result. 

In \autoref{sec:multiplechoice} we discuss the representation axiom, 
the collection axiom and the axiom of multiple choice. 
These axioms from algebraic set theory are used in stronger systems for 
predicative topos theory.

We expect the reader to be familiar with type theory, category theory, algebraic 
set theory and basic homotopy theory. All background information can be found in~\cite{TheBook}.

\subsection*{Notations, conventions and assumptions}
In this article we use the notation and conventions developed in~\cite{TheBook} and standard categorical definitions as in
\cite{johnstone:elephant}. 
We use Martin-L\"of intensional type theory with a hierarchy of universes \'a la Russell, universe polymorphism and
typical ambiguity.
This means that a definition like $\set$ really gives a definition at each universe level.
There are only a few places where we need to be explicit about universe levels.
The proof that epis are surjective, \autoref{prop:images_are_coequalizers} is one such place where we need both small sets and a large power set.
As is common in homotopy type theory, we will freely use the axiom of function 
extensionality.
Using this axiom, all limits can be constructed in type 
theory~\cite{Avigad:limits,RijkeSpitters:colims}.
We use higher inductive types to implement 
pushouts~\cite{LumsdaineShulman} and truncations. Although the full details of the computational interpretation of such higher inductive 
types are being worked out, these concrete instances indeed do have a computational interpretation~\cite{Cubical,Barras}.

In our result that $\catset$ is a $\piw$-pretopos, we will also use the univalence 
axiom for mere propositions: for every two $(-1)$-types $P$ and $Q$, the function 
$(\id[\type]{P}{Q})\to(\eqv{P}{Q})$ is an equivalence. Univalence for mere 
propositions is nothing new to a topos theorist: it is equivalent to having a
subobject classifier (although it would be large by default).
It is also used in \autoref{prop:1surj_to_surj_to_pem}.
To show that $\pointed\type\to\type$ is an object classifier 
(\autoref{sec:object_classification}) we will use the (full) univalence axiom.

In the previous paragraph we have used informal categorical terminology such as 
`pushout' to refer to what would be interpreted as homotopy pushouts in the 
$\infty$-category of weak $\infty$-groupoids. 
We will usually omit the word `homotopy' when we talk about those categorical 
concepts and we will continue to use this naive $\infty$-category style without 
any rigorous claims about their interpretation in the model. 
We can be more precise when speaking about $1$-categories: they will be the 
pre-categories that are developed in~\cite{1categories}. 
Assuming the univalence axiom, $\catset$ is in fact a (Rezk complete) $1$-category.

\section{Preliminaries}\label{sec:prelim}

\subsection{The very basics of the univalent foundations}
We denote \emph{identity types} $\mathsf{Id}_A(x,y)$ by $\id[A]{x}{y}$ or simply
by $\id{x}{y}$.
The \emph{concatenation} of $p:\id[A]{x}{y}$ with $q:\id[A]{y}{z}$ will be denoted by
$\ct{p}{q}:\id[A]{x}{z}$. If $P:A\to\type$ is a family of types over $A$ and
$p:\id[A]{x}{y}$ is a path in $A$ and $u:P(x)$, we denote the 
\emph{transportation of $u$ along $p$} by $\trans{p}{u}:P(y)$.
The type $\sm{x:A}\prd{y:A}(\id[A]{y}{x})$ witnessing that a type $A$
is \emph{contractible} is written as $\iscontr(A)$ and the type
$\prd{x:A}\id{f(x)}{g(x)}$ of homotopies from $f:\prd{x:A}P(x)$ to 
$g:\prd{x:A}P(x)$ is written as $f\htpy g$.

A function
$f:A\to B$ is said to be an \emph{equivalence} if there is an element of type
\begin{equation*}
\isequiv(f)\defeq \left(\sm{g:B\to A}g\circ f\htpy\idfunc[A]\right)
\times\left(\sm{h:B\to A}f\circ h\htpy\idfunc[B]\right).
\end{equation*}
The \emph{homotopy fiber} $\sm{a:A}\id[B]{f(a)}{b}$ of $f$ at $b$ is denoted by $\hfib{f}{b}$. 
A function is an equivalence if and only if all its homotopy fibers are contractible. 
We write $\eqv{A}{B}$ for the type $\sm{f:A\to B}\isequiv(f)$ and we usually make
no notational distinction between an equivalence $e:\eqv{A}{B}$ and the underlying
function.

By path induction, the elimination principle for the identity type, there is a canonical function assigning an equivalence $\eqv{A}{B}$ to 
every path
$p:\id[\type]{A}{B}$. The \emph{univalence axiom} asserts that this function is an equivalence
between the types $\id[\type]{A}{B}$ and $\eqv{A}{B}$. The principle
of function extensionality is a consequence of the univalence axiom. 

In the rest of this preliminary chapter we sketch a quick overview of the
theory of homotopy $n$-types and the (co)completeness of those. 

\subsection{Introducing the type of sets}

We define the notion of being an $n$-type by recursion on $\Nnegtwo$,
where $\Nnegtwo$ is a version of the natural numbers which starts at $-2$.
We will be mainly concerned with the type \typele{0} of all $0$-types, which are the sets,
but it is not possible to ignore the other values entirely. A concrete reason
for this is that \typele{0} is itself a $1$-type; see \autoref{lem:ntype_is_sntype}.

\begin{defn}
A type is said to be a \emph{$(-2)$-truncated type} if it is contractible. Thus, we
define 
\begin{equation*}
\istype{(-2)}(A)\defeq \iscontr(A).
\end{equation*}
For $n:\Nnegtwo$, we say that $A$ is \emph{$(n+1)$-truncated} if there is an element
of type
\begin{equation*}
\istype{(n+1)}(A)\defeq \prd{x,y:A}\istype{n}(\id[A]{x}{y}).
\end{equation*}
We also say that $A$ is an \emph{$n$-type} if $A$ is $n$-truncated. We define
\begin{equation*}
\typele{n}\defeq \sm{A:\type}\istype{n}(A).
\end{equation*}
\end{defn}

Relying on the interpretation of identity types as path spaces, 
a useful way of looking at the $n$-truncated types is that $n$-types have no interesting
homotopical structure above truncation level $n$.

In the hierarchy of truncatedness we just introduced, we find the sets in \typele{0}. 
Those are the types $A$ with the property that for any two points $x,y:A$,
the identity type $\id[A]{x}{y}$ is a \emph{mere proposition}: contractible once inhabited. From a topological
point of view, the $0$-truncated spaces are those with contractible connected components. Such a space is homotopically
equivalent to a discrete space.

\begin{defn}
We introduce the following notations:
\begin{equation*}
\prop\defeq \typele{(-1)}\qquad\set \defeq \typele{0}.
\end{equation*}
When $A:\prop$, we also say that $A$ is a \emph{mere proposition}. Although
strictly speaking $\prop$ and $\set$ are dependent pair types, we shall make no
notational distinguishment between terms 
$P:\prop$ and $A:\set$ and their underlying types.
\end{defn}

\begin{rmk}
Strictly speaking, the category of sets is a different entity than the type of
all sets. Indeed, the type $\set$ is the class of objects of the category of sets
and the hom-sets are just the function types. Composition, identity morphisms
and the various necessary identifications are all inherited from general type
theory. The category of sets is denoted by $\catset$.
\end{rmk}

The following theorem provides a useful way to show that types are sets. In \cite{TheBook}, it is used to prove Hedberg's theorem and to prove that the Cauchy-reals are a set.

\begin{thm}[See 7.2.2 in \cite{TheBook}]\label{thm:h-set-refrel-in-paths-sets}
  Suppose $R$ is a reflexive mere relation on a type $X$ implying identity.
  Then $X$ is a set, and $R(x,y)$ is equivalent to $\id{x}{y}$ for all $x,y:X$.
\end{thm}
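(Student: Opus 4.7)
My plan is to adapt Hedberg's classical trick, which characterises sets as types admitting a weakly constant endomap on each identity type. Write $\rho : \prd{x:X} R(x,x)$ for the reflexivity witness and $r : \prd{x,y:X} R(x,y) \to \id{x}{y}$ for the assumed implication. First, by path induction on the identity type, I obtain for all $x,y : X$ a map $\phi_{x,y} : \id{x}{y} \to R(x,y)$ sending $\refl{x}$ to $\rho(x)$. The composite
\[
g_{x,y} \defeq r \circ \phi_{x,y} : \id{x}{y} \to \id{x}{y}
\]
factors through the mere proposition $R(x,y)$, and is therefore \emph{weakly constant}: $g_{x,y}(p) = g_{x,y}(q)$ for any $p,q : \id{x}{y}$.

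The crux of the argument is to show, by path induction on $p : \id{x}{y}$, that
\[
p = \ct{\opp{g_{x,x}(\refl{x})}}{g_{x,y}(p)}.
\]
When $p \equiv \refl{x}$ this reduces to the standard groupoid law $\refl{x} = \ct{\opp{g_{x,x}(\refl{x})}}{g_{x,x}(\refl{x})}$, which holds. Combining this formula with weak constancy, for any parallel $p, q : \id{x}{y}$ I get
\[
p = \ct{\opp{g_{x,x}(\refl{x})}}{g_{x,y}(p)} = \ct{\opp{g_{x,x}(\refl{x})}}{g_{x,y}(q)} = q,
\]
so $\id{x}{y}$ is a mere proposition for every $x, y : X$; equivalently, $X$ is a set.

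Once $X$ is known to be a set, the equivalence $\eqv{R(x,y)}{\id{x}{y}}$ comes essentially for free: both sides are mere propositions and I already have maps in both directions, namely $r$ and $\phi_{x,y}$, so they are logically equivalent mere propositions and hence equivalent as types.

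The only real obstacle is the path-induction/weak-constancy step, which is exactly Hedberg's insight: a weakly constant endomap of $\id{x}{y}$ forces $\id{x}{y}$ itself to be a mere proposition. The remaining steps are routine assembly.
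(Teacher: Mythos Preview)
Your proof is correct, but it follows a genuinely different route from the paper's. You run Hedberg's weak-constancy trick directly: build an endomap $g_{x,y}$ of each identity type that factors through the mere proposition $R(x,y)$, prove the formula $p = \ct{\opp{g_{x,x}(\refl{x})}}{g_{x,y}(p)}$ by path induction, and conclude that all parallel paths coincide. The paper instead argues via total spaces: it shows that the map $r(x,y):R(x,y)\to\id{x}{y}$ is a fiberwise equivalence by proving that the induced map $\sm{y:X}R(x,y)\to\sm{y:X}\id{x}{y}$ is an equivalence, which amounts to showing $\sm{y:X}R(x,y)$ is contractible (the codomain being a based path space). Contractibility is witnessed by center $\pairr{x,\rho(x)}$ and the contraction uses $r$ together with the fact that $R$ is mere-propositional. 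Your argument is more elementary and self-contained, needing only path induction and groupoid laws; the paper's argument is more structural, invoking the fiberwise-equivalence criterion (Theorem~4.7.7 of \cite{TheBook}) and singleton contractibility, and it yields the equivalence $\eqv{R(x,y)}{\id{x}{y}}$ directly rather than as a corollary of $X$ being a set.
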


\begin{proof}
  Let $\rho : \prd{x:X} R(x,x)$ be a proof of reflexivity of $R$,
  and consider a witness $f : \prd{x,y:X} R(x,y) \to (\id{x}{y})$ 
  of the assumption that $R$ implies identity.
  Note first that the two statements in the theorem are equivalent.
  For on one hand, if $X$ is a set, then $\id{x}{y}$ is a mere proposition, 
  and since there is a bi-implication to the mere proposition
  $R(x,y)$ by hypothesis, it must also be equivalent to it.
  On the other hand, if $\id{x}{y}$ is equivalent to $R(x,y)$, 
  then like the latter it is a mere proposition for all $x,y:X$, 
  and hence $X$ is a set.
  
  We show that each $f(x,y) : R(x,y) \to \id{x}{y}$ is an equivalence.
  By Theorem 4.7.7 in \cite{TheBook} it suffices to show that $f$ induces an 
  equivalence of total spaces:
  \begin{equation*}
    \eqv{\big(\sm{y:X}R(x,y)\big)}{\big(\sm{y:X}\id{x}{y}\big)}.
  \end{equation*}
  The type on the right is contractible, so it
  suffices to show that the type on the left is contractible too. As the center of
  contraction we take the pair $\pairr{x,\rho(x)}$.  It remains to show, for
  every ${y:X}$ and every $H:R(x,y)$ that
  \begin{equation*}
    \id{\pairr{x,\rho(x)}}{\pairr{y,H}},
  \end{equation*}
  which is by Theorem 2.7.2 of \cite{TheBook} equivalent to
  \begin{equation*}
  \sm{p:\id{x}{y}}\id{\trans{p}{\rho(x})}{H}.
  \end{equation*}
  But since $R(x,y)$ is a mere proposition, it suffices to show that
  $\id{x}{y}$, which we get from $f(H)$.
\end{proof}

\subsection{Closure properties of the $n$-types}
We list some of the basic properties of $n$-types; for proofs 
see~\cite{TheBook}. Most of the results we cite and mention
were proved by Voevodsky~\cite{voevodsky2013experimental} when he introduced the notion of \emph{homotopy
levels}. He tried to rationalize the numbering by starting at 0, we follow~\cite{TheBook} and the homotopical tradition and start at $-2$. 
The proof that $\typele{n}$ is closed under dependent products requires the 
function extensionality principle, which is itself a consequence of the 
univalence axiom. In fact, one doesn't need to use univalence to show that
function extensionality is equivalent to the principle that the $(-2)$-types
are closed under dependent products. The proof that for $n\geq -1$, $\typele{n}$ is also closed
under the $\mathsf{W}$ type constructor~\cite{awodey2012inductive} is a 
recent result by Danielsson~\cite{Danielsson}.

For the present paper, the results in 
\autoref{ntype-closed-begin} until \autoref{lem:ntype_is_sntype} are of 
particular interest in the case $n\jdeq 0$.

\begin{lem}[See Lemma 4.7.3 and Theorems 4.7.4 and 7.1.4 in \cite{TheBook}]\label{ntype-closed-begin}
A retract of an $n$-truncated type is $n$-truncated. 
Consequently, $n$-truncated types are closed under equivalence.
\end{lem}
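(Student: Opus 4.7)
The plan is to proceed by induction on $n : \Nnegtwo$, using the recursive definition of $\istype{n}$.

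For the base case $n \jdeq -2$, suppose $A$ is a retract of $B$ via $s : A \to B$, $r : B \to A$ with homotopy $\epsilon : r \circ s \htpy \idfunc[A]$, and suppose $B$ is contractible with center $b_0 : B$ and contraction $c : \prd{b:B} \id{b}{b_0}$. I will take $r(b_0)$ as the center of contraction for $A$. For any $a : A$, the path $\apfunc{r}(c(s(a))) : \id{r(s(a))}{r(b_0)}$ combined with $\epsilon(a)^{-1}$ gives the required contraction $\id{a}{r(b_0)}$.

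For the inductive step, assume the result at level $n$, and let $A$ be a retract of an $(n+1)$-truncated type $B$. To show $A$ is $(n+1)$-truncated, I need to show that for each $x, y : A$, the type $\id[A]{x}{y}$ is $n$-truncated. The key lemma is that $\id[A]{x}{y}$ is itself a retract of $\id[B]{s(x)}{s(y)}$: define the section by $p \mapsto \apfunc{s}(p)$, and the retraction by $q \mapsto \opp{\epsilon(x)} \ct \apfunc{r}(q) \ct \epsilon(y)$. The verification that the composite is the identity on $\id[A]{x}{y}$ is a straightforward path calculation using naturality of $\epsilon$, and reduces, when $q \jdeq \apfunc{s}(p)$, to an identification of $\opp{\epsilon(x)} \ct \apfunc{r \circ s}(p) \ct \epsilon(y)$ with $p$. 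Since $\id[B]{s(x)}{s(y)}$ is $n$-truncated by hypothesis on $B$, the inductive hypothesis gives that $\id[A]{x}{y}$ is $n$-truncated as desired.

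The consequence about equivalence is immediate: if $f : \eqv{A}{B}$ then $A$ is a retract of $B$ (via $f$ and a quasi-inverse $g$ with $g \circ f \htpy \idfunc[A]$), so $B$ being $n$-truncated implies $A$ is $n$-truncated; by symmetry the converse holds as well.

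The main obstacle is the path-algebra verification in the inductive step that the proposed section-retraction pair on identity types really forms a retract; nothing here is deep, but one must be careful about the direction of $\epsilon$ and the placement of inverses. Once that algebraic identity is established, the induction runs smoothly.
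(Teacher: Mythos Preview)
Your argument is correct: the induction on $n$ with the base case handled by pushing the center of contraction through $r$, and the inductive step handled by exhibiting $\id[A]{x}{y}$ as a retract of $\id[B]{s(x)}{s(y)}$ via $\apfunc{s}$ and $q\mapsto \opp{\epsilon(x)}\ct\apfunc{r}(q)\ct\epsilon(y)$, is exactly the standard proof. The naturality of $\epsilon$ gives $\apfunc{r\circ s}(p)\ct\epsilon(y)=\epsilon(x)\ct p$, from which the retraction identity follows immediately, so your ``main obstacle'' is indeed routine.

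Note that the paper itself does not supply a proof of this lemma; it only cites Lemma~4.7.3 and Theorems~4.7.4 and~7.1.4 of \cite{TheBook}. Your write-up is precisely the argument of Theorem~7.1.4 there, so there is nothing to contrast: you have reconstructed the referenced proof.
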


\begin{lem}[See Theorem 7.1.9 in \cite{TheBook}]\label{lem:hlevel_prod}
Let $A$ be a type and let $P$ be a family of types over $A$ with
$n$-truncated fibers. Then the dependent function type $\prd{x:A}P(x)$
is $n$-truncated.
\end{lem}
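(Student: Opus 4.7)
The plan is to prove this by induction on $n:\Nnegtwo$, with function extensionality carrying the inductive step and \cref{ntype-closed-begin} used to transport truncatedness along equivalences.

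For the base case $n\jdeq -2$, I would show that if every $P(x)$ is contractible with center $c(x) : P(x)$, then $\prd{x:A}P(x)$ is contractible with center $\lam{x} c(x)$. Given any $f:\prd{x:A}P(x)$, contractibility of each fiber gives a pointwise identification $\prd{x:A}\id{c(x)}{f(x)}$, i.e.\ a homotopy $\lam{x}c(x)\htpy f$, and then function extensionality converts this homotopy into a genuine identity $\id{\lam{x}c(x)}{f}$. This discharges $\istype{-2}(\prd{x:A}P(x))$.

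For the inductive step, suppose the statement holds at level $n$, and assume $P$ has $(n+1)$-truncated fibers. Given $f,g:\prd{x:A}P(x)$, I need $\id{f}{g}$ to be $n$-truncated. Function extensionality supplies an equivalence
\begin{equation*}
\eqv{\big(\id{f}{g}\big)}{\Parens{\prd{x:A}\id{f(x)}{g(x)}}}.
\end{equation*}
Each identity type $\id{f(x)}{g(x)}$ is $n$-truncated by the hypothesis on $P$, so the inductive hypothesis (applied to the family $\lam{x}\id{f(x)}{g(x)}$ over $A$) shows the right-hand side is $n$-truncated. By \cref{ntype-closed-begin}, being $n$-truncated is preserved by equivalence, so $\id{f}{g}$ is $n$-truncated as required.

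The only subtle point is that the argument relies essentially on function extensionality in both the base case and the inductive step, so it is not provable in pure Martin-Löf type theory; since the paper already assumes function extensionality as a standing hypothesis, this is unproblematic. No genuine obstacle remains beyond bookkeeping the induction.
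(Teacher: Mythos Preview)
Your argument is correct and is exactly the standard proof: the paper does not give its own proof of this lemma but defers to Theorem~7.1.9 of \cite{TheBook}, whose proof is precisely the induction on $n$ you outline, using function extensionality for the base case and for identifying $\id{f}{g}$ with $\prd{x:A}\id{f(x)}{g(x)}$ in the inductive step. Nothing further is needed.
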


\begin{lem}[See Theorem 7.1.8 in \cite{TheBook}]
Let $A$ be $n$-truncated and $P$ be a family of $n$-truncated
types over $A$. Then the dependent pair type $\sm{x:A}P(x)$
is $n$-truncated.
\end{lem}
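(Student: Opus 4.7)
The plan is to proceed by induction on $n:\Nnegtwo$, using the characterization of identity types in dependent pair types (Theorem 2.7.2 of \cite{TheBook}) in the induction step, and invoking the already-stated closure of $n$-truncated types under equivalence (\autoref{ntype-closed-begin}).

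For the base case $n\jdeq -2$, assume $A$ is contractible with center $a_0$ and each $P(x)$ is contractible with center $p_0(x)$. I would take $\pairr{a_0,p_0(a_0)}$ as the center of contraction of $\sm{x:A}P(x)$ and, given an arbitrary $\pairr{x,u}$, build a path to the center by first transporting along the contraction of $A$ and then using contractibility of the fiber at $a_0$. This uses only the $(-2)$-case of the statement about identity types, i.e.\ the fact that in a contractible type any two points are equal.

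For the induction step, assume the result for $n$, and suppose $A$ is $(n+1)$-truncated and $P(x)$ is $(n+1)$-truncated for each $x:A$. To show that $\sm{x:A}P(x)$ is $(n+1)$-truncated, I need to show that for all $\pairr{x,u},\pairr{y,v}:\sm{x:A}P(x)$ the identity type
\begin{equation*}
\id[\sm{x:A}P(x)]{\pairr{x,u}}{\pairr{y,v}}
\end{equation*}
is $n$-truncated. By Theorem 2.7.2 of \cite{TheBook}, this type is equivalent to
\begin{equation*}
\sm{p:\id[A]{x}{y}}\id[P(y)]{\trans{p}{u}}{v}.
\end{equation*}
Here $\id[A]{x}{y}$ is $n$-truncated by the hypothesis on $A$, and for every $p:\id[A]{x}{y}$ the fiber $\id[P(y)]{\trans{p}{u}}{v}$ is $n$-truncated by the hypothesis on $P(y)$. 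The inductive hypothesis then gives that this sum is $n$-truncated, and \autoref{ntype-closed-begin} transports $n$-truncatedness across the equivalence.

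No step looks seriously obstructive: both the characterization of paths in a $\Sigma$-type and closure under equivalence are already in hand. The only mild subtlety is to keep the induction organized so that in each step one is genuinely using the $n$-level hypothesis on $A$ and on each $P(x)$ at the correct places, and reducing cleanly to the inductive hypothesis applied to the family $\lamu{p} \id[P(y)]{\trans{p}{u}}{v}$ over $\id[A]{x}{y}$.
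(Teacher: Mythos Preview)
Your proof is correct and is the standard induction argument. Note that the paper does not actually give its own proof of this lemma; it merely cites Theorem 7.1.8 of \cite{TheBook}, whose proof is exactly the one you sketch (induction on $n$, using the characterization of paths in a $\Sigma$-type and closure under equivalence).
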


\begin{lem}\label{lem:hlevel_w}
Let $A$ be $n$-truncated, $n\geq -1$ and $P:A\to\type$ be a family of types
over $A$. Then the well-ordered type $\wtype{x:A}P(x)$ is $n$-truncated.
\end{lem}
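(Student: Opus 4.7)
The plan is a double induction: an outer induction on $n \geq -1$, and within each case an application of $W$-induction on the first element of the $W$-type. I will make essential use of the standard characterization of identity types for $W$-types, namely that
\begin{equation*}
(\id{\supp(a,f)}{\supp(a',f')}) \simeq \sm{p:\id{a}{a'}} \prd{b:P(a)} \id{f(b)}{f'(\trans{p}{b})},
\end{equation*}
which is an encode-decode result and will not be reproved here.

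In the base case $n = -1$, assume $A$ is a mere proposition and take $w, w' : \wtype{x:A}P(x)$. By $W$-induction on $w$, I may assume $w \jdeq \supp(a, f)$ with the inductive hypothesis that $\id{f(b)}{w''}$ holds for every $b : P(a)$ and every $w'' : \wtype{x:A}P(x)$. Destructuring $w'$ as $\supp(a', f')$ yields $p : \id{a}{a'}$ from $A$ being a mere proposition, while the $W$-induction hypothesis supplies $\id{f(b)}{f'(\trans{p}{b})}$ for each $b$. Packaging these with the characterization delivers $\id{w}{w'}$.

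For the inductive step, assume the lemma holds for $n$ and let $A$ be $(n+1)$-truncated. Given $w, w' : \wtype{x:A}P(x)$ I must show $\id{w}{w'}$ is $n$-truncated. By $W$-induction on $w$, assume $w \jdeq \supp(a, f)$ with the inductive hypothesis that $\id{f(b)}{w''}$ is $n$-truncated for every $b : P(a)$ and every $w'' : \wtype{x:A}P(x)$. Destructuring $w'$ as $\supp(a', f')$ and applying the characterization, $\id{w}{w'}$ is equivalent to $\sm{p:\id{a}{a'}} \prd{b:P(a)} \id{f(b)}{f'(\trans{p}{b})}$. Here $\id{a}{a'}$ is $n$-truncated since $A$ is $(n+1)$-truncated; each dependent product $\prd{b:P(a)} \id{f(b)}{f'(\trans{p}{b})}$ is $n$-truncated by the $W$-induction hypothesis combined with \autoref{lem:hlevel_prod}; the $\Sigma$-type is $n$-truncated by the preceding closure-under-$\Sigma$ lemma; and \autoref{ntype-closed-begin} transfers $n$-truncatedness across the equivalence.

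The main obstacle is the characterization of identity types of $W$-types displayed above, which is not included in the excerpt but is the standard encode-decode result essentially due to Danielsson. Once that is in hand, the rest of the proof is a routine matter of lining up the $W$-induction hypothesis with the appropriate closure properties of $n$-truncated types at each stage, and checking that the induction on $n$ starts cleanly at $n = -1$ (as required by the hypothesis $n \geq -1$, since the base case needs $A$ to be at least a mere proposition to extract the equality $p : \id{a}{a'}$).
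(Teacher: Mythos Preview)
The paper does not actually prove this lemma; it merely states it and attributes the result to Danielsson (see the sentence just before \autoref{ntype-closed-begin}: ``The proof that for $n\geq -1$, $\typele{n}$ is also closed under the $\mathsf{W}$ type constructor is a recent result by Danielsson''). So there is no in-paper proof to compare against.

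Your argument is correct and is essentially the Danielsson proof: characterize the identity type of $\wtype{x:A}P(x)$ via encode--decode as a $\Sigma$ of a path in $A$ and a $\Pi$ of paths in the $W$-type, then close up using the $W$-induction hypothesis together with the closure of $n$-types under $\Sigma$, $\Pi$, and equivalence. The outer induction on $n$ with a separate treatment of the base case $n=-1$ is exactly how the restriction $n\geq -1$ enters, and your handling of that case is fine. The one thing you are right to flag as an external dependency is the identity-type characterization for $W$-types; that is the real content, and once it is in hand the rest is bookkeeping, just as you say.
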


\begin{lem}[See Theorem 7.1.7 in \cite{TheBook}]
If $A$ is $n$-truncated, then $A$ is $(n+1)$-truncated. Hence if $A$ is
$n$-truncated, so is $\id[A]{x}{y}$ for each $x,y:A$.
\end{lem}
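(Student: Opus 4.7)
The plan is to prove the main implication by induction on $n:\Nnegtwo$, after which the consequence about identity types falls out immediately from the definition of $(n+1)$-truncatedness.

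For the base case $n\jdeq -2$, suppose $A$ is contractible with center $c:A$ and contraction $\phi:\prd{z:A}\id{c}{z}$. I need to show that $A$ is a mere proposition, i.e.\ that $\id[A]{x}{y}$ is contractible for every $x,y:A$. The natural candidate for the center is $\ct{\opp{\phi(x)}}{\phi(y)}:\id[A]{x}{y}$. To see that every $p:\id[A]{x}{y}$ is equal to this, I would do path induction on $p$, reducing to showing $\id{\refl{x}}{\ct{\opp{\phi(x)}}{\phi(x)}}$, which is a standard identity of the path groupoid.

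For the inductive step, suppose the claim holds for $n$, and assume $A$ is $(n+1)$-truncated, i.e.\ each identity type $\id[A]{x}{y}$ is $n$-truncated. By the inductive hypothesis, each $\id[A]{x}{y}$ is then $(n+1)$-truncated, which is exactly the statement that $A$ is $(n+2)$-truncated.

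The consequence is immediate: if $A$ is $n$-truncated, the main part of the theorem gives that $A$ is $(n+1)$-truncated, and by definition this means $\id[A]{x}{y}$ is $n$-truncated for every $x,y:A$.

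The main obstacle is the base case, since the inductive step is essentially definitional. The base case requires a short but genuine computation with the path algebra of the identity type of a contractible type; everything else is bookkeeping with the recursive definition of $\istype{n}$.
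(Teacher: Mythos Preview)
Your proof is correct and is essentially the standard argument (induction on $n$ with the only nontrivial work in the base case, where contractibility of $\id[A]{x}{y}$ is established via the contraction of $A$ and a path-induction reduction to a groupoid identity). The paper does not actually give its own proof of this lemma; it merely cites Theorem~7.1.7 of \cite{TheBook}, whose proof is exactly the one you have written, so there is nothing further to compare.
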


The second assertion in the following result requires the univalence axiom.

\begin{lem}[See Theorem 7.1.11 in \cite{TheBook}]\label{lem:ntype_is_sntype}
For any type $A$ and any $n:\Nnegtwo$, 
the type $\istype{n}(A)$ is a mere proposition. The type $\typele{n}$ is itself
an $(n+1)$-truncated type.
\end{lem}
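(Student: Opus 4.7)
My plan is to prove the two assertions in turn, both by induction or by reducing to earlier closure properties.

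For the first assertion, that $\istype{n}(A)$ is a mere proposition, I would proceed by induction on $n:\Nnegtwo$. In the base case $n\jdeq -2$, I must show that $\iscontr(A)$ is a mere proposition. Given two witnesses $(x,p), (y,q):\sm{x:A}\prd{z:A}\id{z}{x}$, the path $p(y):\id{y}{x}$ gives the first component, and for the second component I use function extensionality together with the fact that once $A$ is contractible any two paths between the same endpoints are equal (since the path space is itself contractible, by a standard calculation). For the inductive step I unfold
\[
\istype{n+1}(A)\jdeq \prd{x,y:A}\istype{n}(\id[A]{x}{y}),
\]
and the induction hypothesis says each fibre $\istype{n}(\id[A]{x}{y})$ is a mere proposition; then \autoref{lem:hlevel_prod} (closure of $n$-types under dependent products) in the case $n=-1$ yields that the whole dependent product is a mere proposition.

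For the second assertion, that $\typele{n}$ is $(n+1)$-truncated, the strategy is to characterize the identity type of $\typele{n}$ and then to apply univalence. For $(A,p), (B,q):\typele{n}$, the standard characterization of paths in a $\sm{}{}$-type gives
\[
\big(\id{(A,p)}{(B,q)}\big)\;\simeq\;\sm{e:\id[\type]{A}{B}}\id{\trans{e}{p}}{q}.
\]
By the first assertion $\istype{n}(B)$ is a mere proposition, so the fibre $\id{\trans{e}{p}}{q}$ is contractible for each $e$, and the total space is equivalent to $\id[\type]{A}{B}$. By univalence this in turn is equivalent to $\eqv{A}{B}\jdeq\sm{f:A\to B}\isequiv(f)$. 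Since $\isequiv(f)$ is a mere proposition and $A\to B$ is $n$-truncated by \autoref{lem:hlevel_prod} (because $B$ is $n$-truncated), closure of $n$-types under $\sm{}{}$ (the dependent pair closure lemma cited above) shows $\eqv{A}{B}$ is $n$-truncated. Transporting this back along the chain of equivalences, $\id{(A,p)}{(B,q)}$ is $n$-truncated, which is exactly what is needed for $\typele{n}$ to be $(n+1)$-truncated.

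The main obstacle is the base case of the first assertion: that $\iscontr(A)$ is a mere proposition. It is the only place where we genuinely need to do a path computation rather than invoke a prior closure lemma. Everything else is a formal consequence of the closure properties already listed together with univalence and the characterization of paths in $\sm{}{}$-types. Once $\iscontr(A)$ is known to be a proposition, all the higher cases propagate mechanically by induction and product-closure.
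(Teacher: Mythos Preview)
The paper does not give its own proof of this lemma; it is one of the preliminary facts simply cited from \cite{TheBook} (Theorem~7.1.11 there). So there is nothing in the paper to compare against directly. Your argument is essentially the standard one from \cite{TheBook}: induction on $n$ for the first assertion, and for the second assertion the chain
\[
\big(\id[\typele{n}]{(A,p)}{(B,q)}\big)\;\simeq\;(\id[\type]{A}{B})\;\simeq\;(\eqv{A}{B}),
\]
followed by the observation that $\eqv{A}{B}$ is $n$-truncated.

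One small gap to patch: in the second assertion at $n\jdeq -2$, you invoke closure of $n$-types under $\sm{}{}$ using that $\isequiv(f)$ is a mere proposition. But a mere proposition is a $(-1)$-type, not a $(-2)$-type, so the $\sm{}{}$-closure lemma does not literally apply. You need the extra remark that when $A$ and $B$ are both contractible every map $A\to B$ is an equivalence, so $\isequiv(f)$ is an \emph{inhabited} mere proposition and hence contractible. With that one sentence added the argument goes through uniformly. (Alternatively, handle $n=-2$ separately by observing directly that any two contractible types are equivalent and that this equivalence is unique, which is immediate.)
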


Using higher inductive types, it is possible to implement a left adjoint to the
inclusion $\typele{n}\to\type$. This left adjoint is called the \emph{$n$-truncation}; see~\cite{TheBook}. For
the purpose of this paper, we shall only be concerned with the universal
property for $n$-truncation.

\begin{lem}[A slight extension of Theorem 7.3.2 in \cite{TheBook}, see also
Theorem 7.7.7]
For any type $A$ and any $n:\Nnegtwo$ there is an
$n$-truncated type $\trunc{n}{A}$ and
a function $|-|_n:A\to \trunc{n}{A}$ such that the function
\begin{equation*}
\lam{s}s\circ|-|_n:\prd{w:\trunc{n}{A}}P(w)\to\prd{a:A}P(|a|_n)
\end{equation*}
is an equivalence for every $P:\trunc{n}{A}\to \typele{n}$. 
\end{lem}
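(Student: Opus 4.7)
The plan is to construct $\trunc{n}{A}$ as a higher inductive type whose constructors make it the free $n$-truncated type on $A$, and then to derive the universal property by using the HIT's dependent elimination rule twice: once for existence and once for uniqueness.

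First, I would take $\trunc{n}{A}$ to be the HIT generated by a point constructor $|-|_n : A \to \trunc{n}{A}$ together with further higher constructors ensuring $n$-truncation — for instance the ``hub and spokes'' pattern adding a filler for every map $\Sn^{n+1} \to \trunc{n}{A}$, or equivalently a family of path constructors forcing the iterated identity types at level $n+2$ to be contractible. The details are standard and chosen so that the resulting HIT satisfies (a) $\istype{n}(\trunc{n}{A})$, and (b) the dependent elimination principle: for any $P : \trunc{n}{A} \to \type$ with a proof $\prd{w:\trunc{n}{A}} \istype{n}(P(w))$ and any $g : \prd{a:A} P(|a|_n)$, there is $s : \prd{w:\trunc{n}{A}} P(w)$ together with a homotopy $s(|a|_n) = g(a)$ for all $a:A$.

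Second, given $P : \trunc{n}{A} \to \typele{n}$, its second projection supplies the required truncation hypothesis for each fiber. Thus the dependent elimination principle above produces, for every $g : \prd{a:A} P(|a|_n)$, a section $s : \prd{w} P(w)$ with $(s \circ |-|_n) \htpy g$. By function extensionality this exhibits a right-homotopy-inverse to the map $\lam{s} s \circ |-|_n$, so that map is split surjective up to homotopy.

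Third, to upgrade split surjectivity to an equivalence, I would prove uniqueness: any two sections $s, s' : \prd{w} P(w)$ satisfying $s \circ |-|_n \htpy s' \circ |-|_n$ are themselves homotopic. To this end define $Q(w) \defeq (\id{s(w)}{s'(w)})$. By \autoref{lem:ntype_is_sntype}, each $Q(w)$ is again an $n$-type because $P(w)$ is, so $Q : \trunc{n}{A} \to \typele{n}$. The hypothesis provides a term of $\prd{a:A} Q(|a|_n)$, and a second invocation of the dependent elimination principle yields $\prd{w} Q(w)$, i.e.\ $s \htpy s'$. Function extensionality then gives $s = s'$, showing that each homotopy fiber of $\lam{s} s \circ |-|_n$ is inhabited and contractible, hence the map is an equivalence.

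The main obstacle is step one: justifying that the HIT genuinely carries a dependent eliminator into all $n$-truncated families (rather than only a non-dependent recursor into $n$-types, which is what Theorem 7.3.2 of \cite{TheBook} directly gives). Once that dependent rule is in hand, steps two and three are formal applications of it combined with the closure of $n$-types under identity types and function extensionality.
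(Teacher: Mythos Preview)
The paper does not actually prove this lemma: it is stated with references to Theorems~7.3.2 and~7.7.7 of \cite{TheBook} and then used as a black box. Your proposal is the standard argument (essentially the one behind those references) and is correct in outline.

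One small slip: in step three you invoke \autoref{lem:ntype_is_sntype} to conclude that $Q(w)\defeq(\id{s(w)}{s'(w)})$ is an $n$-type, but that lemma is about the truncation level of $\istype{n}(-)$ and of $\typele{n}$, not about identity types. The fact you need is the preceding lemma in the paper (Theorem~7.1.7 in \cite{TheBook}): an $n$-type is also an $(n+1)$-type, hence its identity types are $n$-types. With that citation fixed, your argument goes through: the dependent eliminator supplies a section of precomposition, and a second application to the pointwise identity family shows the section is also a retraction, so precomposition is bi-invertible and hence an equivalence.

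Your closing caveat is well placed. The genuine content lies in knowing that the hub-and-spokes HIT comes equipped with a \emph{dependent} eliminator into families of $n$-types; once that is granted, your steps two and three are formal.
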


The $(-2)$-truncation of a type $A$ is contractible and the $(-1)$-truncation
identifies all elements with each other. Since $(-1)$-truncation is so common, we will often 
omit the subscript and write $\brck{\blank}$ instead of $\trunc{-1}{\blank}$. 
In general, $n$-truncation maps to \typele{n}. Therefore we have for any type $A$ and any $x,y:A$ that the type
$\id[\trunc{n+1}{A}]{|x|_{n+1}}{|y|_{n+1}}$ has all the structure above truncation level $n$ identified.
More precisely, we get:

\begin{lem}[See Theorem 7.3.12 in \cite{TheBook}]\label{trunc-idtypes}
For any type $A$, any $x,y:A$ and any $n:\Nnegtwo$,
there is an equivalence
\begin{equation*}
\eqv{(\id[\trunc{n+1}{A}]{\tproj{n+1}{x}}{\tproj{n+1}{y}})}{\trunc{n}{\id[A]{x}{y}}}
\end{equation*}
\end{lem}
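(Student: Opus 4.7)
My plan is the standard encode–decode proof, organised so that the bulk of the work is done by establishing contractibility of a suitable total space.

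First, I would fix $x:A$ and define a family $C_x : \trunc{n+1}{A}\to\typele{n}$ by recursion on the $(n+1)$-truncation, sending $|y|_{n+1}\mapsto \trunc{n}{\id[A]{x}{y}}$. This recursion is justified because $\typele{n}$ is itself $(n+1)$-truncated by \autoref{lem:ntype_is_sntype}. Next, by path induction on $p:\id[\trunc{n+1}{A}]{|x|_{n+1}}{u}$, I define a dependent map
\begin{equation*}
e_x : \prd{u:\trunc{n+1}{A}} \bigl( \id[\trunc{n+1}{A}]{|x|_{n+1}}{u}\bigr) \to C_x(u),
\end{equation*}
sending $\refl{|x|_{n+1}}$ to $|\refl{x}|_n$. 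Instantiating at $u\jdeq |y|_{n+1}$ yields the candidate encoding
$e_{x,y} : \bigl(\id[\trunc{n+1}{A}]{|x|_{n+1}}{|y|_{n+1}}\bigr)\to \trunc{n}{\id[A]{x}{y}}$.

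The heart of the proof is then showing that the total space $T_x\defeq \sm{u:\trunc{n+1}{A}} C_x(u)$ is contractible, with centre $(|x|_{n+1},|\refl{x}|_n)$. For uniqueness, given $(u,c):T_x$ I need to produce a path $(|x|_{n+1},|\refl{x}|_n)=(u,c)$; since $T_x$ is $(n+1)$-truncated (as a $\Sigma$-type of an $(n+1)$-truncated base and $n$-truncated, hence $(n+1)$-truncated, fibres) the goal of contractibility is a mere proposition, so I may induct on $u$. Writing $u\jdeq |a|_{n+1}$ reduces $c$ to an element of $\trunc{n}{\id[A]{x}{a}}$; the goal at this stage is an identity type in an $(n+1)$-truncated type, so is $n$-truncated, allowing induction on $c$. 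Having $c\jdeq |p|_n$ for $p:\id[A]{x}{a}$, ordinary path induction on $p$ discharges the goal.

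Finally, since $\sm{u:\trunc{n+1}{A}}(|x|_{n+1}=u)$ is contractible by the usual based-path-space argument and $T_x$ has just been shown contractible, the total map induced by $e_x$ is an equivalence between contractible types, and hence a fibrewise equivalence (by Theorem~4.7.7 of \cite{TheBook}, already cited in the proof of \autoref{thm:h-set-refrel-in-paths-sets}). Fibrewise, this is exactly $e_{x,y}$, giving the desired equivalence.

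The main obstacle is administrative rather than conceptual: one must marshal the truncation inductions in the contractibility argument in the right order, checking at each stage that the current goal has the correct truncation level to permit the next induction. Once $C_x$ is defined and the $(n+1)$-truncation of $\typele{n}$ is in hand, everything else is routine encode–decode book-keeping.
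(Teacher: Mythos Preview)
The paper does not prove this lemma itself but cites Theorem~7.3.12 of \cite{TheBook}, whose proof is essentially the encode--decode argument you give; your proposal is correct. The only imprecision is that the licence to induct on $u$ comes not from ``contractibility being a mere proposition'' but from the motive $u\mapsto\prd{c:C_x(u)}\bigl((|x|_{n+1},|\refl{x}|_n)=(u,c)\bigr)$ landing in $n$-types (as identity types of the $(n+1)$-type $T_x$, closed under $\Pi$), hence in $(n+1)$-types---a fact you correctly invoke one sentence later to justify the induction on $c$.
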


Using the $(-1)$-truncation $\brck{\blank}$, we can fully implement propositions as
$(-1)$-types. 

\begin{defn}
For a type $A$ and (dependent) mere propositions $P$ and $Q$
we define
\begin{align*}
\top       &\defeq\unit         & P \Rightarrow Q      &\defeq P \to Q \\
\bot       &\defeq\emptyt       & \neg P     &\defeq P\to\emptyt  \\
P \land Q  &\defeq P\times Q    & \fall{x : A} P(x)    &\defeq \prd{x : A} P(x) \\
P \lor Q   &\defeq \brck{P + Q} & \exis{x : A} P(x)    &\defeq \sbrck{\sm{x : A} P(x)}\\
\end{align*}
\end{defn}

Recall that bi-implication of $(-1)$-types implies equivalence of those 
$(-1)$-types, which is equivalent by univalence to identity. 
Notice also that although the product of mere
propositions is again a mere proposition, this is not the case for
dependent sums. This is the reason why we needed $(-1)$-truncations to implement
propositions as mere propositions.

\subsection{Surjective and injective functions}
The $n$-truncations are examples of a more general phenomenon called modalities
and a large part of the theory of truncations generalizes to arbitrary
modalities, as we will show in a forthcoming paper. 
A very first approximation is available at~\cite{RijkeSpitters:Factorization}.

\begin{defn}
A function $f:A\to B$ is said to be \emph{$n$-connected} if there is a term of type
\begin{equation*}
\prd{b:B}\iscontr(\strunc{n}{\hfib{f}{B}})
\end{equation*}
\end{defn} 

\begin{defn}
A function $f:A\to B$ is said to be \emph{$n$-truncated} if there is a term of type
\begin{equation*}
\prd{b:B}\istype{n}(\hfib{f}{b})
\end{equation*}
\end{defn}

The main result about the classes $n$-connected and $n$-truncated functions is that they
describe a stable orthogonal factorization system. Every function factors
uniquely as an $n$-connected function followed by an $n$-truncated function, see
Theorem 7.6.6 in \cite{TheBook} for the precise statement. The unique
factorization goes through the $n$-image:

\begin{defn}
Let $f:A\to B$ be a function. We define the \emph{$n$-image of $f$} to be the type
\begin{equation*}
\im_n(f) \defeq \sm{b:B}\strunc{n}{\hfib{f}{b}}
\end{equation*}
\end{defn}

In the present article we are mostly interested in the $(-1)$-connected and the
$(-1)$-truncated maps, which give factorization of functions through their
$(-1)$-image. We will denote the $(-1)$-image of a function $f$ simply by
$\im(f)$ and call it the \emph{image of $f$}. Also, it is more customary to talk
about surjective and injective functions instead of $(-1)$-connected and $(-1)$-truncated
functions. We make the following definitions of surjectivity and injectivity,
which are equivalent to the definitions of $(-1)$-connectedness and
$(-1)$-truncatedness respectively.

\begin{defn}
A function $f:A\to B$ is said to be \emph{surjective} if there is a term of type
\begin{equation*}
\tfsurj(f)\defeq\prd{b:B}\sbrck{\hfib{f}{b}}.
\end{equation*}
\end{defn}

\begin{defn}
A function $f:A\to B$ is said to be \emph{injective} if there is a term of type
\begin{equation*}
\tfinj(f)\defeq\prd{a:A}\iscontr(\hfib{f}{f(a)}).
\end{equation*}
\end{defn}

Below, we give the factorization of any function, but we do not go into the
details of the uniqueness of such a factorization.

\begin{defn}\label{defn:im-fact}
Let $f:A\to B$ be a function. Define
the
functions $\tilde{f}:A\to\im(f)$ and $i_f:\im(f)
\to B$ by
\begin{align*}
\tilde{f} & \defeq  \lam{a}\pairr{f(a),\sbproj{(a,\refl{f(a)})}}\\
i_f & \defeq  \proj1.
\end{align*}
\end{defn}

\begin{lem}
Let $f:A\to B$ be a function. Then $\tilde{f}$ is surjective and $i_f$ 
is injective.
\end{lem}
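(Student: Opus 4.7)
The plan is to handle the two claims separately, observing first that both $\tfsurj(\tilde{f})$ and $\tfinj(i_f)$ are mere propositions, so one may freely apply the truncation eliminator for $\sbrck{\cdot}$ whenever the goal to be shown is itself propositional.

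For the surjectivity of $\tilde{f}$, I fix an element $c : \im(f)$, unfold it as $(y,u)$ with $y:B$ and $u:\sbrck{\hfib{f}{y}}$, and must inhabit the mere proposition $\sbrck{\hfib{\tilde f}{(y,u)}}$. Since the goal is propositional, I eliminate $u$ and work with an honest $(a,p):\hfib{f}{y}$, that is, $p:\id{f(a)}{y}$. I then take $a$ itself as the candidate preimage: by definition $\tilde{f}(a) \jdeq (f(a),|(a,\refl{f(a)})|)$, and by the standard characterization of paths in a $\Sigma$-type (Theorem 2.7.2 of \cite{TheBook}) identifying this with $(y,u)$ reduces to providing the path $p:\id{f(a)}{y}$ together with a dependent path of second components. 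The latter lives in the mere proposition $\sbrck{\hfib{f}{y}}$ after transport along $p$, so it is obtainable for free.

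For the injectivity of $i_f$, I fix $c \jdeq (y,u):\im(f)$, so that $i_f(c) \jdeq y$, and must show $\hfib{i_f}{y}$ is contractible. Unfolding $z:\im(f)$ as a pair and swapping the two innermost $\Sigma$s yields
\[
\hfib{i_f}{y} \;\simeq\; \sm{y':B}\sm{p:\id{y'}{y}}\sbrck{\hfib{f}{y'}}.
\]
The outer pair $\sm{y':B}\sm{p:\id{y'}{y}}$ is a based path space, contractible with centre $(y,\refl{y})$, so transporting along the unique path from $(y',p)$ to $(y,\refl{y})$ reduces the whole expression to $\sbrck{\hfib{f}{y}}$. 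This is a mere proposition by construction and is inhabited by $u$, hence contractible, as required. The only mild subtlety is keeping track of which direction to apply Theorem 2.7.2 in and which fibration to transport along; no genuine obstacle arises.
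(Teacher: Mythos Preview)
Your proof is correct. The paper itself states this lemma without proof, treating it as an immediate consequence of the definitions of $\tilde f$, $i_f$, and the image factorisation; your argument spells out exactly the standard verification one would give, so there is nothing to compare against.
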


The injective functions are the monomorphisms of $\set$, which we can also
define via a pullback diagram.

\begin{defn}\label{pb}
Let $f:A\to X$ and $g:B\to X$ be functions. We define the homotopy pullback of $f$ and
$g$ to be
\begin{equation*}
A\times_X B\defeq \sm{a:A}{b:B}(\id[X]{f(a)}{g(b)})
\end{equation*}
and we define $\pi_1:A\times_X B\to A$ and $\pi_2:A\times_X B\to B$ to be the
projections.
\end{defn}

We have the following characterization of $(n+1)$-truncated functions which 
appears in \cite{rezk2010toposes} in the setting of model toposes, 
but not in \cite{TheBook}.

\begin{lem}
A function $f:A\to B$ is $(n+1)$-truncated if and only if the function
\begin{equation*}
\lam{a}\pairr{a,a,\refl{f(a)}}:A\to A\times_B A
\end{equation*}
is $n$-truncated.
\end{lem}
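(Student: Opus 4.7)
\medskip

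The plan is to prove both directions by computing the fibers of the diagonal map and relating them to the action of $f$ on paths. The key auxiliary observation is that for $a,a':A$ and any $p:\id[B]{f(a)}{f(a')}$, the homotopy fiber of
\[
\apfunc{f}:(\id[A]{a}{a'})\to(\id[B]{f(a)}{f(a')})
\]
at $p$ measures exactly the failure of the diagonal to be injective ``at $(a,a',p)$''. So the central lemma I will prove first is that there is an equivalence
\[
\hfib{\Delta_f}{(x,y,p)}\;\simeq\;\hfib{\apfunc{f}}{p},
\]
where $\Delta_f\defeq\lam{a}\pairr{a,a,\refl{f(a)}}$. To establish it, I unfold the definition of $\hfib{\Delta_f}{(x,y,p)}$ as a $\sum$-type, apply Theorem~2.7.2 of~\cite{TheBook} to rewrite an identification in the pullback $A\times_B A$ as a triple consisting of $q_1:\id{a}{x}$, $q_2:\id{a}{y}$, and a coherence between $\refl{f(a)}$ and $p$ under transport by $q_1,q_2$; then I contract the factor $\sm{a:A}(\id{a}{x})$ to a point, reducing the remaining data to a pair $\pairr{s,\alpha}$ with $s:\id{x}{y}$ and $\alpha:\id{\map{\apfunc{f}}{s}}{p}$, which is by definition $\hfib{\apfunc{f}}{p}$.

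Given this lemma, the $n$-truncatedness of $\Delta_f$ is logically equivalent to the statement that $\apfunc{f}$ is $n$-truncated for all $x,y:A$, since the triples $(x,y,p):A\times_B A$ range exactly over all possible source types and points of $\apfunc{f}$.

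On the other side, I need to reformulate $(n+1)$-truncatedness of $f$ in the same terms. Using that a type is $(n+1)$-truncated iff all its identity types are $n$-truncated (Theorem~7.1.7 of~\cite{TheBook}), the map $f$ is $(n+1)$-truncated iff every fiber $\hfib{f}{b}$ has $n$-truncated identity types. A second application of Theorem~2.7.2 of~\cite{TheBook} shows that for $(x,q),(y,r):\hfib{f}{b}$ the type $\id{(x,q)}{(y,r)}$ is equivalent to $\sm{s:\id{x}{y}}\id{\map{\apfunc{f}}{s}}{\ct{q}{\opp{r}}}$, which is again a fiber of $\apfunc{f}$. So every path space in every fiber of $f$ is of the form $\hfib{\apfunc{f}}{p}$ for some $x,y,p$, and conversely every such fiber appears (taking $q\defeq p$ and $r\defeq\refl{}$). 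Hence the two conditions on $\apfunc{f}$ line up, and the equivalence stated in the lemma follows.

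The main technical obstacle is the bookkeeping in the two applications of Theorem~2.7.2 of~\cite{TheBook}: in both cases one must carefully identify the transport in the relevant path-family with $\apfunc{f}$ acting on the chosen path, and one must verify that the contraction of $\sm{a:A}(\id{a}{x})$ is compatible with the coherence datum. Once these identifications are in place the rest is purely formal, using only \autoref{ntype-closed-begin} (closure under equivalence) and the truncation-level characterizations already recalled in the preliminaries.
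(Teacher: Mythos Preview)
Your proof is correct and follows essentially the same route as the paper's. The paper computes the chain of equivalences
\[
\hfib{\Delta_f}{\pairr{x,y,p}} \;\eqvsym\; \sm{\beta:\id{x}{y}}\id{\ap{f}{\beta}}{p} \;\eqvsym\; \id[\hfib{f}{f(x)}]{\pairr{x,\refl{f(x)}}}{\pairr{y,\opp{p}}},
\]
whereas you stop at the middle term, name it as $\hfib{\apfunc{f}}{p}$, and run the second half of the computation separately on the identity types of $\hfib{f}{b}$; this is the same calculation, organized a bit more modularly.
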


\begin{proof}
Let $\pairr{x,y,p}:\sm{x,y:A}\id{f(x)}{f(y)}$. Then we have the equivalences
\begin{align*}
\hfib{\lam{a}\pairr{a,a,\refl{f(a)}}}{\pairr{x,y,p}} & \eqvsym \sm{a:A}\id{\pairr{a,a,\refl{f(a)}}}{\pairr{x,y,p}}\\
& \eqvsym \sm{a:A}{\alpha:\id{a}{x}}{\beta:\id{a}{y}}\id{\ct{\opp{\ap{f}{\alpha}}}{\ap{f}{\beta}}}{p}\\
& \eqvsym \sm{\beta:\id{x}{y}}\id{\ap{f}{\beta}}{p}\\
& \eqvsym \sm{\beta:\id{x}{y}}\id{\ct{\opp{\ap{f}{\beta}}}{\refl{f(x)}}}{\opp{p}}\\
& \eqvsym \sm{\beta:\id{x}{y}}\id{\trans{\beta}{\refl{f(x)}}}{\opp{p}}\\
& \eqvsym \id[\hfib{f}{f(x)}]{\pairr{x,\refl{f(x)}}}{\pairr{y,\opp{p}}}.
\end{align*}
The latter type is an $n$-type if and only if $\hfib{f}{f(x)}$ is an $(n+1)$-type.
Thus, we see that $\lam{a}\pairr{a,a,\refl{f(a)}}$ is $n$-truncated if and only if
$\hfib{f}{f(x)}$ is an $(n+1)$-type for each $x:A$. Note that each $\hfib{f}{f(x)}$
is an $(n+1)$-type if and only if each $\hfib{f}{b}$ is an $(n+1)$-type.
\end{proof}

In particular, a function is injective if and only if the function $A\to
A\times_B A$ is an equivalence. In other words, a function is injective 
precisely when it is a monomorphism, i.e.~when the diagram
\begin{equation*}
\begin{tikzcd}
A \rar{\idfunc[A]} \dar[swap]{\idfunc[A]} & A \dar{f} \\
A \rar[swap]{f} & B
\end{tikzcd}
\end{equation*}
is a pullback diagram.

Let us make the
 verifications of two of the ingredients of a
predicative topos. The first is that sums are disjoint. 

\begin{lem}[See Theorem 2.12.5 in \cite{TheBook}]\label{lem:sumsdisjount}
For any two types $X$ and $Y$ we have the equivalences
\begin{align*}
(\id[X+Y]{\inl(x)}{\inl(x')}) & \eqv{}{(\id[X]{x}{x'})}\\
(\id[X+Y]{\inl(x)}{\inr(y)}) & \eqv{}{\emptyt}\\
(\id[X+Y]{\inr(y)}{\inr(y')}) & \eqv{}{(\id[Y]{y}{y'})}
\end{align*}
Consequently, the inclusions $\inl:X\to X+Y$ and 
$\inr:Y\to X+Y$ are monomorphisms and the diagram
\begin{equation*}
\begin{tikzcd}
\emptyt \rar \dar & X \dar{\inl} \\
Y \rar[swap]{\inr} & X + Y
\end{tikzcd}
\end{equation*}
is a pullback diagram.
\end{lem}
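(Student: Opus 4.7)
The plan is to prove the three equivalences by the standard encode-decode method and then derive the monomorphism and pullback claims as corollaries. First, I would define a family $\code : X+Y \to X+Y \to \type$ by double case analysis on the sum, setting $\code(\inl(x),\inl(x')) \defeq (\id[X]{x}{x'})$, $\code(\inr(y),\inr(y')) \defeq (\id[Y]{y}{y'})$, and $\code(\inl(x),\inr(y)) \defeq \code(\inr(y),\inl(x)) \defeq \emptyt$. The idea is that $\code(u,v)$ should be equivalent to $\id[X+Y]{u}{v}$.

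Next I would construct an encoding function $\encode : \prd{u,v:X+Y}(\id[X+Y]{u}{v}) \to \code(u,v)$ by path induction: it suffices to produce a basepoint $r(u) : \code(u,u)$ for each $u$, and case analysis on $u$ reduces this to $\refl{x} : \id[X]{x}{x}$ or $\refl{y} : \id[Y]{y}{y}$. In the other direction I would construct $\decode : \prd{u,v:X+Y}\code(u,v) \to (\id[X+Y]{u}{v})$ by case analysis on $u$ and $v$; in the two diagonal cases it is given by $\apfunc{\inl}$ and $\apfunc{\inr}$ respectively, and in the off-diagonal cases it is the ex falso map out of $\emptyt$. The two compositions are shown to be identity by further case analysis: $\decode \circ \encode$ is handled by path induction (where the relevant reflexivity paths reduce to $\ap{\inl}{\refl{x}} \jdeq \refl{\inl(x)}$ etc.), while $\encode \circ \decode$ is handled by case analysis on $u$ and $v$, with the off-diagonal cases being vacuous since $\emptyt$ has no inhabitants, and the diagonal cases being handled by a further path induction.

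Having established the three equivalences, the second equivalence shows $\id[X+Y]{\inl(x)}{\inr(y)}$ is empty for any $x,y$, so in particular it is uninhabited; the first equivalence together with the preceding lemma (a map is injective iff the diagonal map into its kernel pair is an equivalence) shows that $\inl$ is a monomorphism, since
\begin{equation*}
X \times_{X+Y} X \eqvsym \sm{x,x':X}(\id[X+Y]{\inl(x)}{\inl(x')}) \eqvsym \sm{x,x':X}(\id[X]{x}{x'})
\end{equation*}
is contractible on the diagonal, making $X \to X \times_{X+Y} X$ an equivalence; an analogous argument handles $\inr$. Finally, for the pullback square, we compute
\begin{equation*}
X \times_{X+Y} Y \jdeq \sm{x:X}{y:Y}(\id[X+Y]{\inl(x)}{\inr(y)}) \eqvsym \sm{x:X}{y:Y}\emptyt \eqvsym \emptyt,
\end{equation*}
and verify that the resulting equivalence identifies the projections with the (unique) maps out of $\emptyt$. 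The only subtle point is the equivalence proof itself; the rest is routine and follows the template of how the HoTT book treats characterizations of identity types for inductive types.
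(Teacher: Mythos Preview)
Your proposal is correct and follows the standard encode--decode argument, which is precisely what Theorem~2.12.5 of \cite{TheBook} does; the paper itself gives no proof of this lemma but simply cites that reference, so your write-up is in fact more detailed than what appears here. Your derivation of the ``consequently'' part (monomorphisms via the preceding kernel-pair characterization, and the pullback via $\sm{x:X}{y:Y}\emptyt \eqvsym \emptyt$) is also correct and is exactly the intended reading.
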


We also have the following general result, which has the consequence that
$\set $ is \emph{lextensive}.

\begin{thm}\label{prop:sums_stable}
Let $P:A\to\type$ be a family of types and let $f:(\sm{a:A}P(a))\to
B$ and $g:X\to B$ be functions.
Then there is an equivalence 
\begin{equation*}
\eqv{\big(\sm{a:A}(P(a)\times_B X)\big)}{\big(\sm{a:A}P(a)\big)\times_B X}
\end{equation*}
In particular, there is an equivalence
\begin{equation*}
\eqv{(A_0\times_B X)+(A_1\times_B X)}{(A_0+A_1)\times_B X}
\end{equation*}
for any three functions $f_0:A_0\to B$ and $f_1:A_1\to B$ and $g:X\to B$.
\end{thm}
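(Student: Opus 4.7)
The plan is to prove both equivalences by direct unfolding and reassociation of $\Sigma$-types. Unfolding the definition of the pullback (\autoref{pb}), and writing $f_a \defeq \lam{p:P(a)}f(a,p):P(a)\to B$ for the restriction of $f$ to the fibre over $a$, the fibre $P(a)\times_B X$ is the type $\sm{p:P(a)}\sm{x:X}\id[B]{f(a,p)}{g(x)}$. Therefore
\begin{equation*}
  \sm{a:A}\big(P(a)\times_B X\big)\;\jdeq\;\sm{a:A}{p:P(a)}{x:X}\id[B]{f(a,p)}{g(x)}.
\end{equation*}
On the other hand, again by \autoref{pb},
\begin{equation*}
  \big(\sm{a:A}P(a)\big)\times_B X \;\jdeq\; \sm{w:\sm{a:A}P(a)}{x:X}\id[B]{f(w)}{g(x)},
\end{equation*}
and unpacking the outer pair $w$ as $\pairr{a,p}$ produces exactly the same nested $\Sigma$-type up to the associativity equivalence $(\sm{a:A}\sm{p:P(a)}Q(a,p))\simeqvsym\sm{w:\sm{a:A}P(a)}Q(\fst w,\snd w)$, which is built from the usual universal property of $\Sigma$ (together with $\uppt$ for dependent pairs). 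Packing this up gives the desired equivalence, whose forward map is $\lam{\pairr{a,\pairr{p,\pairr{x,q}}}}\pairr{\pairr{a,p},\pairr{x,q}}$ and whose inverse is the obvious unpairing.

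For the particular instance, take $A\defeq\bool$ and let $P:\bool\to\type$ be the family with $P(\bfalse)\defeq A_0$ and $P(\btrue)\defeq A_1$. The standard equivalence $\eqv{(\sm{b:\bool}P(b))}{P(\bfalse)+P(\btrue)}$ (an easy $\bool$-induction, see the discussion around coproducts in \cite{TheBook}) identifies $\sm{b:\bool}P(b)$ with $A_0+A_1$, and compatibly with $f$ viewed as the copairing of $f_0$ and $f_1$. Applying the first part then rewrites $(A_0+A_1)\times_B X$ as $\sm{b:\bool}(P(b)\times_B X)$, which by the same $\bool$-induction is equivalent to $(A_0\times_B X)+(A_1\times_B X)$.

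There is no real obstacle: the only thing to be slightly careful about is that the pullback on the left-hand side of the first equivalence is to be read with respect to the restriction $f_a$, so that the identity type $\id[B]{f(a,p)}{g(x)}$ lines up with $\id[B]{f\pairr{a,p}}{g(x)}$ after unpacking the outer $\Sigma$. Once this bookkeeping is done, both equivalences are entirely formal and amount to the associativity of dependent sums, which holds judgementally in the type theory we are using.
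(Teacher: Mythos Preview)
Your argument is correct and matches the paper's own proof: unfold the definition of the pullback and use the associativity equivalence for iterated $\Sigma$-types; the paper records exactly the same chain $\sm{a:A}(P(a)\times_B X)\jdeq\sm{a:A}{u:P(a)}{x:X}(\id{f(a,u)}{g(x)})\eqvsym\sm{w:\sm{a:A}P(a)}{x:X}(\id{f(w)}{g(x)})\jdeq(\sm{a:A}P(a))\times_B X$, and leaves the ``in particular'' implicit, whereas you spell it out via $A\defeq\bool$. One small correction: your closing remark that $\Sigma$-associativity ``holds judgementally'' is not accurate in this type theory---it is a (canonical) equivalence, not a definitional equality; indeed the paper marks that middle step with $\eqvsym$ rather than $\jdeq$.
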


\begin{proof}
Note that we have the equivalences
\begin{align*}
\big(\sm{a:A}(P(a)\times_B X)\big) 
& \jdeq \sm{a:A}{u:P(a)}{x:X}(\id[B]{f(a,u)}{g(x)})\\
& \eqv{}{\sm{w:\sm{a:A}P(a)}{x:X}(\id[B]{f(w)}{g(x)})}\\
& \jdeq (\sm{a:A}P(a))\times_B X\qedhere
\end{align*}
\end{proof}

\subsection{Homotopy colimits}
In contrast with limits, 
general colimits such as quotients are not provided by Martin-L\"of type 
theory. Thus the category of setoids -- the left exact completion of the category
of types -- was considered to work around this deficit. In the univalent foundations,
quotients can be introduced as 
higher inductive types. We present only the results that are essential in
the context of sets, a more thorough discussion
about higher inductive types can 
be found either in chapter 6 of \cite{TheBook} or in
\cite{RijkeSpitters:colims}.

\begin{defn}
A \emph{(directed) graph} $\Gamma$ is a pair $\pairr{\pts{\Gamma},\edg{\Gamma}}$
consisting of a type $\pts{\Gamma}$ of \emph{points} and a binary relation $\edg{\Gamma}:\pts{\Gamma}\to\pts{\Gamma}\to \prop $
of \emph{edges}.
\end{defn}

\begin{defn}\label{colim}
Let $\Gamma$ be a graph. We define $\tfcolim(\Gamma)$ to be the higher inductive
type with basic constructors
\begin{align*}
\pts{\alpha} & : \pts{\Gamma}\to\tfcolim(\Gamma)\\
\edg{\alpha} & : \prd{i,j:\pts{\Gamma}}\edg{\Gamma}(i,j)\to\id{\pts{\alpha}(i)}{\pts{\alpha}(j)}
\end{align*}
The induction principle for $\tfcolim(\Gamma)$ is that for any family $P:\tfcolim(\Gamma)\to\type$,
if there are
\begin{align*}
\pts{H} & : \prd{i:\pts{\Gamma}}P(\pts{\alpha}(i))\\
\edg{H} & : \prd{i,j:\pts{\Gamma}}{q:\edg{\Gamma}(i,j)}\id{\trans{\edg{\alpha}(q)}{\pts{H}(i)}}{\pts{H}(j)}
\end{align*}
then there is a dependent function $f:\prd{w:\tfcolim(\Gamma)}P(w)$ with
\begin{align*}
f(\pts{\alpha}(i)) & \defeq \pts{H}(i) & & \text{for }i:\pts{\Gamma}\\
\apd{f}{\edg{\alpha}(q)} & := \edg{H}(i,j,q) & & \text{for }i,j:\pts{\Gamma}\text{ and }q:\edg{\Gamma}(i,j).
\end{align*}
\end{defn}

These colimits are higher inductive types of the kind that are presented in section
6.12 in \cite{TheBook}, using a binary relation over the type $\pts{\Gamma}$ rather
than a pair of functions into $\pts{\Gamma}$.

Using these higher inductive types, all the homotopy colimits that appear in \cite{TheBook} can be constructed.
In particular, we have the pushout of $f:A\to B$ and $g:A\to C$ by considering the
graph $\Gamma$ with
\begin{align*}
\pts{\Gamma} & \defeq B+C\\
\edg{\Gamma}(i,j) & \defeq \hfib{\varphi}{\pairr{i,j}}
\end{align*}
where we take
\begin{equation*}
\varphi \defeq \lam{a}\pairr{\inl(f(a)),\inr(g(a))} : A\to (B+C)^2.
\end{equation*}
To see what $\hfib{\varphi}{\blank}$ is, note that 
\begin{equation*}
\eqv
  {\hfib{\varphi}{\pairr{\inl(b),\inr(c)}}}
  {\sm{a:A}(\id{f(a)}{b})\times(\id{g(a)}{c})}
\end{equation*}
and that $\hfib{\varphi}{\blank}$ is empty for other combinations of $\inl$ and $\inr$.
The pushout is equivalently described with the basic constructors
\begin{align*}
\inl & : B\to B+_A C\\
\inr & : C\to B+_A C\\
\glue & : \prd{a:A}\id{\inl(f(a))}{\inr(g(a))}.
\end{align*}

We note that using higher inductive types and univalence for propositions, 
it is possible to give
a new proof of the fact that the axiom of choice, see Eq.~3.8.1 in \cite{TheBook},
implies the law of excluded middle, see Eq.~3.4.1 in \cite{TheBook}. Although
we will not use this fact in the proof that $\set$ forms a predicative topos,
the higher inductive type is an instance of the construction of quotients.

\begin{defn}
Suppose $P$ is a mere proposition. We define the auxilary binary relation $R_P:\bool
\to\bool\to\type$ by $R_P(\bfalse,\btrue)\defeq P$ and $R_P(b,b')\defeq\emptyt$ otherwise.
Define $\bool/P$ to be the type
\begin{equation*}
\tfcolim(\pairr{\bool,R_P}).
\end{equation*}
\end{defn}

Using \autoref{thm:h-set-refrel-in-paths-sets} it is not hard to see that
$\bool/P$ is a set, see Lemma 10.1.13 in \cite{TheBook}. The basic constructor
$\bool\to\bool/P$ is a surjective function, so we may use the axiom of choice
to obtain a section and use the decidability of equality in $\bool$ to
decide whether $P$ or $\neg P$ holds:

\begin{thm}[See Theorem 10.1.14 in \cite{TheBook}]\label{prop:1surj_to_surj_to_pem}
If all surjections between sets merely split, then the law of excluded middle follows.
\end{thm}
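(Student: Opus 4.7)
The plan is to fix a mere proposition $P$ and use the quotient $\bool/P$ together with the assumed splitting to decide $P$. Since $P \lor \neg P$ is itself a mere proposition (being a $(-1)$-truncation), it suffices to construct a term of this type assuming we have an actual section of the canonical surjection, not merely one.

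First I would observe that the constructor $\pts{\alpha} : \bool \to \bool/P$ is surjective. For any $x : \bool/P$, the proposition $\sbrck{\hfib{\pts{\alpha}}{x}}$ is a mere proposition, so by the induction principle of $\bool/P$ it suffices to produce elements of $\sbrck{\hfib{\pts{\alpha}}{\pts{\alpha}(b)}}$ for $b : \bool$, which are given by $\sbproj{\pairr{b,\refl{\pts{\alpha}(b)}}}$. By the hypothesis, $\pts{\alpha}$ merely splits; since the conclusion $P \lor \neg P$ is a mere proposition, I may destruct this truncation and work with an actual section $s : \bool/P \to \bool$ satisfying $\pts{\alpha}\circ s \htpy \idfunc$.

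Next I would evaluate $s$ at the two points $\pts{\alpha}(\bfalse)$ and $\pts{\alpha}(\btrue)$ and use decidable equality of $\bool$ to split into cases depending on whether $s(\pts{\alpha}(\bfalse)) = s(\pts{\alpha}(\btrue))$ or not. If the two values are unequal, then $\pts{\alpha}(\bfalse)$ and $\pts{\alpha}(\btrue)$ are unequal as well (because applying $\pts{\alpha}$ to a hypothetical equality and composing with the section identities would force the two elements of $\bool$ to agree); combined with the fact that $P$ would produce an edge, hence a path, between $\pts{\alpha}(\bfalse)$ and $\pts{\alpha}(\btrue)$ via $\edg{\alpha}$, this case yields $\neg P$.

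The interesting case is when $s(\pts{\alpha}(\bfalse)) = s(\pts{\alpha}(\btrue))$, so $\pts{\alpha}(\bfalse) = \pts{\alpha}(\btrue)$ in $\bool/P$. Here I must extract $P$ from this equality, and this is the main obstacle: I need a way to distinguish the two classes internally. To do so, I would apply propositional univalence to build a map $\code : \bool/P \to \prop$ by recursion, sending $\pts{\alpha}(\bfalse) \mapsto \unit$ and $\pts{\alpha}(\btrue) \mapsto P$; the required coherence on edges amounts to showing $\id[\prop]{\unit}{P}$ under the assumption $P$, which follows from propositional univalence since both sides are inhabited mere propositions. Transporting $\ttt : \code(\pts{\alpha}(\bfalse))$ along the assumed equality then yields an inhabitant of $P$, finishing the proof.
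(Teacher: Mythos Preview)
Your argument follows the same strategy as the paper's (and the HoTT book's), and the core steps are sound, but there is one genuine gap: you never verify that $\bool/P$ is a \emph{set}. The hypothesis you are invoking is that surjections \emph{between sets} merely split, so before you can apply it to $\pts{\alpha}:\bool\to\bool/P$ you must know that the codomain is a $0$-type. The paper handles this explicitly just before stating the theorem, appealing to \autoref{thm:h-set-refrel-in-paths-sets} (equivalently Lemma~10.1.13 of \cite{TheBook}): one exhibits a reflexive mere relation on $\bool/P$ that implies identity, from which sethood follows. Your $\code$ map is close to the ingredients needed for this, but as written it only gives a one-sided predicate, not the reflexive relation required; you should either extend it to a full encode--decode characterisation of the identity types of $\bool/P$, or simply cite the set-hood lemma before invoking the splitting hypothesis.

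Two smaller remarks. In the unequal case you write ``applying $\pts{\alpha}$ to a hypothetical equality'', but the function you actually want to apply is $s$: from a hypothetical $\id{\pts{\alpha}(\bfalse)}{\pts{\alpha}(\btrue)}$ one applies $s$ to obtain $\id{s(\pts{\alpha}(\bfalse))}{s(\pts{\alpha}(\btrue))}$, contradicting the case assumption; combined with the edge constructor this gives $\neg P$ as you say. And in the equal case, the passage from $\id{s(\pts{\alpha}(\bfalse))}{s(\pts{\alpha}(\btrue))}$ to $\id{\pts{\alpha}(\bfalse)}{\pts{\alpha}(\btrue)}$ is indeed by applying $\pts{\alpha}$ and using the section homotopy, so that step is fine; your $\code$ transport then correctly extracts $P$.
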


Because truncations are left-adjoints, we note that when $\Gamma$ is a graph,
then $\trunc{0}{\tfcolim(\Gamma)}$ is the set-colimit of $\Gamma$. To see this,
note that the truncation $\trunc{0}{\blank}$ restricts the universal property
of $\tfcolim(\Gamma)$ to only those cases where a family of sets over $\tfcolim(\Gamma)$
is considered.

We end the preliminaries with a discussion on how to take the coequalizer of
two functions $f,g:A\to B$ in the univalent category of sets. Its definition
as a higher inductive type is straightforward:

\begin{defn}
Let $f,g:A\to B$ be functions between sets. We define the set-coequalizer of $f$
and $g$ to be the type
$\coeq{B}{f}{g}\defeq\strunc{0}{\tfcolim(\pairr{B,\eq{f}{g}})}$, where $\eq{f}{g}$ is the family
defined by
\begin{equation*}
\eq{f}{g}(b,b')\defeq \sm{a:A}(\id{f(a)}{b})\times(\id{g(a)}{b'})
\end{equation*}
We denote the composite function $B\to \tfcolim(\pairr{B,\eq{f}{g}})\to \coeq{B}{f}{g}$
by $c_{f,g}$. A
\emph{regular epimorphism} is a function between sets which is (homotopic to) a set-coequalizer.
\end{defn}

The following lemma explains that the coequalizer of $f$ and $g$ has indeed the
right universal property. This is a general phenomenon; 
truncated colimits behave as expected.

\begin{lem}[See Lemma 10.1.4 in \cite{TheBook}]\label{coeq}
Let $f,g:A\to B$ be functions between sets $A$ and $B$. The 
{set-co}equalizer $c_{f,g}:B\to \coeq{B}{f}{g}$ satisfies the universal property
\begin{equation*}
\prd{C:\set}{h:B\to C}{H:h\circ f\htpy h\circ g}
\iscontr(\sm{k:\coeq{B}{f}{g}\to C}k\circ c_{f,g}\htpy h).
\end{equation*}
\end{lem}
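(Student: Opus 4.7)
The plan is to reduce the claim to two successive universal properties: that of the $0$-truncation and that of the graph colimit from \autoref{colim}. Since $C$ is a set, the universal property of $\trunc{0}{\blank}$ gives that precomposition with the truncation map is an equivalence
\begin{equation*}
(\coeq{B}{f}{g}\to C)\;\eqvsym\;(\tfcolim(\pairr{B,\eq{f}{g}})\to C).
\end{equation*}
Thus it suffices to establish the corresponding universal property with $\tfcolim(\pairr{B,\eq{f}{g}})$ in place of $\coeq{B}{f}{g}$, namely that the type of pairs $\pairr{k,K}$ with $k:\tfcolim(\pairr{B,\eq{f}{g}})\to C$ and $K:k\circ\pts\alpha\htpy h$ is contractible whenever $h:B\to C$ satisfies $h\circ f\htpy h\circ g$.

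Next, I would invoke the (non-dependent) universal property of $\tfcolim$, which is the specialization of \autoref{colim} where the family $P$ is constant at $C$. Together with function extensionality, this exhibits an equivalence between functions $k:\tfcolim(\pairr{B,\eq{f}{g}})\to C$ and pairs $\pairr{\pts H,\edg H}$ where $\pts H:B\to C$ and
\begin{equation*}
\edg H:\prd{b,b':B}{q:\eq{f}{g}(b,b')}\id{\pts H(b)}{\pts H(b')},
\end{equation*}
the equivalence sending $k$ to $\pairr{k\circ\pts\alpha,\lam{b,b',q}\ap{k}{\edg\alpha(q)}}$. Under this equivalence, asking that $k\circ\pts\alpha\htpy h$ amounts to $\pts H=h$ in the function type $B\to C$, so the contractibility we want is equivalent to contractibility of the type of $\edg H$ over a fixed $\pts H\defeq h$.

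The main computational step is to show that the type
\begin{equation*}
\prd{b,b':B}{q:\eq{f}{g}(b,b')}\id{h(b)}{h(b')}
\end{equation*}
is, over a fixed $h$, equivalent to the type of homotopies $h\circ f\htpy h\circ g$, and that this equivalence identifies the given datum $H:h\circ f\htpy h\circ g$ with the unique element. Unfolding $\eq{f}{g}(b,b')\jdeq\sm{a:A}(\id{f(a)}{b})\times(\id{g(a)}{b'})$, currying, and applying path induction twice (eliminating the two paths $p:\id{f(a)}{b}$ and $p':\id{g(a)}{b'}$) contracts the dependency on $b$ and $b'$, leaving exactly
\begin{equation*}
\prd{a:A}\id{h(f(a))}{h(g(a))},
\end{equation*}
which is $h\circ f\htpy h\circ g$. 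The expected obstacle is bookkeeping: carefully tracking, under these two path inductions, how the constructor $\edg\alpha$ acts, so that the canonical pair arising from the hypothesis $H$ is indeed (homotopic to, hence by function extensionality and the fact that $C$ is a set, equal to) the unique element. Since $C$ is a set, the types of identifications involved are mere propositions, which collapses all higher coherences and turns the equivalence into a straightforward contractibility statement, yielding the desired $\iscontr$.
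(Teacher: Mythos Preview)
The paper does not actually prove this lemma: it is stated with a citation to Lemma~10.1.4 of \cite{TheBook}, and the surrounding text only remarks that ``truncated colimits behave as expected.'' Your proposal correctly supplies the omitted argument, and the route you take---reduce via the universal property of $\trunc{0}{\blank}$ to maps out of the untruncated colimit, identify those with cocone data via the recursion principle of \autoref{colim}, contract the singleton $\sm{\pts H}(\pts H=h)$, and then observe that the residual edge-data type is equivalent to $h\circ f\htpy h\circ g$, which is a mere proposition (since $C$ is a set) inhabited by $H$---is exactly the standard unfolding one would expect and matches the spirit of the book's proof.

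One small point worth tightening: when you say ``the contractibility we want is equivalent to contractibility of the type of $\edg H$ over a fixed $\pts H\defeq h$,'' make explicit that you are using function extensionality to turn $\pts H\htpy h$ into $\pts H=h$ and then contracting a based path space. And in the final step, state clearly that the equivalence with $h\circ f\htpy h\circ g$ does not by itself give contractibility; it is the combination of inhabitation (by $H$) and the fact that $C$ is a set (so the homotopy type is a mere proposition) that yields $\iscontr$. You gesture at this in your last sentence, but separating ``equivalent to $h\circ f\htpy h\circ g$'' from ``which is a proposition, and inhabited, hence contractible'' would make the logic airtight.
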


\section{$\catset$ is a \texorpdfstring{$\Pi$}{Π}$\mathsf{W}$-pretopos}\label{sec:quotient}
In this section we begin by verifying the principle of unique choice.
The importance of this result is not in the difficulty of its proof, 
but in the absence of the result in some other type theories.
In these type theories one introduces a separate sort of `propositions', 
which, however, are not necessarily identified with mere 
propositions. Such an approach may be more general, but less powerful. 
The principle of unique choice fails in the calculus of constructions~\cite{streicher1992independence}, in logic enriched type 
theory~\cite{aczel2002collection}, in minimal type 
theory~\cite{maietti2005toward} and in the category of prop-valued setoids 
in \Coq~\cite{Spiwack}. 
This principle does hold the model of total setoids using a propositions as types interpretation~\cite{hofmann1995extensional}.

We will show that in the presence of $(-1)$-truncation, $\catset$ becomes
a regular category. The natural candidate for coequalizer of the kernel pair 
of a function is the image of the function. Our proof that the image is indeed
the coequalizer is an application of the principle of unique choice. 
This work is reminiscent of the connections between 
$[\ ]$-types~\cite{AwodeyBauer2004}, or mono-types~\cite{MSC:359803}, and regular 
categories in an extensional setting.

To show that $\catset$ is exact provided that we have quotients, 
we need to show in addition that every equivalence relation is effective. 
In other words, given an equivalence
relation $R:A\to A\to\prop $, there is a coequalizer $c_R$ of the pair
$\pi_1,\pi_2:(\sm{x,y:A}R(x,y))\to A$.

We consider the pre-category $\eqrel$, which becomes a $1$-category by univalence.
The pre-category $\eqrel$ shares many properties of the pre-category $\mathsf{Std}$
of setoids, which is the exact completion of $\catset$. Using higher inductive
types and univalence, we will show that we have
a quotient functor $\eqrel \to\catset$ which is moreover left
adjoint to the inclusion $\catset\to\eqrel$.
This adjunction is in general not an equivalence; proving this requires 
the axiom of choice. With $\catset$ being exact, we will be ready to show that
$\catset$ forms a $\piw$-pretopos.

After having shown that $\catset$ is exact, we will show that $\type$ has an object
classifier. From this we will derive that $\catset$ has a subobject classifier.
This also shows that if we the resizing rule that $\prop$ is equivalent to
a type in $\set$, then $\catset$ actually becomes a topos.

\subsection{Regularity of the category of sets}
\begin{defn}
Suppose $P:A\to\type$ is a family of types over $A$. We define
\begin{align*}
\atMostOne(P) & \defeq   \prd{x,y:A}P(x)\to P(y)\to (\id{x}{y})\\
\uexis{x:A}P(x) & \defeq   (\exis{x:A}P(x))\times\atMostOne(P).
\end{align*}
\end{defn}

\begin{lem}\label{lem:atMostOne}
Suppose that $P:A\to\prop $. If there is an element $H:\atMostOne(P)$, then the type $\sm{x:A}P(x)$ is a mere proposition.
\end{lem}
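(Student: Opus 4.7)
The goal is to show that any two elements $(x,p), (y,q) : \sm{x:A}P(x)$ are equal. The plan is to reduce this to its components using the standard characterization of paths in a $\Sigma$-type (Theorem~2.7.2 of \cite{TheBook}), after which each component will follow from one of the two hypotheses.

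First I would fix arbitrary $(x,p)$ and $(y,q)$ in $\sm{x:A}P(x)$. By the cited theorem, an identification $\id{(x,p)}{(y,q)}$ is equivalent to a pair consisting of some $\alpha : \id{x}{y}$ together with a witness that $\id{\trans{\alpha}{p}}{q}$ in $P(y)$. So it suffices to construct such a pair.

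For the first component, I would simply apply the hypothesis and set $\alpha \defeq H(x,y,p,q) : \id{x}{y}$. For the second component, observe that $\trans{\alpha}{p}$ and $q$ are both elements of $P(y)$, and by assumption $P(y)$ is a mere proposition; hence any two of its elements are equal, giving the required identification $\id{\trans{\alpha}{p}}{q}$.

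There is no real obstacle here: the lemma is a direct combination of the $\Sigma$-path characterization with the fact that being a mere proposition is inherited from $P$ fiberwise. The only point worth emphasizing is that we genuinely use that each $P(y)$ is a $(-1)$-type, not merely that $P$ takes values in some type family; this is why the hypothesis is stated with $P : A \to \prop$ rather than an arbitrary $P : A \to \type$.
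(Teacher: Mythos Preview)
Your proof is correct and follows essentially the same approach as the paper: obtain the base path from $H$ applied to the two witnesses, then use that $P(y)$ is a mere proposition to supply the fiber path. The only difference is cosmetic—you explicitly cite the $\Sigma$-path characterization (Theorem~2.7.2 of \cite{TheBook}), whereas the paper invokes it implicitly.
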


\begin{proof}
Suppose that $\pairr{x,u}$ and $\pairr{x',u'}$ are elements of $\sm{x:A}P(x)$. Then we have the path $p\defeq
H(u,u'):\id{x}{x'}$. Moreover, there is a path from $\trans{p}{u}$ to $u'$ since $P(x')$ is assumed to be a
mere proposition.
\end{proof}

\begin{lem}\label{lem:iota}
For any family $P:A\to\prop $ of mere propositions there is a function of type
\begin{equation*}
(\uexis{x:A}P(x))\to\sm{x:A}P(x).
\end{equation*}
\end{lem}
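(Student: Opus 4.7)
The plan is to chain the previous lemma with the universal property of $(-1)$-truncation. Given an input $\pairr{h,H}:(\uexis{x:A}P(x))$, we have $h:\sbrck{\sm{x:A}P(x)}$ and $H:\atMostOne(P)$. First I would invoke \autoref{lem:atMostOne} on $H$ to conclude that $\sm{x:A}P(x)$ is itself a mere proposition.

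Once the total space is known to be a mere proposition, the universal property of $(-1)$-truncation (the instance of the truncation lemma cited earlier with $n\defeq -1$, applied to the constant family $P(w)\defeq \sm{x:A}P(x)$) says that precomposition with $|-|:\sm{x:A}P(x)\to\sbrck{\sm{x:A}P(x)}$ is an equivalence between $\sbrck{\sm{x:A}P(x)}\to \sm{x:A}P(x)$ and $\sm{x:A}P(x)\to \sm{x:A}P(x)$. Pulling back the identity function along this equivalence yields a map $\sbrck{\sm{x:A}P(x)}\to \sm{x:A}P(x)$, which I then evaluate at $h$ to produce the desired element of $\sm{x:A}P(x)$.

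Assembling, the function we want is
\begin{equation*}
\lam{\pairr{h,H}} \mathsf{rec}_{\brck{-}}\bigl(\sm{x:A}P(x),\, \mathsf{ap}_{\mathsf{sigma}}(H),\, \lam{u} u\bigr)(h),
\end{equation*}
where the middle argument indicates that we have supplied a proof that $\sm{x:A}P(x)$ is a mere proposition (so the recursor into it is applicable) coming from \autoref{lem:atMostOne}. There is no real obstacle here; the whole point of the lemma is to package the two ingredients of unique existence into a genuine element, and the work has already been done in \autoref{lem:atMostOne}. The only subtlety is making sure the hypothesis of \autoref{lem:atMostOne} is available, namely that each $P(x)$ is a mere proposition, which is exactly what $P:A\to\prop$ provides.
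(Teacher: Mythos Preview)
Your proof is correct and follows essentially the same route as the paper: invoke \autoref{lem:atMostOne} to see that $\sm{x:A}P(x)$ is a mere proposition, and then use that a mere proposition is equivalent to its $(-1)$-truncation (equivalently, that the truncation recursor applies) to extract an element from $\exis{x:A}P(x)$. The paper phrases the second step as the equivalence $\eqv{(\sm{x:A}P(x))}{(\exis{x:A}P(x))}$ rather than via the recursor, but this is the same content.
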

\begin{proof}
Suppose we have $H:\atMostOne(P)$ and $K:\exis{x:A}P(x)$. From $H$ it follows that $\sm{x:A}P(x)$ is a mere proposition, and
therefore it follows that $\eqv{(\sm{x:A}P(x))}{(\exis{x:A}\,P(x))}$. 
\end{proof}

\begin{thm}[The principle of unique choice]\label{thm:UAC}
Suppose that $A$ is a type, that $P:A\to\type$ is a family of types over $A$ 
and that $R:\prd{x:A}(P(x)\to\prop )$ is a family of mere propositions over 
$P$. Then there is a function
\begin{align*}
{\textstyle(\prd{x:A}\uexis{u:P(x)}R(x,u))\to\sm{f:\prd{x:A}P(x)}\prd{x:A}R(x,f(x))}.
\end{align*}
\end{thm}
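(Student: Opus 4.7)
The plan is to reduce the statement to two ingredients already in hand: \autoref{lem:iota}, which converts a unique mere existential into an actual $\Sigma$-type inhabitant, and the type-theoretic distributivity of $\Pi$ over $\Sigma$ (the untruncated ``axiom of choice'' in Martin-L\"of type theory, which is a definitional equivalence obtained by swapping the order in which one forms pairs and applies functions).

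First I would apply \autoref{lem:iota} fiberwise. Assume we are given
\[
\varphi : \prd{x:A}\uexis{u:P(x)}R(x,u).
\]
For each fixed $x:A$, the family $R(x,\blank):P(x)\to\prop$ satisfies the hypotheses of \autoref{lem:iota}, so from $\varphi(x):\uexis{u:P(x)}R(x,u)$ we obtain an element of $\sm{u:P(x)}R(x,u)$. Abstracting over $x$ yields a term
\[
\lam{x}\mathrm{iota}(\varphi(x))\;:\;\prd{x:A}\sm{u:P(x)}R(x,u).
\]

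Next I would apply the distributivity equivalence
\[
\eqv{\Big(\prd{x:A}\sm{u:P(x)}R(x,u)\Big)}{\Big(\sm{f:\prd{x:A}P(x)}\prd{x:A}R(x,f(x))\Big)},
\]
which is defined by sending $h$ to $\pairr{\lam{x}\fst(h(x)),\lam{x}\snd(h(x))}$ (the inverse is analogous). Composing this with the function produced in the previous step gives the required term.

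There is essentially no obstacle here beyond noticing that the untruncated $\Sigma$-inhabitants delivered by \autoref{lem:iota} are what allow us to use the type-theoretic choice equivalence directly, without any further truncation manipulations: the $(-1)$-truncation is absorbed once and for all by \autoref{lem:iota} fiberwise, and distributivity of $\Pi$ over $\Sigma$ is already provable in pure Martin-L\"of type theory. The whole construction is a short composite, so the proof amounts to combining these two facts.
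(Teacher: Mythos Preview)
Your proof is correct and follows exactly the same approach as the paper: apply \autoref{lem:iota} fiberwise to obtain an element of $\prd{x:A}\sm{u:P(x)}R(x,u)$, then use the type-theoretic axiom of choice (distributivity of $\Pi$ over $\Sigma$, called $\choice{\infty}$ in \cite{TheBook}) to reach the conclusion. The paper's proof is terser but identical in substance.
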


\begin{proof}
Suppose that $H:\prd{x:A}(\uexis{u:P(x)}\,R(x))$. 
By \autoref{lem:iota} we can find an element of type $\sm{u:P(x)}R(x,u)$ 
for every $x:A$. A function
\begin{align*}
{\textstyle(\prd{x:A}\sm{u:P(x)}R(x,u))\to(\sm{f:\prd{x:A}P(x)}\prd{x:A}R(x,f(x)))}
\end{align*}
is obtained from the usual choice principle, called $\choice{\infty}$ in \cite{TheBook}.
\end{proof}

The following seemingly stronger variant of $\atMostOne(P)$ helps us
showing that $\atMostOne(P)$ is a mere proposition for every $P:A\to
\prop $. 

\begin{defn}
Let $P:A\to\type$ be a family of types over a type $A$. We define
\begin{equation*}
\mathsf{baseLevel}(-1,P) 
\defeq  \prd{x,y:A}P(x)\to P(y)\to\iscontr (\id{x}{y}).
\end{equation*}
\end{defn}

Notice that
we could replace $\iscontr $ in the definition of $\mathsf{baseLevel}(-1)$
by $\istype{(-2)}{}$ and see that we can easily generalize the notion of
$\mathsf{baseLevel}(-1)$ to $\mathsf{baseLevel}(n)$ for $n\geq -1$.

\begin{lem}\label{lem:atmostone_to_baseLevel}
For any $P:A\to\prop $, there is a function of type
\begin{equation*}
\atMostOne(P)\to\mathsf{baseLevel}(-1,P).
\end{equation*}
\end{lem}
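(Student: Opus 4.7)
The plan is to leverage \autoref{lem:atMostOne}, which (given $H:\atMostOne(P)$) tells us that the total type $\sm{x:A}P(x)$ is a mere proposition. Once we have this, the contractibility of each $\id[A]{x}{y}$ (assuming $P(x)$ and $P(y)$ hold) will essentially be a transport of the contractibility of identity types in a mere proposition through the standard characterization of paths in a $\Sigma$-type.

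More concretely, suppose we are given $H:\atMostOne(P)$ together with $x,y:A$, $u:P(x)$ and $v:P(y)$. First, I would invoke \autoref{lem:atMostOne} on $H$ to conclude that $S\defeq\sm{x:A}P(x)$ is a mere proposition, so the identity type $\id[S]{\pairr{x,u}}{\pairr{y,v}}$ is contractible (since the identity types of a $(-1)$-type are $(-2)$-types). Next, by Theorem 2.7.2 of \cite{TheBook} we have an equivalence
\begin{equation*}
\eqv{\big(\id[S]{\pairr{x,u}}{\pairr{y,v}}\big)}{\Big(\sm{p:\id[A]{x}{y}}\id[P(y)]{\trans{p}{u}}{v}\Big)}.
\end{equation*}
Thus the right-hand side is also contractible.

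Now I would argue that the first projection
\begin{equation*}
\fst:\Big(\sm{p:\id[A]{x}{y}}\id[P(y)]{\trans{p}{u}}{v}\Big)\to\id[A]{x}{y}
\end{equation*}
is an equivalence. Indeed, its homotopy fibre over $p:\id{x}{y}$ is (equivalent to) $\id[P(y)]{\trans{p}{u}}{v}$, which is contractible because $P(y)$ is a mere proposition and so all of its identity types are contractible. Since a function whose fibres are all contractible is an equivalence, and equivalences preserve contractibility (via \autoref{ntype-closed-begin}), we conclude that $\id[A]{x}{y}$ is contractible, which is exactly the inhabitant of $\mathsf{baseLevel}(-1,P)$ we need.

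There is no real obstacle here; the only minor thing to be careful about is distinguishing the two different uses of ``identity types of mere propositions are contractible'': once for $S$ (to get contractibility of the $\Sigma$-path type) and once for $P(y)$ (to collapse the dependent component of that $\Sigma$-path type). Everything else is a routine application of the path-in-$\Sigma$ characterization and the fibrewise criterion for being an equivalence.
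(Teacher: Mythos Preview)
Your proof is correct and follows essentially the same approach as the paper. The paper's argument is just a compressed version of yours: where you unpack the equivalence $\eqv{(\id[A]{x}{y})}{(\id[S]{\pairr{x,u}}{\pairr{y,v}})}$ via the $\Sigma$-path characterization and the fibrewise contractibility of the second component, the paper simply asserts this equivalence in one line (``Since $P$ is assumed to be a proposition, there is an equivalence\ldots'') and then transports contractibility across it.
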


\begin{proof}
We will show that there is a function of type
\begin{equation*}
\atMostOne(P)\to\prd{x,y:A}P(x)\to P(y)\to\iscontr (\id{x}{y}).
\end{equation*}
Let $H$ be an element of type $\atMostOne(P)$ and let $x,y:A$, $u:P(x)$ 
and $v:P(y)$. Then we have the terms $\pairr{x,u}$ and $\pairr{y,v}$ in
$\sm{x:A}P(x)$. Since $\sm{x:A}P(x)$ is a proposition, the path space
$\id{\pairr{x,u}}{\pairr{y,v}}$ is contractible. Since $P$ is assumed to
be a proposition, there is an equivalence $\eqv{(\id{x}{y})}{(\id{\pairr{x,u}}{\pairr{y,v}})}$.
Hence it follows that $\id{x}{y}$ is contractible.
\end{proof}

\begin{cor}
For any family $P:A\to\prop$ of mere propositions, $\atMostOne(P)$ is a mere 
proposition equivalent to $\mathsf{baseLevel}(-1,P)$.
\end{cor}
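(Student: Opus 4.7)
The plan is to prove the two parts of the corollary separately. For the equivalence, the forward implication $\atMostOne(P)\to\mathsf{baseLevel}(-1,P)$ is already the content of the previous lemma, so I only need to construct a converse map. Given $K:\mathsf{baseLevel}(-1,P)$ together with $x,y:A$, $u:P(x)$ and $v:P(y)$, the value $K(x,y,u,v):\iscontr(\id{x}{y})$ provides, via its center of contraction, a path $\id{x}{y}$; this yields the required term of $\atMostOne(P)$.

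For the assertion that $\atMostOne(P)$ is a mere proposition, I would begin by observing that $\mathsf{baseLevel}(-1,P)$ is visibly one: each $\iscontr(\id{x}{y})$ is a mere proposition (being contractible is propositional), and dependent products of mere propositions are mere propositions by the closure result for $n=-1$, which relies on function extensionality. The crux is then to bootstrap from this to $\atMostOne(P)$ itself. My plan is to use the standard principle that any type $T$ admitting a map $T\to\isprop(T)$ is itself a mere proposition: given $t_1,t_2:T$, use $t_1$ to obtain $\isprop(T)$ and then apply it to $t_1$ and $t_2$. Concretely, any $H:\atMostOne(P)$ yields an element of $\mathsf{baseLevel}(-1,P)$ by the previous lemma, so under this hypothesis each identity type $\id{x}{y}$ (for $x,y$ with $P(x)$ and $P(y)$ inhabited) is contractible and hence a mere proposition; closure of dependent products then shows $\atMostOne(P)$ is itself a mere proposition under the hypothesis $\atMostOne(P)$, and the bootstrap principle delivers $\isprop(\atMostOne(P))$ unconditionally.

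Once both sides are known to be mere propositions with a bi-implication between them, the equivalence $\eqv{\atMostOne(P)}{\mathsf{baseLevel}(-1,P)}$ is immediate, since a logical equivalence between mere propositions is an equivalence (either directly, or as a consequence of univalence for mere propositions). The only delicate point in the argument is the self-referential bootstrap used to upgrade the conditional ``$\atMostOne(P)$ implies its own propositionality'' to outright propositionality; everything else is straightforward assembly of ingredients already in hand.
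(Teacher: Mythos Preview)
Your proof is correct and follows essentially the same route as the paper. The paper's version is terser: it phrases the bootstrap directly as ``any two proofs of $\atMostOne(P)$ are homotopic, as the path spaces $\id{x}{y}$ are contractible for $x,y:A$ such that $P(x)$ and $P(y)$'', without naming the abstract principle $T\to\isprop(T)$ implies $\isprop(T)$, and it leaves the propositionality of $\mathsf{baseLevel}(-1,P)$ and the construction of the converse implication implicit. But the underlying argument is the same.
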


\begin{proof}
By \autoref{lem:atmostone_to_baseLevel} any two proofs
of $\atMostOne(P)$ are homotopic, as the path spaces $\id{x}{y}$ are
contractibile for $x,y:A$ such that $P(x)$ and $P(y)$. Thus, $\atMostOne(P)$ is a mere proposition. Since there is a
bi-implication between the mere propositions $\atMostOne(P)$ and 
$\mathsf{baseLevel}(-1,P)$, they are equivalent.
\end{proof}

As an application of unique choice, we show that surjective
functions between sets are regular epimorphisms.

\begin{defn}
Let $f:A\to B$ be a function between sets. Define
\begin{align*}
\isepi{f} & \defeq  \prd{X:\set}{g,h:B\to X}
(g\circ f\sim h\circ f)\to (g\sim h).
\end{align*}
\end{defn}

Since we have restricted the condition of being an epimorphism to the category
of sets, the type $\isepi{f}$ is a mere proposition.

\begin{lem}\label{epis-surj}
Let $f$ be a function between sets. The following are equivalent:
\begin{enumerate}
\item $f$ is an epimorphism.
\item Consider the pushout diagram
\begin{equation*}
\begin{tikzcd}
A \rar{f} \dar & B \dar{\iota} \\ \unit  \rar[swap]{t} & \mappingcone{f}
\end{tikzcd}
\end{equation*}
defining the mapping cone of $f$. The type $\strunc{0}{\mappingcone{f}}$ is contractible.
\item $f$ is surjective.
\end{enumerate}
\end{lem}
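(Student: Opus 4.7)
The plan is to prove the cyclic implications $(3) \Rightarrow (1) \Rightarrow (2) \Rightarrow (3)$.

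For $(3) \Rightarrow (1)$, suppose $f$ is surjective and take $g, h : B \to X$ with $X$ a set and a homotopy $H : g \circ f \htpy h \circ f$. To produce $g \htpy h$, fix $b : B$; since $X$ is a set, $\id[X]{g(b)}{h(b)}$ is a mere proposition, so by surjectivity of $f$ we may assume $(a, p) : \hfib{f}{b}$. Then transporting $H(a)$ along $p$ gives the desired identification $\id{g(b)}{h(b)}$.

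For $(1) \Rightarrow (2)$, consider the two functions $u, v : B \to \strunc{0}{\mappingcone{f}}$ defined by $u(b) \defeq \sbproj{\iota(b)}$ and $v(b) \defeq \sbproj{t(\ttt)}$ (the latter constant). The pushout path $\glue$ in $\mappingcone{f}$ yields, after applying $\sbprojf$ and using that $\strunc{0}{\mappingcone{f}}$ is a set, a homotopy $u \circ f \htpy v \circ f$. Since $f$ is an epimorphism and the target is a set, we get $u \htpy v$, i.e. $\sbproj{\iota(b)} = \sbproj{t(\ttt)}$ for all $b : B$. Now take $\sbproj{t(\ttt)}$ as a center of contraction. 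To see every element of $\strunc{0}{\mappingcone{f}}$ is equal to this center, one does induction (on $\strunc{0}{\mappingcone{f}}$ then on $\mappingcone{f}$): the goal is a mere proposition, and it holds on the image of $\iota$ by what we just showed, on the image of $t$ by reflexivity, and over $\glue$ automatically.

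For $(2) \Rightarrow (3)$, which is the delicate step, define $P : B \to \prop$ by $P(b) \defeq \sbrck{\hfib{f}{b}}$. I extend $P$ to a function $\widetilde{P} : \mappingcone{f} \to \prop$ by sending $t(\ttt)$ to $\unit$, using the universal property of the pushout. To apply it I need to verify, for every $a : A$, that $\id[\prop]{\sbrck{\hfib{f}{f(a)}}}{\unit}$. Both sides are inhabited (by $\sbproj{(a, \refl{f(a)})}$ and $\ttt$), so they are equivalent as mere propositions, hence equal by propositional univalence. Since $\prop$ is a set (again by propositional univalence), $\widetilde{P}$ factors through a map $\bar{P} : \strunc{0}{\mappingcone{f}} \to \prop$. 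Because $\strunc{0}{\mappingcone{f}}$ is contractible, $\bar{P}$ is constant; evaluating at $\sbproj{\iota(b)}$ and $\sbproj{t(\ttt)}$ yields $\id{\sbrck{\hfib{f}{b}}}{\unit}$, so $\sbrck{\hfib{f}{b}}$ is inhabited for every $b : B$.

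The main obstacle is the last direction: one must produce a suitable map out of the higher inductive type $\mappingcone{f}$ and then push it through the $0$-truncation, which crucially relies on $\prop$ being a set (propositional univalence) and on the pushout's universal property applied to a family of propositions.
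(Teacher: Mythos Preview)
Your proof is correct and follows essentially the same strategy as the paper's: the same cycle $(3)\Rightarrow(1)\Rightarrow(2)\Rightarrow(3)$, the same use of the epi property against the two maps $b\mapsto\tproj{0}{\iota(b)}$ and $b\mapsto\tproj{0}{t(\ttt)}$ into $\strunc{0}{\mappingcone{f}}$, and the same construction of a $\prop$-valued family on $\mappingcone{f}$ (using propositional univalence) to extract surjectivity from contractibility. Your write-up is in places slightly more careful than the paper's (you apply the epi hypothesis explicitly to maps into a \emph{set}, whereas the paper phrases this a bit loosely as ``$\iota\sim\lam{b}t(\ttt)$''); note only that your $\sbproj{\blank}$ should be the $0$-truncation projection $\tproj{0}{\blank}$ in the $(1)\Rightarrow(2)$ step.
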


\begin{proof}
To show that $\isepi{f}\to\iscontr (\strunc{0}{\mappingcone{f}})$, suppose that
$H:\isepi{f}$. The basic constructor $t$ of $\mappingcone{f}$ gives us the element 
$\tproj{0}{t(\ttt)}:\strunc{0}{\mappingcone{f}}$. We have to show that
\begin{equation*}
\prd{x:\strunc{0}{\mappingcone{f}}}\id{x}{\tproj{0}{t(\ttt)}}.
\end{equation*}
Note that the type $\id{x}{\tproj{0}{t(\ttt)}}$ is a mere proposition because 
$\strunc{0}{\mappingcone{f}}$ is a set, hence
it is equivalent to show that
\begin{equation*}
\prd{w:\mappingcone{f}}\id{\tproj{0}{w}}{\tproj{0}{t(\ttt)}}
\end{equation*} 
which is by \autoref{trunc-idtypes} equivalent to
\begin{equation*}
\prd{w:\mappingcone{f}}\sbrck{\id{w}{t(\ttt)}}.
\end{equation*} 
We can use induction on
$\mappingcone{f}$: it suffices to find
\begin{align*}
I_0 & : \prd{b:B}\sbrck{\id{\iota(b)}{t(\ttt)}}\\
I_1 & : \prd{a:A}\id{\trans{\glue(a)}{I_0(f(a))}}{\sbproj{\refl{t(\ttt)}}}.
\end{align*}
where $\glue:\prd{a:A}\id{\iota(f(a))}{t(\ttt)}$ is the path constructor
of $\mappingcone{f}$.  Since the type of $I_1$ is the type of sections of a family
of identity types of propositions -- which are thus contractible -- we get $I_1$ for free. 
Since $f$ is epi and since we have $\glue:\iota\circ f \sim (\lam{b}t(\ttt))\circ f$, we get a homotopy  $\iota \sim 
\lam{b}t(\ttt)$, which gives us $I_0$.

To show that $\iscontr (\strunc{0}{\mappingcone{f}})\to\mathsf{surj}(f)$,
let $H:\iscontr (\strunc{0}{\mappingcone{f}})$. Using the univalence axiom, we construct
a family $P:\strunc{0}{\mappingcone{f}}\to\prop$ of mere propositions. Note that $\prop$ is
a set, so it suffices to define the family $P(\strunc{0}{\blank}):\mappingcone{f}\to\prop$.
For this we can use induction on $\mappingcone{f}$: we define
\begin{align*}
P(\tproj{0}{t(x)}) & \defeq  \unit & & \text{for }x:\unit\\
P(\tproj{0}{\iota(b)}) & \defeq  \sbrck{\hfib{f}b} & & \text{for }b:B.
\end{align*}
For $a:A$ the type $\sbrck{\hfib{f}{f(a)}}$ is canonically
equivalent to $\unit $, which finishes the construction of $P$.
Since $\strunc{0}{\mappingcone{f}}$ is assumed to be contractible it follows that $P(x)$ is
equivalent to $P(\tproj{0}{t(\ttt)})$ for any $x:\strunc{0}{\mappingcone{f}}$. In particular we find that
$\sbrck{\hfib{f}b}$ is contractible for each $b:B$, showing
that $f$ is surjective.

To show that $\mathsf{surj}(f)\to\isepi{f}$,
let $f:A\to B$ be a surjective function and consider a set $C$ and two functions
$g,h:B\to C$ with the property that $g\circ f\sim h\circ f$. Since $f$ 
is assumed to be surjective,
we have an equivalence $\eqv{B}{\im(f)}$. Since identity types in sets are
propositions, we get
\begin{align*}
\prd{b:B}\id{g(b)}{h(b)}
& \eqvsym \prd{w:\im(f)}\id{g(\proj1 w)}{h(\proj1(w))}\\
& \eqvsym \prd{b:B}{a:A}{p:\id{f(a)}{b}}\id{g(b)}{h(b)}\\
& \eqvsym \prd{a:A}\id{g(f(a))}{h(f(a))}.
\end{align*}
By assumption, there is an element of the latter type.
\end{proof}

The proof that epis are surjective in~\cite{Mines/R/R:1988} uses the power set operation. 
This proof can be made predicative by using a large power set and typical ambiguity.
A predicative proof for setoids was given by Wilander~\cite{Wilander2010}. 
The proof above is similar, but avoids setoids by using pushouts and the
univalence axiom.

\begin{thm}\label{prop:images_are_coequalizers}
Surjective functions between sets are regular epimorphisms.
\end{thm}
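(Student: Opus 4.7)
The plan is to show that a surjective $f:A\to B$ is (homotopic to) the set-coequalizer of its own kernel pair $\pi_1,\pi_2:(\sm{a,a':A}\id{f(a)}{f(a')})\to A$. By uniqueness of coequalizers, it suffices to verify that $f:A\to B$ satisfies the universal property of \autoref{coeq} with respect to $\pi_1,\pi_2$. The compatibility $f\circ\pi_1\sim f\circ\pi_2$ is immediate, since the third component of an element of the kernel pair is precisely such a path.

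For the universal factorization, I would take an arbitrary set $C$ and a map $h:A\to C$ satisfying $h\circ\pi_1\sim h\circ\pi_2$, i.e.\ $\prd{a,a':A}{p:\id{f(a)}{f(a')}}\id{h(a)}{h(a')}$, and construct a unique $k:B\to C$ with $k\circ f\sim h$. The natural candidate is obtained by applying the principle of unique choice (\autoref{thm:UAC}) to the family
\begin{equation*}
R(b,c)\defeq \exis{a:A}(\id{f(a)}{b})\land(\id{h(a)}{c}),
\end{equation*}
which is a mere proposition for each $b:B,c:C$. Existence of some $c$ with $R(b,c)$ follows from the surjectivity of $f$: given any $(a,\_):\hfib{f}{b}$, the element $c\defeq h(a)$ witnesses $R(b,h(a))$. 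For the $\atMostOne$ condition, note that $\id{c}{c'}$ is a mere proposition since $C$ is a set; this allows us to eliminate both truncations in $R(b,c)$ and $R(b,c')$, producing witnesses $(a,p,q)$ and $(a',p',q')$ with $f(a)\jdeq b\jdeq f(a')$ and $h(a)=c$, $h(a')=c'$. The hypothesis on $h$, applied to $\ct{p}{\opp{p'}}:\id{f(a)}{f(a')}$, then gives $\id{h(a)}{h(a')}$, hence $\id{c}{c'}$.

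Unique choice thus yields $k:B\to C$ with $R(b,k(b))$ for every $b$. The homotopy $k\circ f\sim h$ follows by instantiating the $\atMostOne$ argument above at $R(f(a),k(f(a)))$ and the canonical witness $(a,\refl{f(a)},\refl{h(a)})$ of $R(f(a),h(a))$. Uniqueness of $k$ is not something to reprove here: it is a direct consequence of \autoref{epis-surj}, which has just shown that any surjection is an epimorphism in $\catset$.

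The main obstacle is really just the delicate setup of unique choice: one must choose $R$ so that its $\atMostOne$ clause encodes exactly the coequalizing hypothesis on $h$, and one must be careful that the truncation eliminations are only performed into types that are demonstrably mere propositions (here using that $C$ is a set). Once the relation $R$ is in place, each subsequent step is a short application of a preceding lemma: surjectivity for existence, the coequalizing assumption for uniqueness, \autoref{thm:UAC} for the construction of $k$, and \autoref{epis-surj} for its uniqueness.
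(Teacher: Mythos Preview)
Your proposal is correct and follows essentially the same approach as the paper: define the factoring map via the principle of unique choice (\autoref{thm:UAC}) and obtain its uniqueness from \autoref{epis-surj}. The only differences are cosmetic: the paper proves the slightly more general statement that for \emph{any} $f$ the map $\tilde{f}:A\to\im(f)$ coequalizes the kernel pair (then specializes to surjective $f$ via $\eqv{B}{\im(f)}$), and it uses the dual ``universal'' relation $R(w,x)\defeq\prd{a:A}(\id{\tilde{f}(a)}{w})\to(\id{g(a)}{x})$ rather than your existential one, which shifts the work between the existence and $\atMostOne$ clauses but changes nothing essential.
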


\begin{proof}
Note that it suffices to show that for any function $f:A\to B$, the diagram
\begin{equation*}
\begin{tikzcd}
\sm{x,y:A}\id{f(x)}{f(y)} \ar[yshift=.7ex]{r}{\pi_1} 
\ar[yshift=-.7ex]{r}[swap]{\pi_2} & A \ar{r}{\tilde{f}} & \im(f)
\end{tikzcd}
\end{equation*}
is a coequalizer diagram (recall the definition of $\tilde{f}$ from
\autoref{defn:im-fact}).

We first construct a homotopy $H:\tilde{f}\circ\pi_1\sim\tilde{f}\circ\pi_2$. Let $\pairr{x,y,p}$ be an element
of $\sm{x,y:A}\id{f(x)}{f(y)}$. 
Then we have $\id{\tilde{f}(\pi_1(\pairr{x,y,p}))}{\pairr{f(x),u}}$, 
where $u$ is an element of the contractible type $\sbrck{\hfib{f}{f(x)}}$. 
Similarly, we have a path 
$\id{\tilde{f}(\pi_2(\pairr{x,y,p}))}{\pairr{f(y),v}}$,
where $v$ is an element of the contractible type $\sbrck{\hfib{f}{f(y)}}$.
Since we have $p:\id{f(x)}{f(y)}$ and since
$\sbrck{\hfib{f}{f(y)}}$ is contractible, 
it follows that we get a path from $\tilde{f}(\pi_1(\pairr{x,y,p}))$ to
$\tilde{f}(\pi_2(\pairr{x,y,p}))$, which gives us our homotopy $H$.

Now suppose that $g:A\to X$ is a function for which there is a homotopy 
$K:g\circ\pi_1\sim g\circ\pi_2$. We have to show that the type
\begin{equation*}
\sm{h:\im(f)\to X} h\circ\tilde{f}\sim g
\end{equation*}
is contractible. We will apply unique choice to define a 
function from $\im(f)$ to $X$. Let $R:\im(f)\to
X\to\prop$ be the relation defined by 
\begin{align*}
R(w,x) & \defeq \prd{a:A}(\id{\tilde{f}(a)}{w})\to (\id{g(a)}{x})
\end{align*}
There is an element of $\atMostOne(R(w))$ for every $w:\im(f)$. 
To see this, note that the type $\atMostOne(R(w))$ is a
mere proposition. Therefore, there is an equivalence
\begin{equation*}
\eqv{\big(\prd{w:\im(f)}\atMostOne(R(w))\big)}
{\prd{a:A}\atMostOne(R(\tilde{f}(a)))}.
\end{equation*}
Let $a:A$, $x,x':X$,
$u:R(\tilde{f}(a),x)$ and $u':R(\tilde{f}(a),x')$. 
Then there are the paths $u(a,\refl{\tilde{f}(a)}):\id{g(a)}{x}$
and $u'(a,\refl{\tilde{f}(a)}):\id{g(a)}{x'}$, 
showing that $\id{x}{x'}$. 

Also, there is an element of $\exis{x:X}R(w,x)$ for every $w:\im(f)$. Indeed, the type
\begin{equation*}
\prd{w:\im(f)}\exis{x:X}R(w,x)
\end{equation*}
is equivalent to the type
\begin{equation*}
\prd{a:A}\exis{x:X}R(\tilde{f}(a),x).
\end{equation*}
The type $\prd{a:A}\sm{x:X}R(\tilde{f}(a),x)$ is inhabited by the element
\begin{equation*}
\lam{a}\pairr{g(a),(\lam{a'}{p}K(\pairr{a',a,\opp{p}}))}
\end{equation*}
This shows that the hypotheses of the principle of unique choice are satisfied, so we get an element of type
\begin{equation*}
\sm{h:\im(f)\to X}\prd{w:\im(f)}R(w,h(w)).
\end{equation*}
An immediate consequence of the way we constructed our function $h:\im(f)\to X$ is that $h\circ\tilde{f}\sim g$. The result follows
now from the observation that the type
\begin{equation*}
\sm{h':\im(f)\to X}h'\circ\tilde{f}\sim h\circ\tilde{f}
\end{equation*}
is contractible because $\tilde{f}$ is an epimorphism. 
\end{proof}

\begin{lem}\label{lem:pb_of_coeq_is_coeq}
Pullbacks of surjective functions are surjective. Consequently,
pullbacks of coequalizers are coequalizers.
\end{lem}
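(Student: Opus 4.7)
The plan is to prove the first statement directly by calculating the fiber of the pullback projection, and then derive the second statement as a formal consequence of \autoref{prop:images_are_coequalizers}.

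For the first claim, let $f:A\to B$ be surjective and $g:C\to B$ an arbitrary function, and form the pullback $A\times_B C\defeq \sm{a:A}{c:C}(\id{f(a)}{g(c)})$ with second projection $\pi_2$. I would compute, for each $c:C$,
\begin{align*}
\hfib{\pi_2}{c}
& \jdeq \sm{a:A}{c':C}{p:\id{f(a)}{g(c')}}(\id{c'}{c}) \\
& \eqvsym \sm{a:A}(\id{f(a)}{g(c)}) \\
& \jdeq \hfib{f}{g(c)},
\end{align*}
where the middle equivalence is obtained by reassociating the nested $\Sigma$-types and contracting the based path space $\sm{c':C}(\id{c'}{c})$ around its center $\pairr{c,\refl{c}}$. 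Applying $(-1)$-truncation to this equivalence (by \autoref{ntype-closed-begin}, since equivalences are preserved) yields $\eqv{\brck{\hfib{\pi_2}{c}}}{\brck{\hfib{f}{g(c)}}}$, and the latter is inhabited because $f$ is surjective. Hence $\pi_2$ is surjective.

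For the consequence, \autoref{prop:images_are_coequalizers} (together with \autoref{epis-surj}) identifies surjections between sets with set-coequalizers, i.e.\ with regular epimorphisms. So if $q:B\to Q$ is a coequalizer and $h:X\to Q$ any function, then $q$ is surjective, its pullback projection $B\times_Q X\to X$ is surjective by the first part, and therefore this projection is itself a coequalizer. There is no real obstacle here: the only care required lies in the fiber computation, where one has to be precise about reassociating the iterated $\Sigma$-types and invoking contractibility of the based path space. The second assertion is then automatic from the surjection/coequalizer correspondence already established.
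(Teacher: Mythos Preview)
Your proof is correct and follows essentially the same approach as the paper: both establish the equivalence $\eqv{\hfib{\pi_2}{c}}{\hfib{f}{g(c)}}$, with the paper invoking the pasting lemma for pullbacks abstractly while you unfold the $\Sigma$-type computation explicitly. The deduction of the ``consequently'' clause via \autoref{prop:images_are_coequalizers} and \autoref{epis-surj} is exactly what the paper leaves implicit; your citation of \autoref{ntype-closed-begin} for preservation of equivalences under truncation is slightly off (that lemma concerns closure of $n$-types, not functoriality of $\truncf{-1}$), but the needed fact is immediate and this is not a gap.
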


\begin{proof}
Consider a pullback diagram
\begin{equation*}
\begin{tikzcd}
A \rar \dar[][swap]{f} & B \dar{g} \\ C \rar[][swap]{h} & D
\end{tikzcd}
\end{equation*}
and assume that $g$ is surjective. Applying the pasting lemma of pullbacks
with the morphism $c:\unit \to C$, we obtain an
equivalence $\eqv{\hfib{f}c}{\hfib{g}{h(c)}}$ for any
$c:C$. This equivalence gives that $f$ is surjective.
\end{proof}

\begin{thm}\label{thm:set_regular}
The category $\catset$ is regular.
\end{thm}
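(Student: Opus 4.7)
The plan is to verify the three defining conditions of a regular category in the sense of \cite{johnstone:elephant}, namely: (a) $\catset$ has all finite limits; (b) every morphism factors as a regular epimorphism followed by a monomorphism; and (c) regular epimorphisms are stable under pullback. Each of these is essentially a bookkeeping task that assembles results already established in the excerpt.

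For (a), finite limits in $\catset$ are inherited from the fact that all (small) limits can be constructed in type theory using dependent products and identity types together with function extensionality, as mentioned in the introduction; the relevant closure under dependent pairs and identity types is \autoref{lem:hlevel_prod} and the subsequent closure lemma, which guarantee the limits land in $\catset$ when the input diagram does. So the inclusion $\catset \hookrightarrow \type$ creates finite limits.

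For (b), given $f : A \to B$ between sets, the image factorization of \autoref{defn:im-fact} gives $f = i_f \circ \tilde f$, where $\tilde f : A \to \im(f)$ is surjective and $i_f : \im(f) \to B$ is injective. The target $\im(f)$ is a set, since it is a $\sm$-type whose fibres are $(-1)$-truncated by construction; injectivity of $i_f$ means it is a monomorphism (as noted after the proof characterising $(n+1)$-truncated functions). By \autoref{prop:images_are_coequalizers}, the surjection $\tilde f$ is a regular epimorphism — explicitly, it is the coequalizer of the kernel pair $\pi_1,\pi_2 : (\sm{x,y:A}\id{f(x)}{f(y)}) \rightrightarrows A$. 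This gives the required (regular epi, mono) factorization.

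For (c), stability under pullback of regular epimorphisms is exactly \autoref{lem:pb_of_coeq_is_coeq}: surjections pull back to surjections, and by (b) plus \autoref{prop:images_are_coequalizers} the surjections coincide with the regular epimorphisms in $\catset$. Combining (a), (b) and (c) yields regularity. I do not expect a serious obstacle here; the only mild subtlety is to note that, in the univalent setting, being a regular epi is a mere proposition (so uniqueness of factorization is a non-issue up to equivalence), and that $\im(f)$ is indeed a set, both of which follow from the closure properties of $\set$ collected in the preliminaries.
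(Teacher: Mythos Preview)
Your proof is correct and uses essentially the same ingredients as the paper: finite completeness, \autoref{prop:images_are_coequalizers}, and \autoref{lem:pb_of_coeq_is_coeq}. The only difference is cosmetic: the paper verifies the variant definition of regularity asking that every kernel pair have a coequalizer, whereas you verify the equivalent Johnstone-style formulation via (regular epi, mono) factorizations; both reduce immediately to the same two lemmas.
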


\begin{proof}
$\catset$ has all limits, so it is finitely complete. 
\autoref{prop:images_are_coequalizers} gives
that the kernel pair of each function has a coequalizer.
\autoref{lem:pb_of_coeq_is_coeq} gives that
coequalizers are stable under pullbacks.
\end{proof}

\subsection{The $1$-category of equivalence relations}

Setoids were introduced by Bishop~\cite{Bishop1967} to model extensional 
functions in an unspecified effective framework.
Hofmann~\cite{hofmann1995extensional} developed this theory to build a model of extensional type theory 
in an intensional type theory.
However, with general setoids we cannot hope to obtain a pre-category in
the sense of \cite{TheBook} and we do wish to end up with a (Rezk-complete) 
$1$-category when we do assume
univalence. Hence we shall 
restrict to \emph{mere propositional} equivalence relations over sets to obtain the 
pre-category of setoids. These are the objects of the precategory $\eqrel$;
the morphisms will be required to respect these
mere equivalence relations.

As a consequence of this restriction, $\catset$ becomes a reflective subcategory $\eqrel$. 
We establish this result in \autoref{sec:quotients}. 
In the presence of the axiom of choice \cite[Eq.~3.8.1]{TheBook} 
the categories are even equivalent. 

The pre-category $\eqrel$ is reminiscent of the ex/lex completion of $\catset$.
Note that instead of functions one could also consider total functional relations. 
Since $\catset$ is a regular $1$-category satisfying the principle of unique 
choice (\autoref{thm:UAC}), these options are equivalent.
The construction in \autoref{sec:resizing} uses a Yoneda construction which is
also reminiscent of the ex/lex completion; see~\cite{maietti2012elementary} for an overview.

\begin{defn}
An \emph{equivalence relation} over a type $A$ consists of a mere relation
$R :A\to A\to\prop $ which is \emph{reflexive}, \emph{symmetric} and
\emph{transitive}, i.e.\ there are elements
\begin{align*}
\rho & : \prd{x:A}R (x,x)\\
\sigma & : \prd{x,y:A}R (x,y)\to R (y,x)\\
\tau & : \prd{x,y,z:A}R (y,z)\to R (x,y)\to R (x,z).
\end{align*}
We also write $\mathsf{isEqRel}(R)$ for the type witnessing that
$R $ is an equivalence relation.
\end{defn}

\begin{defn}
We define
\begin{equation*}
\mathsf{ob}(\eqrel )\defeq \sm{A:\set}{R:A\to A\to\prop }
\mathsf{isEqRel}( R).
\end{equation*}
Usually we shall slightly abuse notation and speak of $\pairr{A,R}$ as an object
of $\eqrel $, leaving the witness of $\mathsf{isEqRel}(R)$ implicit.
\end{defn}

\begin{defn}
Let $\Gamma\defeq \pairr{A,R}$ and $\Delta\defeq \pairr{B,S}$
be objects of $\eqrel $. A morphism $f$ from $\Delta$ to $\Gamma$ is a
pair $\pairr{f_0,f_1}$ consisting of
\begin{align*}
f_0 & : B\to A\\
f_1 & : \prd{x,y:B}S(x,y)\to R(f_0(x),f_0(y)).
\end{align*}
Thus, we define
\begin{equation*}
\mathsf{hom}(\Delta,\Gamma) \defeq  \sm{f_0:B\to A}
\prd{x,y:B}S(x,y)\to R(f_0(x),f_0(y))
\end{equation*}
which is a sub\emph{set} of the function \emph{set}.
We will usually denote the type $\mathsf{hom}(\Delta,\Gamma)$ by $\Delta\to
\Gamma$. The identity morphisms $\idfunc[\Gamma]$ and the composite morphisms
$g\circ f$ are defined in the obvious way.
\end{defn}

In the following theorem we use the univalence axiom to deduce that we get
a $1$-category of equivalence relations.
However, in our result that $\catset$ is a $\piw$-pretopos we will not
use univalence and hence we will not use the fact that $\eqrel $ is
a $1$-category.

\begin{lem}
For any two setoids $\Gamma$ and $\Delta$, the type
of isomorphisms from $\Delta$ to $\Gamma$ is equivalent to the type $\id{\Delta}{\Gamma}$ in 
$\mathsf{ob}(\eqrel )$. 
\end{lem}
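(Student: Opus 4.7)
The plan is to chain together characterizations of identity types, univalence at both the universe $\set$ and the universe $\prop$, and a direct analysis of isomorphisms in the pre-category $\eqrel$. Write $\Delta \defeq \pairr{B,S,e_\Delta}$ and $\Gamma\defeq \pairr{A,R,e_\Gamma}$, where $e_\Delta$ and $e_\Gamma$ are the witnesses that $S$ and $R$ are equivalence relations. Since $\mathsf{isEqRel}(R)$ is a product of dependent function types into mere propositions, it is itself a mere proposition, so its contribution to identifications is trivial. Iterating Theorem 2.7.2 of \cite{TheBook}, we obtain
\begin{equation*}
(\id{\Delta}{\Gamma}) \simeq \sm{p:\id[\set]{B}{A}}(\id[A\to A\to \prop]{\trans{p}{S}}{R}).
\end{equation*}

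Next, I would convert these paths into equivalences and bi-implications. Univalence in $\set$ gives $(\id[\set]{B}{A}) \simeq (\eqv{B}{A})$. For an equivalence $e:\eqv{B}{A}$, transport along the corresponding univalent path turns $S$ into the relation $(a_1,a_2)\mapsto S(e^{-1}(a_1),e^{-1}(a_2))$, so by function extensionality and univalence for mere propositions the second component is equivalent to
\begin{equation*}
\prd{b_1,b_2:B}\big(S(b_1,b_2) \leftrightarrow R(e(b_1),e(b_2))\big).
\end{equation*}
Combining these equivalences yields
\begin{equation*}
(\id{\Delta}{\Gamma}) \simeq \sm{e:\eqv{B}{A}}\prd{b_1,b_2:B}\big(S(b_1,b_2) \leftrightarrow R(e(b_1),e(b_2))\big).
\end{equation*}

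It remains to match this with the type of isomorphisms $\Delta \cong \Gamma$ in $\eqrel$. An isomorphism consists of morphisms $f=(f_0,f_1):\Delta\to\Gamma$ and $g=(g_0,g_1):\Gamma\to\Delta$ together with proofs that $g\circ f = \idfunc[\Delta]$ and $f\circ g = \idfunc[\Gamma]$. Because the second component of a morphism lands in a family of mere propositions, equality in $\mathsf{hom}$ reduces to equality of underlying functions, so these equations amount to $g_0\circ f_0 = \idfunc[B]$ and $f_0\circ g_0 = \idfunc[A]$, exhibiting $f_0$ as an equivalence with inverse $g_0$. The data $f_1$ and $g_1$ then supply precisely the two directions of the bi-implication $S(b_1,b_2)\leftrightarrow R(f_0(b_1),f_0(b_2))$ (after transporting $g_1$ along $f_0$). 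The main obstacle is the bookkeeping required to verify that the equivalence $e$ obtained via $\idtoiso$ followed by univalence agrees, up to homotopy, with the underlying map $f_0$ read off from the isomorphism, and that $\trans{p}{S}$ really unfolds to the expected pointwise relation; once these identifications are in place, propositional univalence assembles everything into a chain of equivalences from $\id{\Delta}{\Gamma}$ to the type of isomorphisms $\Delta \cong \Gamma$.
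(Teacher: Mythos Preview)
Your proof is correct and follows essentially the same route as the paper: characterize $\id{\Delta}{\Gamma}$ via univalence (for $\set$ and for $\prop$) and function extensionality as a $\Sigma$-type over $\eqv{B}{A}$ with a propositional fiber, characterize $\Delta\cong\Gamma$ likewise, and match the two. The paper indexes the relation condition by $x,y:A$ using $e^{-1}$ while you index by $b_1,b_2:B$ using $e$, but this is a cosmetic change of variables. Your closing worry about verifying that the equivalence obtained via $\idtoiso$ coincides with $f_0$ is not needed for the statement as written, which only asks for \emph{some} equivalence between the two types; both sides reduce to $\Sigma$-types over $\eqv{B}{A}$ with logically equivalent propositional fibers, so they are equivalent without any further coherence check.
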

\begin{proof}
First observe that we have an equivalence
\begin{equation*}
\eqv{(\id{\Delta}{\Gamma})}{\big(\sm{e:\eqv{B}{A}}\prd{x,y:A}
\eqv{S(e^{-1}(x),e^{-1}(y))}{R(x,y)}\big)}.
\end{equation*}
As for the type of isomorphisms from $\Delta$ to $\Gamma$, note that we have an
equivalence
\begin{align*}
\eqv{(\Delta\cong\Gamma)}{} 
& (\sm{\pairr{f_0,g_0,\eta_0,\varepsilon_0}:B\cong A}\\
& \qquad (\prd{x,y:B}S(x,y)\to R(f_0(x),f_0(y)))\\
& \qquad \times(\prd{x,y:A}R(x,y)\to S(g_0(x),g_0(y)))).
\end{align*}
Since $B$ and $A$ are assumed to be sets, we have that
$\eqv{(B\cong A)}{(\eqv{B}{A})}$ and therefore it suffices
to show that the type
\begin{align*}
{\textstyle(\prd{x,y:B}S(x,y)\to R(f_0(x),f_0(y)))
\times (\prd{x,y:A}R(x,y)\to S(g_0(x),g_0(y)))}
\end{align*}
is equivalent to the type $\prd{x,y:A}
\eqv{(S(g_0(x),g_0(y))}{R(x,y))}$ for every isomorphism 
$\pairr{f_0,g_0,\eta_0,\epsilon_0}:{B\cong A}$.
Note that both types are mere propositions,
so we only have to find implications in both directions. These can be found by
using $\eta_0:g_0\circ f_0\sim\idfunc[B]$ and $\epsilon_0:f_0\circ g_0
\sim\idfunc[A]$.
\end{proof}

We already mentioned the inclusion of $\catset$ into $\eqrel $. We 
define it on objects by $A\mapsto\pairr{A,\idtypevar{A}}$. Recall that for each function
$f:A\to B$ there is a function $\mapfunc{f}:
\prd{x,y:A}(\id{x}{y})\to (\id{f(x)}{f(y)})$, and hence defines a map
between the setoids $\pairr{A,\idtypevar{A}}$ and $\pairr{B,\idtypevar{B}}$.
It comes at no surprise that this determines a functor R. 

\begin{lem}
The inclusion of $\catset$ into $\eqrel $ is full and faithful.
\end{lem}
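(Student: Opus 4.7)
The plan is to unfold the definitions and reduce full-faithfulness to showing that a certain dependent product type is contractible, using only the fact that $B$ is a set.

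By the definition of morphisms in $\eqrel$ and the fact that the inclusion sends $A$ to $\pairr{A,\idtypevar{A}}$, the induced map on hom-sets is
\begin{equation*}
(A\to B)\ \longrightarrow\ \sm{f_0:A\to B}\prd{x,y:A}(\id{x}{y})\to(\id{f_0(x)}{f_0(y)}),
\end{equation*}
sending $f$ to $\pairr{f,\mapfunc{f}}$. This map is obviously a section of the first projection $\proj1$, so it will be an equivalence as soon as $\proj1$ is an equivalence, i.e.\ as soon as every fiber of $\proj1$ is contractible. The fiber over $f_0:A\to B$ is precisely the type
\begin{equation*}
\prd{x,y:A}(\id{x}{y})\to(\id{f_0(x)}{f_0(y)}).
\end{equation*}

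So the key step is to show this dependent product is contractible whenever $B$ is a set. For any $x,y:A$ and any $p:\id{x}{y}$, the type $\id{f_0(x)}{f_0(y)}$ lives in $B$, and since $B$ is a $0$-type its identity types are $(-1)$-types by the closure properties of \autoref{lem:ntype_is_sntype}; thus $\id{f_0(x)}{f_0(y)}$ is a mere proposition. It is inhabited by $\mapfunc{f_0}(p)$, so it is in fact contractible. Then \autoref{lem:hlevel_prod} (closure of truncation levels under dependent products) gives that the whole dependent product is contractible as well.

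Combining these observations, the projection $\proj1$ from the hom-type in $\eqrel$ back to $A\to B$ is an equivalence, and its inverse is the map induced by the inclusion. This gives both fullness and faithfulness simultaneously. I do not expect any real obstacles: the proof is essentially an unfolding of definitions together with the observation that $\mapfunc{f_0}$ is unique up to homotopy because the codomain is a set.
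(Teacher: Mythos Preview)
Your proof is correct and follows essentially the same approach as the paper: both observe that the inclusion is a section of $\proj1$ and that the second component of a morphism in $\eqrel$ between $\pairr{A,\idtypevar{A}}$ and $\pairr{B,\idtypevar{B}}$ is forced because $B$ is a set. The paper phrases this as ``$\proj1$ is a section for the inclusion'' directly, while you reach the same conclusion via contractibility of the fibers of $\proj1$. One small remark: the fact that identity types of a $0$-type are $(-1)$-types is the \emph{definition} of $\istype{0}$, not a consequence of \autoref{lem:ntype_is_sntype}.
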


\begin{proof}
We have to show that for any two sets $A$ and $B$, the inclusion determines
an equivalence $\eqv{(A\to B)}{(\pairr{A,\idtypevar{A}}\to\pairr{B,\idtypevar{B}})}$.
Naturally, we choose the inverse to be the first projection. Since the identity
types on sets are mere propositions, it is immediate that the first projection is
a section for the inclusion.
\end{proof}

\subsection{Quotients}\label{sec:quotients}
Given an object $\pairr{A,R}$ of $\eqrel $, we wish to define a set $Q(A,R)$
together with a morphism $f:\pairr{A,R}\to\pairr{Q(A,R)
,\idtypevar{Q(A,R)}}$ in $\eqrel $
with the universal property that precomposition with $f_0$ gives an equivalence
\begin{equation*}
\eqv{(Q(A,R)\to Y)}{\mathsf{hom}_\eqrel (\pairr{A,R },\pairr{Y,\idtypevar{Y}})}
\end{equation*}
for every set $Y$. In other words, we are looking for a left adjoint to the
inclusion $X\mapsto\pairr{X,\idtypevar{X}}$ of $\catset$ into $\eqrel $.
Such a left adjoint is mapping the setoid $\pairr{A,R}$ to
the quotient $A/R $.

There are several solutions to this problem, of which we present two. 
The first solution uses higher inductive types of the kind presented in
\autoref{colim}.

\begin{defn}
Let $A$ be a set and let $R:A\to A\to\prop $ be a binary mere relation over
$A$ (not necessarily an equivalence relation). 
We define $A/R$ to be the type $\trunc{0}{\tfcolim(\pairr{A,R})}$.
\end{defn}

Since a binary relation $R:A\to A\to\type$ is equivalently described as a pair
of functions by the two projections $\pi_1,\pi_2:(\sm{x,y:A}R(x,y))\to A$, we
get the following lemma from \autoref{coeq}:

\begin{lem}
Let $A$ be a set and let $R:A\to A\to\prop$ be a binary mere relation over 
$A$. Then $A/R$ is the (set-)coequalizer of the two projections
$\pi_1,\pi_2:(\sm{x,y:A}R(x,y))\to A$.
\end{lem}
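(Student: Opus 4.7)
The plan is to unfold both sides to a common form and then invoke the definition of the colimit higher inductive type. By construction $A/R \jdeq \strunc{0}{\tfcolim(\pairr{A,R})}$, while by \autoref{coeq} and the definition preceding it,
\[ \coeq{A}{\pi_1}{\pi_2} \jdeq \strunc{0}{\tfcolim(\pairr{A,\eq{\pi_1}{\pi_2}})}, \]
where $\eq{\pi_1}{\pi_2}(b,b') \defeq \sm{w:\sm{x,y:A}R(x,y)}(\id{\pi_1(w)}{b})\times(\id{\pi_2(w)}{b'})$. So it is enough to compare the two graphs $\pairr{A,R}$ and $\pairr{A,\eq{\pi_1}{\pi_2}}$ on the common vertex set $A$.

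First I would verify that $\eq{\pi_1}{\pi_2}$ really takes values in $\prop$ as required by the definition of $\tfcolim$: writing $w \jdeq \pairr{x,y,r}$ with $r:R(x,y)$ and using that $A$ is a set and $R(x,y)$ is a mere proposition, the data $(x,y,r,p,q)$ are uniquely determined by $(b,b')$ together with any witness of inhabitation, so $\eq{\pi_1}{\pi_2}(b,b')$ is a mere proposition. Next, by unfolding the existential and contracting with the path $p:\id{x}{b}$ and $q:\id{y}{b'}$ (Theorem 2.7.2 of \cite{TheBook} combined with the fact that $\sm{x:A}\id{x}{b}$ is contractible), one obtains an equivalence
\[ \eq{\pi_1}{\pi_2}(b,b') \eqvsym R(b,b') \]
natural in $b$ and $b'$. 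Hence by function extensionality and univalence for mere propositions the two families of edges are equal as elements of $A\to A\to\prop$.

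From this pointwise identification the two graphs $\pairr{A,R}$ and $\pairr{A,\eq{\pi_1}{\pi_2}}$ are propositionally equal, so $\tfcolim(\pairr{A,R}) \eqvsym \tfcolim(\pairr{A,\eq{\pi_1}{\pi_2}})$ and the $0$-truncations of both colimits agree, which is what we wanted to prove. Alternatively, one could bypass univalence by appealing directly to the induction principle of \autoref{colim}: a map out of $\tfcolim(\pairr{A,R})$ into a set is precisely a function $h:A\to C$ satisfying $R(x,y) \to (\id{h(x)}{h(y)})$, which is in turn precisely the same data as a function $h:A\to C$ with $h\circ\pi_1 \htpy h\circ\pi_2$, and then the claim follows from uniqueness of objects satisfying the universal property in \autoref{coeq}.

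The only subtle step is checking that $\eq{\pi_1}{\pi_2}(b,b')$ is a mere proposition, which requires using both that $A$ is a set and that $R$ is $\prop$-valued; everything else is a bookkeeping exercise with the basic constructors of the colimit. I do not expect any real obstacle.
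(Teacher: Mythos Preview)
Your proposal is correct. The paper's own justification is the one-liner preceding the lemma: a binary relation is equivalently a pair of projections, so the result follows from \autoref{coeq}. This is exactly your ``alternative'' approach via matching universal properties: a map from $\tfcolim(\pairr{A,R})$ into a set is a function $h:A\to C$ with $\prd{x,y}R(x,y)\to(\id{h(x)}{h(y)})$, which after currying is precisely $h\circ\pi_1\htpy h\circ\pi_2$.

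Your primary route---showing $\eq{\pi_1}{\pi_2}(b,b')\eqvsym R(b,b')$ by contracting the singletons $\sm{x:A}\id{x}{b}$ and then invoking univalence for mere propositions to identify the two graphs---is a slightly different, more object-level argument. It is perfectly fine and has the virtue of making explicit why the hypotheses ``$A$ is a set'' and ``$R$ is $\prop$-valued'' are used (to ensure $\eq{\pi_1}{\pi_2}$ lands in $\prop$ and matches $R$). The paper's route is shorter and avoids the appeal to propositional univalence at this particular spot, relying instead on the already-established universal property in \autoref{coeq}. Both approaches are straightforward; neither has hidden obstacles.
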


Using the induction principle of each $A/R$, we can extend the function
$\lam{\pairr{A,R}}A/R$ to a functor $Q$ from $\eqrel $ to $\catset $
in a canonical way. 

We check that quotients have the expected universal properties.

\begin{thm}\label{thm:set_EqRel}
The functor $Q$ is left adjoint to the inclusion 
$i:\catset \to \eqrel $. Thus, $\catset $ is a reflective
subcategory of $\eqrel $.
\end{thm}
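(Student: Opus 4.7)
The plan is to exhibit, for each setoid $\pairr{A,R}$ in $\eqrel$ and each set $Y$, a natural equivalence
\begin{equation*}
\eqv{(A/R\to Y)}{\mathsf{hom}_{\eqrel}(\pairr{A,R},i(Y))}.
\end{equation*}
Unfolding the definition of morphisms in $\eqrel$, and using that the relation carried by $i(Y)$ is $\idtypevar{Y}$, the right-hand side is the type $\sm{f:A\to Y}\prd{x,y:A}R(x,y)\to(\id{f(x)}{f(y)})$.

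First I invoke the preceding lemma, which identifies $A/R$ with the set-coequalizer of the projections $\pi_1,\pi_2:(\sm{x,y:A}R(x,y))\to A$. Applying the universal property of \autoref{coeq} with $B\defeq A$, $f\defeq\pi_1$, $g\defeq\pi_2$, and $C\defeq Y$ yields an equivalence
\begin{equation*}
\eqv{(A/R\to Y)}{\big(\sm{h:A\to Y}h\circ\pi_1\htpy h\circ\pi_2\big)}.
\end{equation*}
The homotopy $h\circ\pi_1\htpy h\circ\pi_2$ is by definition $\prd{w:\sm{x,y:A}R(x,y)}\id{h(\pi_1(w))}{h(\pi_2(w))}$, and uncurrying the underlying $\Sigma$-type---while noting that the codomain does not depend on the witness $r:R(x,y)$---gives the equivalent type $\prd{x,y:A}R(x,y)\to(\id{h(x)}{h(y)})$. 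Composing the two equivalences produces the desired pointwise bijection; its inverse sends a morphism $\pairr{f_0,f_1}$ of $\eqrel$ to the unique map $A/R\to Y$ whose precomposition with $c_R$ is $f_0$.

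To finish, I verify naturality in both variables. By construction, the bijection is implemented by precomposition with the quotient map $c_R:A\to A/R$, which is exactly the unit of the adjunction. Naturality in $Y$ is then just functoriality of post-composition on both sides, while naturality in $\pairr{A,R}$ reduces to the uniqueness clause of the coequalizer's universal property: two maps out of $A/R$ are equal as soon as they agree after precomposition with $c_R$. I do not expect a genuine obstacle; the argument is essentially a repackaging of \autoref{coeq} together with the standard equivalence $\eqv{\big(\prd{w:\sm{x:A}P(x)}C(w)\big)}{\big(\prd{x:A}{u:P(x)}C\pairr{x,u}\big)}$. The only mildly delicate step is the $\Sigma$-manipulation in the middle, which is entirely routine.
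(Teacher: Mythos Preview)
Your argument is correct and takes a genuinely different route from the paper's proof. You establish the adjunction via the natural hom-set bijection, reading it off directly from the coequalizer universal property of \autoref{coeq} together with the currying equivalence for $\Sigma$-types. The paper instead constructs the unit and counit explicitly and verifies the two triangle identities: the unit at $\pairr{A,R}$ is the pair $\pairr{c_R,p_R}$, the counit at $A$ is the inverse of the canonical equivalence $c_{\idtypevar{A}}:A\to A/\idtypevar{A}$, and the triangles are checked by unfolding these definitions.

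Your approach is shorter and more conceptual, since the universal property of the set-coequalizer already packages exactly the bijection you need; the only point worth making explicit is that the homotopy component $h\circ\pi_1\htpy h\circ\pi_2$ is a mere proposition because $Y$ is a set, so the contractibility statement in \autoref{coeq} really does give an equivalence of total spaces and not merely a bijection of underlying functions. The paper's approach, by contrast, makes the unit and counit visible as concrete data, which is convenient later when one wants to use them (for instance, the unit $c_R$ reappears immediately in the proof of effectiveness in \autoref{prop:sets_exact}), and it avoids having to argue naturality separately since the triangle identities encode it. Both are standard presentations of the same adjunction.
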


\begin{proof}
We have to show that there are
\begin{enumerate}
\item a unit $\eta:\mathbf{1}\to i\circ Q$.
\item and a counit $\varepsilon:Q\circ i\to\mathbf{1}$
\item satisfying the triangle identities 
\begin{equation*}
\id{\varepsilon_{A/R}\circ Q(\eta_{(A,R)})}{\refl{A/R}}
\qquad\text{and}\qquad
\id{i(\varepsilon_A)\circ \eta_{\pairr{A,\idtypevar{A}}}}{\refl{\pairr{A,\idtypevar{A}}}}.
\end{equation*}
\end{enumerate}
For the unit we take $\eta_{(A,R),\,0}\defeq  c_R$ and $\eta_{(A,R),\,1}\defeq p_R$, 
where $c_R$ is the coequalizer of
the pair $\pi_1,\pi_2:(\sm{x,y:A}R(x,y))\to A$ and where 
\begin{equation*}
p_R:\prd{x,y:A}R(x,y)\to (\id{c_R(x)}{c_R(y)})
\end{equation*}
is also a basic constructor
of $A/R$.

For the counit note that the canonical constructor 
$c_{\idtypevar{A}}:A\mapsto A/\idtypevar{A}$ of $A/\idtypevar{A}$ 
is an equivalence. Hence we define $\varepsilon_A:A/\idtypevar{A}\to A$ 
to be $c_{\idtypevar{A}}^{-1}$.

Note that $Q(\eta_{(A,R)}):A/R\to (A/R)/\idtypevar{A/R}$ is the unique map with the
the property that the square
\begin{equation*}
\begin{tikzcd}[column sep=large]
A \ar{r}{c_R} \dar[][swap]{c_R} & A/R \dar{c_{\idtypevar{A/R}}} \\ 
A/R \ar{r}[swap]{Q(\eta(A,R))} & (A/R)/\idtypevar{A/R}
\end{tikzcd}
\end{equation*}
and therefore we have a homotopy $Q(\eta_{(A,R)})\sim c_{\idtypevar{A/R}}$. By
definition we have $\varepsilon_{A/R}\defeq c_{\idtypevar{A/R}}^{-1}$, hence the
triangle identity $\id{\varepsilon_{A/R}\circ Q(\eta_{(A,R)})}
{\refl{A/R}}$ follows.

For the other triangle equality, note that the pair $i(\varepsilon_A)$ consists of
the function $i(\varepsilon_A)_0 \defeq  c_{\idtypevar{A}}^{-1}$ and the
function $i(\varepsilon_A)_1$ which is the canonical proof that equivalences
preserve path relations. We also have $\eta_{(A,\idtypevar{A})}$ given by the
function $\eta_{(A,\idtypevar{A}),\,0}\defeq c_{\idtypevar{A}}$ and the basic constructor
$\eta_{(A,\idtypevar{A}),\,1}\defeq  p_{\idtypevar{A}}$ witnessing that $\eta_{(A,R),\,0}$ preserves the path relation
Since $A/\idtypevar{A}$ is a set, it follows that $\eta_{(A,\idtypevar{A})}$ is just the
canonical proof that $\eta_{(A,\idtypevar{A})}$ preserves the path relation and
hence we get the other triangle equality.
\end{proof}

To prove that this is an equivalence of categories we would need to show that 
$\pairr{A/R,\idtypevar{A/R}}$ and $\pairr{A,R}$ are isomorphic setoids. 
The classical axiom of choice, $\choice{-1}$, is needed to obtain a section for the surjective
map $A\to A/R$.

\begin{defn}
A mere relation $R:A\to A\to\prop $ is said to be \emph{effective} if the square
\begin{equation*}
\begin{tikzcd}
\sm{x,y:A}R (x,y) \ar{r}{\pi_1} \ar{d}[swap]{\pi_2} & A \ar{d}{c_R} \\ 
A \ar{r}[swap]{c_R} & A/R
\end{tikzcd}
\end{equation*}
is a pullback square. 
\end{defn}

The following theorem uses univalence for mere propositions.

\begin{thm}\label{prop:sets_exact}
Suppose $\pairr{A,R}$ is an object of $\eqrel $. Then there is an
equivalence
\begin{equation*}
\eqv{(\id{c_R(x)}{c_R(y)})}{R(x,y)}
\end{equation*}
for any $x,y:A$. In other words, equivalence relations are effective.
\end{thm}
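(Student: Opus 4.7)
The plan is to build a family $P : A/R \to A/R \to \prop$ that refines $R$ along $c_R$, and then invoke \autoref{thm:h-set-refrel-in-paths-sets}. Concretely, I want $P(c_R(x), c_R(y))$ to be (equal, not merely equivalent to) $R(x,y)$, so that the induced equivalence between $P$ and the path relation on $A/R$ restricts on $c_R(x)$ and $c_R(y)$ to the desired equivalence $\eqv{(\id{c_R(x)}{c_R(y)})}{R(x,y)}$.

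First I would construct $P$ by double induction on $A/R$. Recall that $A/R$ is the set-quotient $\trunc{0}{\tfcolim(\pairr{A,R})}$, so in order to define a map $A/R \to \prop$ it is enough to exhibit a map $A \to \prop$ together with a proof that it respects $R$; and the codomain is allowed to be the $1$-type $\prop$ because of \autoref{lem:ntype_is_sntype} (indeed $\prop$ is a set by propositional univalence, which we have assumed). For fixed $y : A$, take $\lam x R(x,y) : A \to \prop$; the compatibility with $R$, namely $R(x,x') \to (\id[\prop]{R(x,y)}{R(x',y)})$, follows from symmetry and transitivity of $R$, which give a logical equivalence $R(x,y) \leftrightarrow R(x',y)$, which in turn gives an identification by univalence for mere propositions. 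This yields a function $P_0 : A/R \to A \to \prop$ with $P_0(c_R(x), y) \jdeq R(x,y)$. Now iterating the same idea in the second variable, using symmetry of $R$ so that $P_0(w,\blank)$ respects $R$, I get $P : A/R \to A/R \to \prop$ with $P(c_R(x), c_R(y)) \jdeq R(x,y)$.

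Next I check the hypotheses of \autoref{thm:h-set-refrel-in-paths-sets} for $P$ on the set $A/R$. Reflexivity $\prd{w:A/R} P(w,w)$ is proved by induction on $w$, reducing (since the goal is a mere proposition) to $\prd{x:A} R(x,x)$, which holds by $\rho$. For the implication $P(w,w') \to \id{w}{w'}$, I again reduce by double induction on $w,w'$ (both goals are mere propositions, so the path-constructor coherences are automatic) to showing $R(x,y) \to \id[A/R]{c_R(x)}{c_R(y)}$, and this is exactly the basic path constructor $p_R$ of the quotient witnessed in the proof of \autoref{thm:set_EqRel}.

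Finally, \autoref{thm:h-set-refrel-in-paths-sets} yields $\eqv{P(w,w')}{(\id[A/R]{w}{w'})}$ for all $w,w':A/R$; specializing to $w \jdeq c_R(x)$ and $w' \jdeq c_R(y)$ and using the judgemental equality $P(c_R(x), c_R(y)) \jdeq R(x,y)$ gives the desired equivalence. The only delicate step is the construction of $P$: I must carry out two nested quotient-recursions into $\prop$ and verify that the required compatibility with $R$ is a mere proposition (so that the coherence conditions of the recursion principle are free) and is obtained from the equivalence-relation structure together with propositional univalence.
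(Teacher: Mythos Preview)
Your proof is correct and follows the same overall strategy as the paper: extend $R$ to a relation on $A/R$ by double recursion into $\prop$ (using propositional univalence to make $\prop$ a set and the symmetry/transitivity of $R$ to handle the path constructors), and then show that this extended relation coincides with the identity type. The difference lies only in the second step. The paper argues directly that $\sm{w':A/R}\tilde{R}(w,w')$ is contractible for each $w$, by induction on $w$ with center $\pairr{c_R(x),\rho(x)}$ and then another induction to contract. You instead verify reflexivity of $P$ and the implication $P(w,w')\to(\id{w}{w'})$, and delegate the rest to \autoref{thm:h-set-refrel-in-paths-sets}. Since the proof of that theorem is exactly the contractibility-of-total-space argument, your route is a clean factoring of the paper's proof through an already-available lemma; it is slightly more economical and makes the roles of the equivalence-relation data more transparent (reflexivity supplies the center, the quotient's path constructor supplies the implication to identity).
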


\begin{proof}
We begin by extending $R$ to a mere relation $\tilde{R}:A/R\to A/R\to\prop $. After
the construction of $\tilde{R}$ we will show that $\tilde{R}$ is equivalent
to the identity type on $A/R$. We define $\tilde{R}$ by double induction on
$A/R$ (note that $\prop $ is a set by univalence for mere propositions). We
define $\tilde{R}(c_R(x),c_R(y)) \defeq  R(x,y)$. For $r:R(x,x')$ and $s:R(y,y')$,
the transitivity and symmetry 
of $R$ gives an equivalence from $R(x,y)$ to $R(x',y')$. This completes the
definition of $\tilde{R}$. To finish the proof of the statement, we need
to show that $\eqv{\tilde{R}(w,w')}{(\id{w}{w'})}$ for every $w,w':A/R$. We can
do this by showing that the type $\sm{w':A/R}\tilde{R}(w,w')$ is contractible for
each $w:A/R$. We do this by induction. Let $x:A$. We have the element 
$\pairr{c_R(x),\rho(x)}:\sm{w':A/R}\tilde{R}(c_R(x),w')$, where $\rho$ is
the reflexivity term of $R$, hence we only
have to show that
\begin{equation*}
\prd{w':A/R}{r:\tilde{R}(c_R(x),w')}\id{\pairr{w',r}}{\pairr{c_R(x),\rho(x)}},
\end{equation*}
which we do by induction on $w'$. Let $y:A$ and let $r:R(x,y)$. Then we have
the path $\opp{p_R(r)}:\id{c_R(y)}{c_R(x)}$. We automatically get a path from
$\id{\trans{\opp{p_R(r)}}{r}}{\rho(x)}$, finishing the proof.
\end{proof}

\subsection{Voevodsky's impredicative quotients}\label{sec:resizing}
A second construction of quotients is due to Voevodsky~\cite{pelayo2013preliminary}.
He defines the quotient $A/R $ as the type of equivalence classes of $R$, 
i.e.\ as the image of $R$ in $A\to\prop $. This gives a direct
construction of quotients, but it requires a resizing rule. In this section we treat Voevodsky's
construction of quotients, but we note up front that our result 
that $\catset $ is a $\piw$-pretopos does not rely on the material
presented here. Throughout this section
we assume that $\pairr{A,R}$ is an object of $\eqrel $.

\begin{defn}
A predicate $P:A\to\prop $ is said to be an \emph{equivalence class} with 
respect to $R $ if there is an element of type
\begin{equation*}
\mathsf{isEqClass}(R ,P)\defeq \sm{x:A}\prd{y:A}\eqv{R(x,y)}{P(y)}
\end{equation*}
\end{defn}

\begin{defn}\label{def:VVquotient}
We define
\begin{equation*}
A\sslash R \defeq \sm{P:A\to\prop }\brck{\mathsf{isEqClass}(R,P)}.
\end{equation*}
\end{defn}

Using univalence for mere propositions, the following is a consequence of the definition:

\begin{lem}
The type $A\sslash R $ is equivalent to $\im(R)$, the image of $R:A\to(A\to \prop)$.
\end{lem}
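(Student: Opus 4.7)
The plan is to show the equivalence pointwise in $P : A \to \prop$ and then sum, since both $\im(R)$ and $A\sslash R$ are $\Sigma$-types over the same base $A\to\prop$ with $(-1)$-truncated fibers. Explicitly, unfolding the definitions, the claim reduces to exhibiting, for each $P : A\to\prop$, an equivalence
\begin{equation*}
\eqv{\sbrck{\hfib{R}{P}}}{\brck{\mathsf{isEqClass}(R,P)}}.
\end{equation*}
Since both sides are mere propositions, it suffices to give a bi-implication, and in fact one gets a genuine equivalence between the underlying types before truncation.

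First I would unfold $\hfib{R}{P} \jdeq \sm{x:A}\id[A\to\prop]{R(x)}{P}$. Applying function extensionality, which is assumed throughout the paper, one has
\begin{equation*}
\eqv{(\id[A\to\prop]{R(x)}{P})}{\prd{y:A}(\id[\prop]{R(x,y)}{P(y)})}.
\end{equation*}
Next, univalence for mere propositions (explicitly listed as an assumption of the paper, and used earlier, e.g.\ in \autoref{prop:sets_exact}) gives $\eqv{(\id[\prop]{R(x,y)}{P(y)})}{(\eqv{R(x,y)}{P(y)})}$ for each $y$. Composing these pointwise equivalences under $\prd{y:A}$ yields an equivalence
\begin{equation*}
\eqv{(\id[A\to\prop]{R(x)}{P})}{\prd{y:A}\eqv{R(x,y)}{P(y)}}
\end{equation*}
and then summing over $x:A$ gives $\eqv{\hfib{R}{P}}{\mathsf{isEqClass}(R,P)}$.

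Finally, the $(-1)$-truncation is functorial and preserves equivalences, so we obtain the desired $\eqv{\sbrck{\hfib{R}{P}}}{\brck{\mathsf{isEqClass}(R,P)}}$. Taking $\Sigma$ over $P:A\to\prop$ (which preserves fiberwise equivalences) then yields
\begin{equation*}
\eqv{\im(R)}{A\sslash R}.
\end{equation*}
There is no real obstacle here: the entire argument is an unfolding of definitions combined with function extensionality and propositional univalence. The only point that deserves a moment's care is that propositional univalence is genuinely needed (as opposed to full univalence), since $R(x,y)$ and $P(y)$ live in $\prop$; this is exactly the hypothesis already in force in the paper.
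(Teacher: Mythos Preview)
Your proof is correct and is precisely the unfolding the paper has in mind: the paper gives no detailed argument, merely stating that ``using univalence for mere propositions, the following is a consequence of the definition,'' and your proposal spells out exactly that consequence via function extensionality and propositional univalence.
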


In \autoref{prop:images_are_coequalizers} we have shown that images are
coequalizers. In particular, we immediately get the coequalizer diagram
\begin{equation*}
\begin{tikzcd}
\sm{x,y:A}\id{R(x)}{R(y)} \ar[yshift=.7ex]{r}{\pi_1}
\ar[yshift=-.7ex]{r}[swap]{\pi_2} & A \ar{r} & A\sslash R
\end{tikzcd}
\end{equation*}
We can use this to show that any equivalence relation is effective. First, we
demonstrate that kernels (of functions between sets) are effective equivalence
relations. 

\begin{thm}\label{prop:kernels_are_effective}
Let $f:A\to B$ between any two sets. Then
the relation $\ker(f):A\to A\to\type$ given by 
$\ker(f,x,y)\defeq \id{f(x)}{f(y)}$ is effective. 
\end{thm}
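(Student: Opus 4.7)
The plan is to reduce effectiveness of $\ker(f)$ to the image factorization of $f$, exploiting that $i_f:\im(f)\to B$ is a monomorphism and that $\tilde f:A\to\im(f)$ already presents $\im(f)$ as the coequalizer of $\pi_1,\pi_2:(\sm{x,y:A}\id{f(x)}{f(y)})\to A$.

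First I would identify $A/\ker(f)$ with $\im(f)$. By definition, $A/\ker(f)$ is the set-coequalizer of the two projections from $\sm{x,y:A}\ker(f,x,y)\jdeq\sm{x,y:A}\id{f(x)}{f(y)}$ to $A$. By \autoref{prop:images_are_coequalizers}, applied to $f$, the same pair of projections is coequalized by $\tilde f:A\to\im(f)$, and $\tilde f$ is a coequalizer. Hence there is a canonical equivalence $A/\ker(f)\simeq\im(f)$ sending $c_{\ker(f)}$ to $\tilde f$, so it suffices to show that the square
\begin{equation*}
\begin{tikzcd}
\sm{x,y:A}\id{f(x)}{f(y)} \ar{r}{\pi_1} \ar{d}[swap]{\pi_2} & A \ar{d}{\tilde f} \\
A \ar{r}[swap]{\tilde f} & \im(f)
\end{tikzcd}
\end{equation*}
is a pullback.

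Next I would unfold the pullback $A\times_{\im(f)} A$ as $\sm{x,y:A}\id{\tilde f(x)}{\tilde f(y)}$ by \autoref{pb}. The key observation is that $i_f:\im(f)\to B$ is injective, because its fibres are $\sbrck{\hfib{f}{b}}$ which are mere propositions. Therefore $i_f$ is a monomorphism, so applying $\apfunc{i_f}$ yields for each $x,y:A$ an equivalence
\begin{equation*}
(\id[\im(f)]{\tilde f(x)}{\tilde f(y)})\;\simeq\;(\id[B]{i_f(\tilde f(x))}{i_f(\tilde f(y))})\;\jdeq\;(\id[B]{f(x)}{f(y)}).
\end{equation*}
Taking the total space over $\sm{x,y:A}$ on both sides produces an equivalence $\sm{x,y:A}\id{f(x)}{f(y)}\simeq A\times_{\im(f)} A$. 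One then checks that this equivalence commutes with the two projections to $A$, which is immediate from the definitions, giving the desired pullback square.

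The only mild obstacle is keeping the identifications straight: one must check that the equivalence $A/\ker(f)\simeq\im(f)$ produced by the universal property actually transports the cospan $(c_{\ker(f)},c_{\ker(f)})$ to $(\tilde f,\tilde f)$, and that the resulting equivalence between pullbacks is compatible with $\pi_1,\pi_2$. Both verifications reduce to checking commuting triangles that hold by construction, so I do not expect any genuinely hard step.
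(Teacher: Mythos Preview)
Your proposal is correct and follows essentially the same approach as the paper: both identify $A/\ker(f)$ with $\im(f)$ via \autoref{prop:images_are_coequalizers} and then establish the equivalence $(\id{\tilde f(x)}{\tilde f(y)})\simeq(\id{f(x)}{f(y)})$ using that the second component of $\im(f)$ lives in a mere proposition. The only cosmetic difference is that you phrase this last step as ``$i_f$ is a monomorphism, hence $\apfunc{i_f}$ is an equivalence on path spaces,'' whereas the paper computes the identity type of the $\Sigma$-type directly.
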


\begin{proof}
We will use \autoref{prop:images_are_coequalizers} which asserted that $\im(f)$ 
is the coequalizer of $\pi_1,\pi_2: (\sm{x,y:A}\id{f(x)}{f(y)})\to A$. This
gives an equivalence $\eqv{\im(f)}{A/\ker(f)}$. Note that the canonical kernel 
pair of the function 
$\tilde{f}$ defined in \autoref{defn:im-fact} consists 
of the two projections
\begin{equation*}
\pi_1,\pi_2:\big(\sm{x,y:A}\id{\tilde{f}(x)}{\tilde{f}(y)}\big)\to A.
\end{equation*}
For any $x,y:A$, we have equivalences
\begin{align*}
\id{\tilde{f}(x)}{\tilde{f}(y)} 
& \eqvsym 
\sm{p:\id{f(x)}{f(y)}}\id{\trans{p}{\sbproj{\pairr{x,\refl{f(x)}}}}}
{\sbproj{\pairr{y,\refl{f(y)}}}}\\ 
& \eqv{}{\id{f(x)}{f(y)}},
\end{align*}
where the last equivalence holds because 
$\sbrck{\hfib{f}b}$ is a mere proposition for any $b:B$. 
Therefore, we get that
\begin{equation*}
\eqv{\big(\sm{x,y:A}\id{\tilde{f}(x)}{\tilde{f}(y)}\big)}{\big(\sm{x,y:A}\id{f(x)}{f(y)}\big)}
\end{equation*}
and hence we may conclude that $\ker(f)$ is an effective relation.
\end{proof}

\begin{thm}
Equivalence relations are effective 
and $\eqv{A/R}{A\sslash R}$. 
\end{thm}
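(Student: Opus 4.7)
The plan is to reduce the theorem to \autoref{prop:kernels_are_effective} by viewing $R$ itself as a function and recognising that its kernel is, up to equivalence, $R$ again. Write $r : A \to (A \to \prop)$ for the curried form $r(x) \defeq R(x,\blank)$. Since $\prop$ is a set (by \autoref{lem:ntype_is_sntype}) and hence so is $A \to \prop$ (by \autoref{lem:hlevel_prod}), $r$ is a function between sets, so \autoref{prop:kernels_are_effective} applies to it.

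The first step is to show that $\ker(r)(x,y) \defeq \id{r(x)}{r(y)}$ is equivalent to $R(x,y)$. Function extensionality together with univalence for mere propositions gives
\begin{equation*}
\eqv{(\id{r(x)}{r(y)})}{\prd{z:A}\eqv{R(x,z)}{R(y,z)}}.
\end{equation*}
From $R(x,y)$, the symmetry and transitivity of $R$ produce such a pointwise equivalence. Conversely, specialising the pointwise equivalence at $z \defeq x$ yields $\eqv{R(x,x)}{R(y,x)}$, and reflexivity together with symmetry then yield $R(x,y)$.

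The second step is to exploit that, by the preceding lemma, $A \sslash R$ is equivalent to $\im(r)$, so by \autoref{prop:kernels_are_effective} the square
\begin{equation*}
\begin{tikzcd}
\sm{x,y:A}\id{r(x)}{r(y)} \ar[yshift=.7ex]{r}{\pi_1} \ar[yshift=-.7ex]{r}[swap]{\pi_2} & A \rar & A \sslash R
\end{tikzcd}
\end{equation*}
is both a coequaliser and a pullback. Turning the pointwise equivalence $\ker(r) \htpy R$ from step one into an honest identification via function extensionality and univalence for propositions, the same square becomes the effectiveness square for $R$. Hence $R$ is effective and the coequaliser $A/R$ is equivalent to $A \sslash R$.

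The main technical subtlety lies in the bookkeeping of the last step: we need to know that the coequaliser $A/R$ and the effectiveness square of $R$ depend only on $R$ up to pointwise equivalence. Both dependencies are resolved uniformly by upgrading the pointwise equivalence $\ker(r) \htpy R$ to a propositional identification $\id{\ker(r)}{R}$ using function extensionality and propositional univalence, after which transport along this path identifies the two squares and the two quotients.
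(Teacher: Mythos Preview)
Your proposal is correct and follows essentially the same route as the paper: both arguments identify $\id{R(x)}{R(y)}$ with $R(x,y)$ via function extensionality, propositional univalence, and the equivalence-relation laws, then invoke \autoref{prop:kernels_are_effective} for the map $R:A\to(A\to\prop)$ together with the identification $A\sslash R\eqvsym\im(R)$, and conclude by uniqueness of coequalizers. The only difference is cosmetic: you spell out the transport along $\id{\ker(r)}{R}$ that the paper compresses into the phrase ``since it is equivalent to an effective relation.''
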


\begin{proof}
We need to analyze the coequalizer diagram
\begin{equation*}
\begin{tikzcd}
\sm{x,y:A}\id{R(x)}{R(y)} \ar[yshift=.7ex]{r}{\pi_1}
\ar[yshift=-.7ex]{r}[swap]{\pi_2} & A \rar & A\sslash R
\end{tikzcd}
\end{equation*}
By function extensionality, the type $\id{R(x)}{R(y)}$ is equivalent to the type of homotopies from $R (x)$ to $R (y)$, which by 
the univalence axiom is equivalent to $\prd{z:A}\eqv{R(x,z)}{R(y,z)}$. Since $R $ is an equivalence relation, the latter type 
is equivalent to $R (x,y)$. To
summarize, we get that $\eqv{(\id{R(x)}{R(y)})}{R(x,y)}$, so $R $ is effective since it is equivalent to an effective relation. Also,
the diagram
\begin{equation*}
\begin{tikzcd}
\sm{x,y:A}R(x,y) \ar[yshift=.7ex]{r}{\pi_1}
\ar[yshift=-.7ex]{r}[swap]{\pi_2} & A \rar & A\sslash R
\end{tikzcd}
\end{equation*}
is a coequalizer diagram. Since coequalizers are unique up to equivalence, it follows that $\eqv{A/R}{A\sslash R}$.
\end{proof}

One may wonder about the predicative interpretation of the quotient constructions above.
One could argue that the construction using higher inductive types 
is predicative by considering the interpretation of this
quotient in the setoid model~\cite{Altenkirch1999,coquand2012constructive}. 
In this model the quotient does not raise the universe level. 
A similar observation holds for constructions that can be carried out in the groupoid 
model~\cite{hofmann1998groupoid}. These observations should suffice for the set-level higher inductive 
types we use in the present paper.

We have an inclusion $\prop_{\type_i}\to\prop_{\type_{i+1}}$. The assumption that this map is an equivalence is called the 
\emph{propositional resizing axiom}; see~\cite{TheBook}. This form of impredicativity would make Voevodsky's quotient small.

The following replacement axiom is derivable from the propositional resizing axiom; see~\cite{Universe-poly}.
\begin{lem}
Let $\type$ be a universe and $X:\type$. Let $f$ be a surjection of $X$ onto a set $Y$. Then there exists a $Z:\type$ 
which is
equivalent to $Y$.
\end{lem}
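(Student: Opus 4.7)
The plan is to realize $Y$ as a small quotient of $X$, using propositional resizing to shrink the kernel of $f$ down to a relation whose values are propositions in the universe $\type$. Say $Y$ lives in some universe $\type'$ potentially larger than $\type$. For each $x,y:X$ the identity type $\id[Y]{f(x)}{f(y)}$ is a mere proposition in $\type'$ because $Y$ is a set. Applying the propositional resizing axiom (together with function extensionality) yields a family $R:X\to X\to\prop$ valued in $\type$ together with, for each $x,y:X$, an equivalence $R(x,y)\eqvsym\id{f(x)}{f(y)}$.

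Reflexivity, symmetry and transitivity of $R$ transfer from those of the kernel across these equivalences, and since each is a mere proposition there is no coherence to worry about. We can therefore form the quotient $Z\defeq X/R$ via the higher inductive type of \autoref{colim}; because $X:\type$ and $R$ takes values in $\prop_\type$, the resulting $Z$ also lies in $\type$.

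It remains to exhibit an equivalence $\eqv{Z}{Y}$. By \autoref{prop:images_are_coequalizers} combined with the surjectivity of $f$, we have $\eqv{\im(f)}{Y}$, and $\im(f)$ is the coequalizer of $\pi_1,\pi_2:(\sm{x,y:X}\id{f(x)}{f(y)})\to X$. On the other hand, $Z$ is by construction (cf.\ \autoref{coeq}) the coequalizer of $\pi_1,\pi_2:(\sm{x,y:X}R(x,y))\to X$. The fibrewise equivalences $R(x,y)\eqvsym\id{f(x)}{f(y)}$ produce an equivalence of total spaces that commutes with both projections, so the universal property of coequalizers delivers $\eqv{Z}{\im(f)}$, and composing with the previous equivalence gives $\eqv{Z}{Y}$, witnessing $Z$ as the desired small copy of $Y$ in $\type$.

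The substantive step is the universe bookkeeping: one must justify that the resizing really produces a relation valued in $\prop_\type$ rather than in the propositions of some larger universe, so that the higher inductive quotient stays inside $\type$. Everything else is either a transport of structure across equivalences of mere propositions or a direct citation of the coequalizer results established earlier in \autoref{sec:quotient}.
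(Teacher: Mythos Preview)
Your argument is correct, but it takes a different route from the paper's. The paper stays within the theme of \S\ref{sec:resizing}: it defines $Z\defeq X\sslash\ker(f)$ using Voevodsky's quotient (\autoref{def:VVquotient}), i.e.\ the image of $\ker(f):X\to(X\to\prop)$, and invokes propositional resizing once, globally, to make $\prop$ itself small so that this image lands in $\type$. The equivalence $\eqv{Z}{Y}$ then follows from the chain $X\sslash\ker(f)\eqvsym X/\ker(f)\eqvsym\im(f)\eqvsym Y$ already established in that section. You instead apply resizing pointwise to the kernel to obtain a $\type$-small relation $R$, and then use the higher-inductive quotient $X/R$ from \S\ref{sec:quotients}; smallness of $Z$ comes from the HIT living in the same universe as its inputs, and the identification with $Y$ is argued via matching coequalizer diagrams. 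Both arguments are sound and rest on the same ingredients (resizing plus \autoref{prop:images_are_coequalizers}); the paper's version is shorter because the relevant equivalences are already on the table in \S\ref{sec:resizing}, while yours makes the universe bookkeeping for the HIT quotient more explicit.
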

\begin{proof}
Define $Z\defeq X\sslash\ker(f)$ using a map to the small mere propositions in \autoref{def:VVquotient}. 
Then $Z:\type$ and $\eqv{Z\defeq X/\ker(f)}{\im f}\eqv{}{Y}$.
\end{proof}

\subsection{The object classifier}\label{sec:object_classification}
One of the reasons that the definition of predicative topos in~\cite{vandenBerg2012} contains such a long
list of requirements is the absence of a small subobject classifier. Nevertheless, 
in Martin-L\"of type theory we have the possibility of considering
a tall hierarchy of universes nested in one another according to the ordering
of the hierarchy. While a universe with a subobject classifier would be impredicative,
there is no problem with subobject classifiers at higher universe levels, or simply
large subobject classifiers. 

In higher topos theory one considers not only subobject classifiers, 
which classify the monomorphisms, but also object classifiers which classify more general classes of maps.
In this section we will establish the existence of an internal analogue of such large object classifiers.

Moreover, we will find an $n$-object classifier for
every $n:\Nnegtwo$, where the $n$-object classifier will classify the
functions with $n$-truncated homotopy fibers. In \autoref{sec:prelim}
we saw that the monomorphisms are exactly the $(-1)$-truncated functions. Therefore,
the $(-1)$-object classifier will correspond to a (large) subobject
classifier.

In addition to the size issue of the object classifiers, we will see that the
$n$-object classifier will generally not be an \typele{n}, but an \typele{(n+1)}. 
This observation should be regarded in contrast to the theory of 
predicative toposes, where a universal small
map is required to exist. Such a universal small map is suggested to be a map
between sets, 
but it seems that within the current setting of homotopy type theory we cannot expect
such a map to exist. The main reason is that the universal small map of sets
will in general be a map of groupoids; a universal small map of groupoids will
in general be a map of $2$-groupoids, etc.

\begin{thm}\label{thm:nobject_classifier_appetizer}
For any type $B$ there is an equivalence
\begin{equation*}
\chi:\eqv{\big(\sm{A:\type}A\to B\big)}{B\to\type}.
\end{equation*}
Likewise, there is an equivalence
\begin{equation*}
\chi_n:\eqv{\big(\sm{A:\type}{f:A\to B}\prd{b:B}\istype{n}(\hfib{f}b)\big)}{(B\to \typele{n})}
\end{equation*}
for every $n:\Nnegtwo$.
\end{thm}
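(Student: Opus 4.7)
My plan is to construct mutually inverse maps and check both round-trips, using univalence and function extensionality.

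In one direction, I would define
\begin{equation*}
\chi(A,f) \defeq \lam{b} \hfib{f}{b} : B \to \type.
\end{equation*}
In the other direction, given $P : B \to \type$, define
\begin{equation*}
\chi^{-1}(P) \defeq \pairr{\sm{b:B}P(b),\, \proj1}.
\end{equation*}
The key computation is that for any $P : B \to \type$ and any $b:B$, the fiber $\hfib{\proj1}{b} \jdeq \sm{w:\sm{b':B}P(b')}(\id{\proj1 w}{b})$ is equivalent to $P(b)$, essentially by the contractibility of singletons (splitting the $\Sigma$-type and contracting away $\sm{b':B}(\id{b'}{b})$). Applying univalence pointwise and then function extensionality yields $\id[B\to\type]{\chi(\chi^{-1}(P))}{P}$.

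For the other round-trip, starting from $\pairr{A,f}$, I get the pair $\pairr{\sm{b:B}\hfib{f}{b},\,\proj1}$. Here I would first exhibit an equivalence $e : \eqv{\sm{b:B}\hfib{f}{b}}{A}$ by swapping the order of sums,
\begin{equation*}
\sm{b:B}\hfib{f}{b} \jdeq \sm{b:B}{a:A}(\id{f(a)}{b}) \eqvsym \sm{a:A}{b:B}(\id{f(a)}{b}) \eqvsym A,
\end{equation*}
the last step again by contractibility of singletons. Under this equivalence, the first projection $\proj1 : \sm{b:B}\hfib{f}{b} \to B$ is transported to $f$ (the identification being definitional once one unfolds the equivalences). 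Then univalence turns $e$ into an equality $\id[\type]{\sm{b:B}\hfib{f}{b}}{A}$, and the characterization of paths in $\Sigma$-types (applied to the $\Sigma$-type $\sm{A:\type}(A\to B)$) together with the fact that $\proj1$ transports to $f$ yields the required identification $\id{\chi^{-1}(\chi(A,f))}{\pairr{A,f}}$.

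For the $n$-truncated version $\chi_n$, the same two maps restrict: if $f$ has $n$-truncated fibers then $\chi(A,f)$ lands in $\typele{n}$, and conversely $\proj1 : \sm{b:B}P(b) \to B$ has fibers (equivalent to) $P(b)$, which are $n$-truncated whenever $P$ factors through $\typele{n}$. Because $\istype{n}(\blank)$ is a mere proposition (\autoref{lem:ntype_is_sntype}), the round-trip identifications from the untruncated case transport with no extra data required; one only has to check that the equivalence $\chi$ above restricts to an equivalence between the two given subtypes.

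The main obstacle is the second round-trip: one must be careful that, after invoking univalence, the transported first projection really does match $f$ up to a canonical homotopy, and that this homotopy assembles into the correct path in the $\Sigma$-type $\sm{A:\type}(A\to B)$. This is essentially the computation behind the fact that $\lam{A}(A\to B)$ is a functor and univalence is natural; once the equivalence $\sm{b:B}\hfib{f}{b} \simeq A$ is chosen so that $\proj1$ strictly equals $f \circ e$, the rest is a routine application of $\pairpath$.
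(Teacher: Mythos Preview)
Your proposal is correct and follows essentially the same approach as the paper: the same pair of inverse maps $\chi(A,f)=\hfib{f}{\blank}$ and $\psi(P)=\pairr{\sm{b:B}P(b),\proj1}$, the same two round-trip verifications via $\eqv{\hfib{\proj1}{b}}{P(b)}$ and $\eqv{\sm{b:B}\hfib{f}{b}}{A}$, and the same observation that the $n$-truncated case follows because $\istype{n}$ is a mere proposition. The only difference is cosmetic: the paper writes out $e^{-1}(a)=\pairr{f(a),a,\refl{f(a)}}$ explicitly to read off $\id{(\trans{e}{\proj1})(a)}{f(a)}$, which is precisely the ``main obstacle'' you flag and correctly resolve.
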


\begin{proof}
We begin by constructing the first equivalence, i.e.\ we have to construct functions
\begin{align*}
\chi & : (\sm{A:\type}A\to B)\to B\to\type\\
\psi & : (B\to\type)\to(\sm{A:\type}A\to B).
\end{align*}
The function $\chi$ is defined by $\chi(\pairr{A,f},b)\defeq \hfib{f}b$. The
function $\psi$ is defined by $\psi(P)\defeq \pairr{(\sm{b:B}P(b)),\proj1}$. Now
we have to verify that $\chi\circ\psi\sim\idfunc$ and that $\psi\circ\chi
\sim\idfunc$:
\begin{enumerate}
\item Let $P$ be a family of types over $B$. It is a basic fact 
(see Theorem 4.8.1 of~\cite{TheBook}) that 
$\eqv{\hfib{\proj1}{b}}{P(b)}$ and therefore it follows immediately
that $P\sim\chi(\psi(P))$.
\item Let $f:A\to B$ be a function. We have to find a path
\begin{equation*}
\id{\pairr{(\sm{b:B}\hfib{f}b),\proj1}}{\pairr{A,f}}
\end{equation*}
First note that we have the basic equivalence
$e:\eqv{(\sm{b:B}\hfib{f}b)}{A}$ with $e(b,a,p)\defeq a$ and $e^{-1}(a)
\defeq \pairr{f(a),a,\refl{f(a)}}$. It also follows that
$\id{\trans{e}{\proj1}}{\proj1\circ e^{-1}}$. From this, we immediately read off
that $\id{(\trans{e}{\proj1})(a)}{f(a)}$ for each $a:A$. 
\end{enumerate}
This completes the proof of the first of the asserted equivalences.

To find the second set of equivalences, note that if we restrict $\chi$ to
functions with $n$-truncated homotopy fibers we get a family of $n$-truncated
types. Likewise, if we restrict $\psi$ to a family of $n$-truncated types we get
a function with $n$-truncated homotopy fibers. To finish the proof we observe that truncatedness is a mere proposition, 
hence adding it as a restriction on both sides does not disturb the fact that the two functions are inverse equivalences.
\end{proof}

\begin{defn}
Define
\begin{equation*}
\pointed{\type}\defeq \sm{A:\type}A\qquad\text{and}\qquad \pointed{(\typele{n})}\defeq 
\sm{A:\typele{n}}A.
\end{equation*}
Thus, $\pointed{\type}$ stands for the \emph{pointed types} (by analogy with the pointed spaces) and $\pointed{(\typele{n})}$ stands for
the pointed $n$-types.
\end{defn}

The following theorem states that we have an object classifier.
\begin{thm}\label{thm:nobject_classifier}
Let $f:A\to B$ be a function. Then the diagram
\begin{equation*}
\begin{tikzcd}
A \ar{r}{\vartheta_f} \ar{d}[swap]{f} & \pointed{\type} \ar{d}{\proj1} \\ 
B \ar{r}[swap]{\chi_f} & \type
\end{tikzcd}
\end{equation*}
is a pullback diagram. Here, the function $\vartheta_f$ is defined by
\begin{equation*}
\lam{a}\pairr{\hfib{f}{f(a)},\pairr{a,\refl{f(a)}}}.
\end{equation*}
A similar statement holds when we replace $\type$ by $\typele{n}$.
\end{thm}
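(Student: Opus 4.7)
The plan is to prove that the canonical comparison map $A \to B \times_\type \pointed\type$ induced by $\vartheta_f$ and $f$ is an equivalence. Unfolding the pullback per \autoref{pb}, we have
\[
B \times_\type \pointed\type \;\jdeq\; \sum_{b:B}\sum_{X:\type}\sum_{x:X} (\hfib{f}{b} = X),
\]
and the comparison map sends $a$ to $\pairr{f(a),\hfib{f}{f(a)},\pairr{a,\refl{f(a)}},\refl{\hfib{f}{f(a)}}}$.

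Using univalence, I will replace the identity $\hfib{f}{b} = X$ by the equivalence type $\eqv{\hfib{f}{b}}{X}$ and then reassociate the $\Sigma$'s into $\sum_{b:B}\sum_{X:\type} \eqv{\hfib{f}{b}}{X} \times X$. Next I invoke the standard consequence of univalence that for any fixed $Y:\type$, the based path-space $\sum_{X:\type} \eqv{Y}{X}$ is contractible with center $\pairr{Y,\idfunc}$. Because projecting away a contractible factor is an equivalence, the total space $\sum_{X:\type}\eqv{Y}{X}\times X$ is equivalent to $Y$ itself, via $\pairr{X,e,x} \mapsto e^{-1}(x)$. Applying this fiberwise over $b:B$ with $Y\defeq \hfib{f}{b}$ and composing with the standard equivalence $\eqv{\bigl(\sum_{b:B}\hfib{f}{b}\bigr)}{A}$ yields an equivalence $\eqv{(B\times_\type \pointed\type)}{A}$ whose inverse is, up to a homotopy, the comparison map. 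The truncated version goes through verbatim with $\type$ replaced by $\typele{n}$ and $\pointed\type$ by $\pointed{(\typele{n})}$: the only extra check is that the contractible based path-space used in the contraction step lives already in $\typele{n}$, which holds because truncatedness is preserved under equivalence and we only quantify over $X$ equivalent to the $n$-truncated type $\hfib{f}{b}$.

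The main obstacle is the bookkeeping in the second step above: verifying that the composite of all these equivalences agrees with the concrete comparison map built from $\vartheta_f$, rather than just producing \emph{some} equivalence from $A$ into the pullback. A cleaner alternative, which I may adopt, is to use the criterion that a commuting square is a pullback exactly when the induced map on homotopy fibers over each $b:B$ is an equivalence. Under this approach one computes once and for all that the fiber $\hfib{\proj1}{Y}$ of $\proj1 : \pointed\type \to \type$ over any $Y$ is equivalent to $Y$, by the very same contraction argument as above; then the induced map $\hfib{f}{b} \to \hfib{\proj1}{\chi_f(b)}$ becomes, under this identification, the identity on $\hfib{f}{b}$, hence an equivalence. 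This isolates the only nontrivial calculation to the single lemma $\eqv{\hfib{\proj1}{Y}}{Y}$ and sidesteps manipulation of the entire pullback type, which is also the form needed for the $n$-truncated statement.
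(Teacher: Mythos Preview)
Your first approach is essentially the paper's proof: the paper constructs the same chain of equivalences
\[
A \;\eqvsym\; \sm{b:B}\hfib{f}{b} \;\eqvsym\; \sm{b:B}{X:\type}{p:\id{\hfib{f}{b}}{X}}X \;\eqvsym\; B\times_{\type}\pointed{\type}
\]
and then traces the composite elementwise to check that it agrees with the comparison map induced by $f$ and $\vartheta_f$, which is exactly the ``bookkeeping'' you flag as the main obstacle. One small simplification over what you wrote: there is no need to invoke univalence to replace $\id{\hfib{f}{b}}{X}$ by $\eqv{\hfib{f}{b}}{X}$; the paper just uses that the based path space $\sm{X:\type}(\id{Y}{X})$ is contractible by path induction, so the middle step collapses without ever passing to equivalences. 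Your detour through univalence is correct but unnecessary for this particular square.

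Your alternative via the fiberwise pullback criterion and the lemma $\eqv{\hfib{\proj1}{Y}}{Y}$ is a genuinely different packaging that the paper does not use. It has the advantage you note: it localizes the computation to a single reusable fact about $\proj1:\pointed{\type}\to\type$ and avoids chasing an element through a four-fold $\Sigma$-type. The paper's approach, on the other hand, makes the equivalence $\eqv{A}{B\times_\type\pointed{\type}}$ and its compatibility with $f$ and $\vartheta_f$ completely explicit in one pass. Either route handles the $\typele{n}$ case by the same argument, since the contractibility of $\sm{X:\typele{n}}(\id{Y}{X})$ is again just based path space contractibility.
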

\begin{proof}
Note that we have the equivalences
\begin{align*}
A & \eqv{}{\sm{b:B}\hfib{f}b}\\
 & \eqv{}{\sm{b:B}{X:\type}{p:\id{\hfib{f}b}{X}}X}\\
 & \eqv{}{\sm{b:B}{X:\type}{x:X}\id{\hfib{f}b}{X}}\\
 & \jdeq B\times_{\type}\pointed{\type}.
\end{align*}
which gives us a composite equivalence $e:\eqv{A}{B\times_\type\pointed{\type}}$. 
We may display the action of this composite equivalence step by step by
\begin{align*}
a & \mapsto \pairr{f(a),\pairr{a,\refl{f(a)}}}\\
 & \mapsto \pairr{f(a),\hfib{f}{f(a)},\refl{\hfib{f}{f(a)}},\pairr{a,\refl{f(a)}}}\\
 & \mapsto \pairr{f(a),\hfib{f}{f(a)},\pairr{a,\refl{f(a)}},\refl{\hfib{f}{f(a)}}}
\end{align*}
Therefore, we get homotopies $f\sim\pi_1\circ e$ and $\vartheta_f\sim \pi_2\circ e$. 
\end{proof}

We include the following lemma because the domain of the subobject classifier
is usually the terminal object.

\begin{lem}\label{lem:subobject}
The type $\pointed{\prop}$ is contractible.
\end{lem}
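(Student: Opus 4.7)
The plan is to show that $\pointed{\prop} \jdeq \sm{P:\prop}P$ has the element $\pairr{\unit,\ttt}$ as a center of contraction. The key observation is that an inhabited mere proposition is contractible, hence equivalent to $\unit$, so every element of $\pointed{\prop}$ will be identified with $\pairr{\unit,\ttt}$ by univalence.

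First I would take the proposed center of contraction to be $\pairr{\unit,\ttt}$, noting that $\unit$ is a mere proposition. Then, given any $\pairr{P,p}:\pointed{\prop}$, the goal is to produce a path $\id{\pairr{\unit,\ttt}}{\pairr{P,p}}$. By the characterization of paths in $\Sigma$-types and the fact that being a proposition is itself a mere proposition (so that $\pairr{A,\blank}:\prop$ is determined by $A$), this reduces to giving a path $\id[\type]{\unit}{P}$ together with a proof that the transported $\ttt$ equals $p$; but since $P$ is a mere proposition the second obligation is automatic.

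To produce the path $\id[\type]{\unit}{P}$, I would invoke the univalence axiom: it suffices to exhibit an equivalence $\eqv{\unit}{P}$. The function $\lam{x}p:\unit\to P$ is such an equivalence because $P$ is contractible whenever it is an inhabited mere proposition (its fibers over any $y:P$ are $\id{p}{y}$, which is contractible since $P$ is a proposition, and the fiber structure over $\ttt$ is similarly contractible). Thus univalence yields the required identification in $\type$, which then lifts to an identification in $\prop$ using \autoref{lem:ntype_is_sntype} (that $\istype{n}$ is a mere proposition).

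There is no real obstacle here; the only subtlety is bookkeeping between $\prop$ and $\type$, which is handled by the fact that $\istype{-1}$ is a mere proposition so equalities in $\prop$ are equivalent to equalities of the underlying types. The whole proof amounts to: center $\defeq\pairr{\unit,\ttt}$; contraction given by univalence applied to $\lam{\ttt}p$.
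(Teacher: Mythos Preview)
Your proposal is correct and follows essentially the same approach as the paper: take $\pairr{\unit,\ttt}$ as center, observe that any inhabited mere proposition is contractible and hence equivalent to $\unit$, and apply univalence to obtain the required identification. The paper's proof is terser, omitting the explicit bookkeeping about paths in $\Sigma$-types and the mere-propositionality of $\istype{-1}$ that you spell out, but the argument is the same.
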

\begin{proof}
Suppose that $\pairr{P,u}$ is an element of $\sm{P:\prop}P$. Then we have $u:P$ and hence there is an element of type $\iscontr (P)$. It
follows that $\eqv{P}{\unit}$ and therefore we get from the univalence axiom that there is a path
$\id{\pairr{P,u}}{\pairr{\unit,\ttt }}$.
\end{proof}

\subsection{$\catset$ is a \texorpdfstring{$\Pi$}{Π}$\mathsf{W}$-pretopos}
We assume the existence of the higher inductive types for truncation and quotients. The univalence axiom is used in
\autoref{epis-surj}, but not to prove that surjections are epimorphisms. We do use propositional univalence in
\autoref{prop:images_are_coequalizers}.
\begin{thm} The category $\catset$ is a \piw-pretopos.
\end{thm}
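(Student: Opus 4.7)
The plan is to assemble the pieces from the preceding sections and check the four ingredients of a $\piw$-pretopos: (a) $\catset$ is a pretopos (i.e.\ an exact, lextensive category); (b) it is locally cartesian closed; and (c) it is closed under $\mathsf{W}$-types. Since "$\piw$-pretopos" means a locally cartesian closed pretopos with disjoint stable sums and $\mathsf{W}$-types, this is the natural decomposition.

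First I would recall that $\catset$ has all finite limits: it has a terminal object $\unit$ and pullbacks (as verified in the proof of \autoref{thm:set_regular} using \autoref{lem:hlevel_prod} to see that the pullback of sets is again a set). Regularity is \autoref{thm:set_regular}, and effectiveness of equivalence relations is \autoref{prop:sets_exact}; together these give that $\catset$ is exact. For extensivity I would invoke \autoref{lem:sumsdisjount}, which states that binary coproducts in $\catset$ are disjoint, combined with \autoref{prop:sums_stable} which gives stability of coproducts under pullback. Since $\catset$ already has the relevant finite colimits (coproducts $X+Y$ are sets by the obvious induction, and an initial object $\emptyt$ is available and strict), this establishes lextensivity, hence the pretopos structure.

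Next I would verify local cartesian closure. Given $f:B\to A$ in $\catset$, the dependent product of a family $P:B\to\set$ along $f$ is provided type-theoretically by $\prd{b:\hfib{f}{a}}P(b)$, and \autoref{lem:hlevel_prod} ensures the resulting fibres are again sets. Combined with the identification of morphisms in $\catset/A$ as families indexed by $A$ (which follows from \autoref{thm:nobject_classifier_appetizer} when restricted to $\set$-valued families), this gives right adjoints to pullback functors $f^*:\catset/A\to\catset/B$, hence local cartesian closure.

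Finally, for $\mathsf{W}$-types, given $f:B\to A$ in $\catset$ the polynomial endofunctor on $\catset$ associated to $f$ has as initial algebra the well-ordered type $\wtype{a:A}f^{-1}(a)$, and \autoref{lem:hlevel_w} ensures this is a set whenever $A$ is. The elimination principle of $\mathsf{W}$-types gives the required initiality. I do not expect any real obstacle here, since the substantial work has been done in the earlier sections; the only subtlety worth flagging is verifying that the $\mathsf{W}$-type construction really yields an \emph{initial} algebra in $\catset$ rather than merely a weakly initial one, which follows from the computation rules combined with function extensionality (and would be the single step most deserving of a careful reference into \cite{TheBook}).
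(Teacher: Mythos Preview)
Your proposal is correct and follows essentially the same approach as the paper: you assemble lextensivity from \autoref{lem:sumsdisjount} and \autoref{prop:sums_stable}, exactness from \autoref{thm:set_regular} and \autoref{prop:sets_exact}, local cartesian closure from $\Pi$-types and \autoref{lem:hlevel_prod}, and $\mathsf{W}$-types from \autoref{lem:hlevel_w}, which is precisely the paper's decomposition. The only minor difference is that the paper cites an external reference for the passage from $\Pi$-types plus function extensionality to local cartesian closure rather than invoking \autoref{thm:nobject_classifier_appetizer}, and it does not pause over the initiality of $\mathsf{W}$-types.
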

\begin{proof}
We have an initial object, disjoint finite sums (\autoref{lem:sumsdisjount}), and finite limits 
(\autoref{pb}). Sums are stable under pullback; see \autoref{prop:sums_stable}. So, $\catset$ is
lextensive. $\catset$ is locally cartesian closed. This follows from the preparations we made in \autoref{sec:prelim}, using the
fact that the existence of $\Pi$-types (and functional extensionality) gives local cartesian closure 
e.g.~\cite[Prop.~1.9.8]{jacobs1999categorical}.
The category $\catset$ is regular (\autoref{thm:set_regular}) and quotients are effective (\autoref{prop:sets_exact}). We thus have an
exact
category, since it is also lextensive, we have a pretopos. It has $\Pi$-types (\autoref{lem:hlevel_prod}) and W-types
(\autoref{lem:hlevel_w}), so we have a $\piw$-pretopos. 
\end{proof}

We conclude that the only thing which prevents $\catset$ from being a topos is
a small subobject classifier. $\prop$ is large, but if we
assume the resizing rules from \autoref{sec:resizing}, then $\prop$ becomes small. We have seen in
\autoref{thm:nobject_classifier} that it satisfies the properties of a subobject classifier and hence we actually
obtain a topos.

\section{Choice and collection axioms}\label{sec:multiplechoice}

In this section we study two axioms, the axiom of collection and the axiom of multiple choice, 
that extend beyond the basic set theory. It seems that these
axioms are not provable in homotopy type theory, but we investigate their relationship with the other axioms.
As a first attempt, we would try to investigate these axioms in the framework for algebraic set theory. However, in homotopy type theory 
the universe of sets is a
groupoid and hence none of the $1$-categorical frameworks quite satisfy our needs. Thus we follow algebraic set theory only loosely. We will
work in the naive `$\infty$-category' of types, but observe that most of the constructions either deal with sets or with maps that have
set-fibers. In these cases the constructions align with the constructions in the $1$-category $\catset$. In particular, we use $(-1)$-connected map
for cover and use (homotopy) pullbacks. We note that such a pullback between sets reduces to the $1$-pullback in the
$1$-category $\catset$.

In algebraic set theory one considers a category of classes and isolates the sets within them. In the present paper, 
next to this size issue, we are mostly concerned with the dimension, the complexity of the equality.
Although the results in this section may not be spectacular, we record to what extent the seemingly natural framework of 
algebraic set theory works and where it breaks down.

\subsection{Stable classes of maps}
\begin{defn}\label{defn:small_maps}
A class $\smal:\prd{X,Y:\type}(X\to Y)\to\prop$ is \emph{stable} (\cite[Def.\ 3.1]{MoerdijkPalmgren2002}) if it satisfies:
\begin{description}
\item[pullback stability] In a pullback diagram as below, we have $\smal(f)\to\smal(g)$.
\begin{equation*}
\begin{tikzcd}
X \arrow{r}{} \arrow{d}[swap]{g} & A \arrow{d}{f} \\
Y \arrow{r}[swap]{h} & B
\end{tikzcd}
\end{equation*}
\item[Descent] If $h$ in the above diagram is surjective, then $\smal(g)\to\smal(f)$. 
\item[Sum] If $\smal(f)$ and $\smal(g)$, then $\smal(f+g)$. Here $f+g$ is the obvious map between the disjoint sums.
\end{description}
A class of stable maps is \emph{locally full}~\cite[3.2]{MoerdijkPalmgren2002}
if for all $g:X\to Y$ and $f:Y\to Z$ for which $\smal(f)$ holds, 
we have $\smal(g)$ if and only if $\smal(f\circ g)$.

A class of maps $\smal$ is called a \emph{class of small maps} if it is stable, locally full and for each $X$, the category
$\smal_X$ --- the small maps over $X$ --- forms a $\piw$-pretopos, see~\cite[3.3]{MoerdijkPalmgren2002}.
\end{defn}

\begin{thm}\label{thm:small}
 The class of set-fibered maps is a class of small maps.
\end{thm}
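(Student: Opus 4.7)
The plan is to verify the three components of stability (pullback stability, descent, sum), then local fullness, and finally that each $\smal_X$ is a $\piw$-pretopos. The first four conditions should follow quickly from the pointwise behaviour of fibers, while the last reduces via the $0$-object classifier to a pointwise argument in $X\to\set$.

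For pullback stability, I plan to use the pasting lemma for pullbacks (as already used in \autoref{lem:pb_of_coeq_is_coeq}) to obtain $\eqv{\hfib{g}{y}}{\hfib{f}{h(y)}}$ for each $y:Y$, and then invoke \autoref{ntype-closed-begin} to conclude. For descent, with $h$ surjective, I will exploit the fact that being a set is a mere proposition (\autoref{lem:ntype_is_sntype}): to prove $\hfib{f}{b}$ is a set I can pick a witness $\pairr{y,q}:\hfib{h}{b}$ from the surjectivity of $h$, and then transport the pullback equivalence along $q$. For sums, \autoref{lem:sumsdisjount} identifies each fiber of $f+g$ with a fiber of either $f$ or $g$, so the result is immediate.

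For local fullness, with $\smal(f)$ in hand, I will use the standard equivalence
\begin{equation*}
\eqv{\hfib{f\circ g}{z}}{\sm{w:\hfib{f}{z}}\hfib{g}{\proj1(w)}}
\end{equation*}
to handle both implications in one step. If $\smal(g)$, the right-hand side is a $\Sigma$-type of sets over a set, hence a set, so $\smal(f\circ g)$. Conversely, if $\smal(f\circ g)$, fix $y:Y$ and set $z\defeq f(y)$; the first projection from the right-hand side down to $\hfib{f}{z}$ has set total space and set base, so each of its fibers is again a set, and in particular the one at $\pairr{y,\refl{f(y)}}$, which is exactly $\hfib{g}{y}$.

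For the last condition I plan to invoke the $0$-object classifier (\autoref{thm:nobject_classifier_appetizer} with $n=0$) to obtain an equivalence between $\smal_X$ and $X\to\set$, under which morphisms translate into pointwise families of functions. The $\piw$-pretopos structure on $\set$ then transports pointwise: finite limits and colimits, image factorizations, coproducts, effective quotients of equivalence relations, dependent products, and $\mathsf{W}$-types are all defined fiberwise, with universal properties inherited from $\set$ via function extensionality. The main obstacle is this final step: it requires unpacking each component of the $\piw$-pretopos structure and checking its pointwise extension, which is conceptually routine once the correspondence with $X\to\set$ is in place but involves checking a long list of conditions. The first four clauses, by contrast, are immediate from the closure properties of sets recorded in \autoref{sec:prelim}.
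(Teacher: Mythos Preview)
Your proposal is correct and follows essentially the same approach as the paper: stability is handled fiberwise, local fullness via the fiber decomposition of a composite (the paper phrases the converse direction as a pullback-pasting argument exhibiting $\hfib{g}{y}$ as a pullback of sets, which amounts to your observation that fibers of a projection with set total space and set base are sets), and the $\piw$-pretopos structure on $\smal_X$ via the object classifier and the results of \autoref{sec:quotient}. The paper is terser throughout, simply asserting stability and deferring the pointwise $\piw$-pretopos verification to \autoref{sec:quotient}, but the content is the same.
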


\begin{proof}
The class of maps with set-fibers is stable (it even has dependent sums).

We prove the claim that the class of set-fibered maps is locally full. Consider
$g:X\to Y$ and a set-fibered map $f:Y\to Z$. If $g$ has set-fibers, then $f\circ g$ has set-fibers, as sets are
closed under $\Sigma$-types. For the converse, let $y:Y$. Note that $\hfib{g}{y}$
fits in the following diagram
\begin{equation*}
\begin{tikzcd}
\hfib{g}{y} \ar{r} \ar{d} & \hfib{f\circ g}{f(y)} \ar{r} \ar{d} & X \ar{d}{f} \\
\unit \ar{r} & \hfib{f}{f(y)} \ar{r} & Y
\end{tikzcd}
\end{equation*}
in which the outer rectangle and the square on the right are pullbacks. Hence
the square on the left is a pullback too. So we see that $\hfib{g}{y}$ is a
pullback of sets and consequently it is a set itself.

By the use of the object classifier, \autoref{thm:nobject_classifier}, we see that the category $\smal_X$ is equivalent
to the category of sets in context $X$ which forms a $\piw$-pretopos; see \autoref{sec:quotient}.
\end{proof}

\subsection{Representable classes of small maps}
\begin{defn}
A commuting diagram of the form 
\begin{equation*}
\begin{tikzcd}
X \arrow{r}{} \arrow{d}[swap]{g} & A \arrow{d}{f}\\
Y \arrow{r}[swap]{p} & B
\end{tikzcd}
\end{equation*}
is said to be a \emph{quasi-pullback} if the corresponding map from $X$ to $Y\times_B A$ is surjective. A quasi-pullback square in which
$p$ is surjective is said to be a \emph{covering square}. In this case, we say that $f$ is covered by $g$.
\end{defn}

\begin{defn}
A stable class $\smal$ of maps is said to be \emph{representable} if there exists a function $\pi:E\to U$
for which $\smal(\pi)$ holds and such that every function $f:A\to B$ satisfying $\smal(f)$ is covered by a pullback
of $\pi$. More explicitly, the latter condition means that we can fit $f$ in a diagram of the form
\begin{equation}\label{eq:small_repr}
\begin{tikzcd}
A \ar{d}[swap]{f} & X\times_U E \ar[->>]{l}[swap]{p_1} \ar{r}{} \ar{d}{\pi_1} & E \ar{d}{\pi} \\
B & X \ar[->>]{l}{p_0} \ar{r}[swap]{\chi} & U
\end{tikzcd}
\end{equation}
where the left hand square is covering and the square on the right is a pullback. 
\end{defn}

By \autoref{thm:nobject_classifier}, the class of set-fibered maps (in a universe $\type$) is representable (by a function from a larger 
universe). Moreover, we can take the left hand square to be the identity.

\subsection{The collection axiom}
\begin{defn}
A covering square
\begin{equation*}
\begin{tikzcd}
D \ar{r}{q} \ar{d}[swap]{g} & B \ar{d}{f} \\
C \ar{r}[swap]{p} & A
\end{tikzcd}
\end{equation*}
is said to be a \emph{collection square} if for any $a:A$, $E:\type$ and any surjective function $e:E\twoheadrightarrow\hfib{f}{a}$,
\begin{align*}
\exis{c:\hfib{p}{a}}{t:\hfib{g}{c}\to E}e\circ t\sim q_c;
\end{align*}
where $q_c$ is the restriction of $q$ to $\hfib{g}{a}$.
\end{defn}

\begin{defn}
Let $\smal$ be a class of small maps. The \emph{collection axiom} is the statement
$\mathsf{CA}(\smal)$ that for any small map $f:A\to X$ and any surjection $p:C\to A$ 
from a set $C$ there is a quasi-pullback diagram of the form
\begin{equation*}
\begin{tikzcd}
B \ar{r}{} \ar{d}[swap]{g} & C \ar[->>]{r}{p} & A \ar{d}{f} \\
Y \ar[->>]{rr}{} & & X
\end{tikzcd}
\end{equation*}
in which $g$ is a small map and the bottom map is surjective.
\end{defn}

Since we have an object classifier, we can replace maps with types in a context. After this transformation, the collection
axiom becomes:
\begin{equation*}
(\fall{a:A}\exis{c:C}R(a,c))\to\exis{B:\smal}{f:B\to C}\fall{a:A}\exis{b:B}R(a,f(b))
\end{equation*}
where $C$ is a set, $A:\smal$ and $R:A\to C\to\prop$.

This axiom is often included in the axioms for algebraic set theory and hence in predicative topos theory.
It seems unlikely that this axiom is provable in homotopy type theory, simply because none of its axioms seem applicable. 
In the constructive set theory CZF~\cite{aczel2001notes}, unlike in classical Zermelo set theory, the collection axiom is \emph{stronger}
than the replacement axiom. The replacement axiom \emph{is} derivable from the resizing rules; see \autoref{sec:resizing}.
In line with Voevodsky's proposal to add resizing rules to homotopy type theory, one could also consider its extension with the
collection axiom.

An cumulative hierarchy of sets may be defined using higher inductive types (see \cite{TheBook}). 
The induced set theory does satisfy the replacement axiom.

\subsection{The axiom of multiple choice}

\begin{defn}
The axiom of multiple choice (AMC) is the statement that any function fits on the right in a collection square.
\end{defn}

The axiom of multiple choice implies the collection axiom \cite[Prop.4.3]{MoerdijkPalmgren2002}. Conversely, by the existence of 
the object classifier, the collection axiom implies AMC. This follows from a small adaptation of \cite[Thm 4.3]{vandenBerg2012}.

It seems difficult to derive AMC, or equivalently the collection axiom, in a univalent type theory, even
when we add resizing rules. One possible route towards a counter model would be
to construct the Kan simplicial set model in ZF (without choice) and use the
fact that ZF does not prove AMC~\cite{vandenBerg2012}. However, this is beyond
the scope of this article.

\subsection{Projective covers}

The 0-truncated types in the cubical set model are precisely the setoids. Considering the cubical model inside an extensional type theory 
with a propositions-as-types interpretation, every set has a projective cover~\cite{van2009three}. The same holds if we have 
the axiom of choice in the meta-theory. Both the collection axiom \cite[5.2]{JoyalMoerdijk1995} and AMC follow from this 
axiom~\cite{MoerdijkPalmgren2002}. On the other hand~\cite[Prop.~11.2]{rezk2010toposes}, the $0$-truncation of a model topos of simplicial 
sheaves on a site is the topos of sheaves on that site. Hence, if we start with a topos without countable choice, we cannot have projective 
covers in the cubical sets model. This suggests investigating homotopy type theory with and without this axiom.
It is argued in~\cite{vandenBerg2012} that we need AMC to obtain a model theory for predicative toposes with good closure properties, e.g.\ 
closure under sheaf models. Concretely, AMC is used to show that W-types are small in sheaf models, but also to show that every internal 
site is presentable. It would be interesting to reconsider the latter of these issues in the presence of higher inductive types and the univalence axiom.

\section{Conclusion and outlook}

Our work is a contribution to the program of providing an elementary (first order) definition of an $\infty$-topos as conjectured models of
univalent homotopy type theory with higher inductive types~\cite{shulman2012univalence}. 
One would hope that many of the constructions that apply to predicative toposes (sheaves, realizability, gluing, ...) can be extended to 
homotopy type theory. By showing that $\catset$s form a predicative topos, we make a small step in this direction. Our result may be compared 
to e.g.~Proposition 11.2 in Rezk~\cite{rezk2010toposes}: the $0$-truncation of a model topos of simplicial sheaves on a site is the topos of 
sheaves on that site. Shulman~\cite{shulman2012univalence} shows that univalence is stable under gluing.

Moreover, this research program should contribute to a better understanding of the model theory of type theory based 
proof assistants such as Coq~\cite{Coq} and agda~\cite{agda} improving the set theoretical models; e.g.~\cite{werner1997sets}.
These proof assistants currently lack subset types, quotient types, functional extensionality, 
proof irrelevance for mere propositions, etc. As a first step one could provide a type theory which internalizes the setoid 
model~\cite{Altenkirch1999}. Unfortunately, this approach does not provide a proper treatment of the universe, as it is not a \typele{0}. 
Univalent homotopy type theory, considered as the internal type theory of simplicial, or cubical, sets, may be seen as an extension of this 
approach to include higher dimensional types such as universes.
As we have shown, homotopy type theory features: $\catset$ as predicative topos with $\prop$ as a (large) subobject classifier and the 
universe as an object classifier. So, it should facilitate the
formalization of mathematics, especially now that a computational interpretation of the univalence
axiom has been verified in a model~\cite{coquand2012constructive,Cubical}.
The semantics of small induction-recursion~\cite{malatesta2012small} depends on the set theoretic equivalence of the functor 
category and the slice category. The object classifier in \autoref{sec:object_classification} internalizes this. Moreover, 
it captures a similar kind of smallness; e.g.\ \autoref{thm:small}.


\paragraph*{Thanks}
Most of this paper was written when we were at the Institute for Advanced Study for the special year on Univalent Foundations.
We are greatly thankful to the participants of this year, especially to Steve Awodey, Thierry Coquand, Peter Lumsdaine, Mike Shulman and 
Vladimir Voevodsky.
The suggestions by the referees helped to greatly improve the presentation of the paper.



\bibliographystyle{plainurl}
\bibliography{refs}

\begin{thebibliography}{10}

\bibitem{aczel2002collection}
P.~Aczel and N.~Gambino.
\newblock Collection principles in dependent type theory.
\newblock In {\em Types for proofs and programs}, pages 1--23. Springer, 2002.
\newblock \href {http://dx.doi.org/10.1007/3-540-45842-5_1}
  {\path{doi:10.1007/3-540-45842-5_1}}.

\bibitem{aczel2001notes}
P.~Aczel and M.~Rathjen.
\newblock Notes on constructive set theory.
\newblock 2001.

\bibitem{1categories}
B.~Ahrens, K.~Kapulkin, and M.~Shulman.
\newblock Univalent categories and the {R}ezk completion.
\newblock {\em MSCS}, 2013.
\newblock \href {http://arxiv.org/abs/1303.0584} {\path{arXiv:1303.0584}}.

\bibitem{Altenkirch1999}
T.~Altenkirch.
\newblock Extensional equality in intensional type theory.
\newblock In {\em 14th Symposium on Logic in Computer Science}, pages 412 --
  420, 1999.
\newblock \href {http://dx.doi.org/10.1109/LICS.1999.782636}
  {\path{doi:10.1109/LICS.1999.782636}}.

\bibitem{Avigad:limits}
J.~Avigad, K.~Kapulkin, and P.~Lumsdaine.
\newblock Homotopy limits in {C}oq.
\newblock 2013.
\newblock \href {http://arxiv.org/abs/1304.0680} {\path{arXiv:1304.0680}}.

\bibitem{awodey2012type}
S.~Awodey.
\newblock Type theory and homotopy.
\newblock {\em Epistemology versus Ontology}, pages 183--201, 2012.

\bibitem{AwodeyBauer2004}
S.~Awodey and A.~Bauer.
\newblock Propositions as [types].
\newblock {\em J. Logic Comput.}, 14(4):447--471, 2004.
\newblock URL: \url{http://dx.doi.org/10.1093/logcom/14.4.447}, \href
  {http://dx.doi.org/10.1093/logcom/14.4.447}
  {\path{doi:10.1093/logcom/14.4.447}}.

\bibitem{awodey2012inductive}
S.~Awodey, N.~Gambino, and K.~Sojakova.
\newblock Inductive types in homotopy type theory.
\newblock In {\em Logic in Computer Science (LICS), 2012}, pages 95--104. IEEE,
  2012.

\bibitem{Barras}
B.~Barras.
\newblock Native implementation of higher inductive types ({HIT}s) in {C}oq.
\newblock Technical report, 2013.
\newblock URL:
  \url{http://www.crm.cat/en/Activities/Documents/barras-crm-2013.pdf}.

\bibitem{coquand2012constructive}
B.~Barras, T.~Coquand, and S.~Huber.
\newblock A generalization of {T}akeuti-{G}andy interpretation.
\newblock 2013.
\newblock URL: \url{http://uf-ias-2012.wikispaces.com/file/view/semi.pdf}.

\bibitem{vandenBerg2012}
B.~van~den Berg.
\newblock Predicative toposes.
\newblock 2012.
\newblock \href {http://arxiv.org/abs/1207.0959} {\path{arXiv:1207.0959}}.

\bibitem{Cubical}
M.~Bezem, Th. Coquand, and S.~Huber.
\newblock A model of type theory in cubical sets.
\newblock {\em Submitted to {TYPES’13} postproceedings}, 2013.

\bibitem{Bishop1967}
E.~Bishop.
\newblock {\em Foundations of constructive analysis}.
\newblock McGraw-Hill, 1967.

\bibitem{Coq}
{Coq Development Team}.
\newblock {\em The {C}oq Proof Assistant Reference Manual}.
\newblock INRIA-Rocquencourt, 2012.

\bibitem{Danielsson}
N.~Danielsson.
\newblock Closure properties for h-levels.
\newblock Technical report, 2013.
\newblock URL:
  \url{http://www.cse.chalmers.se/~nad/listings/equality/H-level.Closure.html}.

\bibitem{malatesta2012small}
P.~Hancock, C.~McBride, N.~Ghani, L.~Malatesta, and T.~Altenkirch.
\newblock Small induction recursion.
\newblock In M.~Hasegawa, editor, {\em Typed Lambda Calculi and Applications},
  volume 7941 of {\em Lecture Notes in Computer Science}, pages 156--172.
  Springer, 2013.
\newblock \href {http://dx.doi.org/10.1007/978-3-642-38946-7_13}
  {\path{doi:10.1007/978-3-642-38946-7_13}}.

\bibitem{hofmann1995extensional}
M.~Hofmann.
\newblock {\em Extensional concepts in intensional type theory}.
\newblock PhD thesis, University of Edinburgh, 1995.

\bibitem{hofmann1998groupoid}
M.~Hofmann and T.~Streicher.
\newblock The groupoid interpretation of type theory.
\newblock {\em Twenty-five years of constructive type theory (Venice, 1995)},
  36:83--111, 1998.

\bibitem{jacobs1999categorical}
B.~Jacobs.
\newblock {\em Categorical logic and type theory}, volume 141.
\newblock Elsevier, 1999.

\bibitem{johnstone:elephant}
P.~Johnstone.
\newblock {\em Sketches of an elephant: A topos theory compendium}.
\newblock Oxford, 2002.

\bibitem{JoyalMoerdijk1995}
A.~Joyal and I.~Moerdijk.
\newblock {\em Algebraic set theory}, volume 220 of {\em London Mathematical
  Society Lecture Note Series}.
\newblock Cambridge, 1995.
\newblock \href {http://dx.doi.org/10.1017/CBO9780511752483}
  {\path{doi:10.1017/CBO9780511752483}}.

\bibitem{kapulkin2012univalence}
K.~Kapulkin, P.~Lumsdaine, and V.~Voevodsky.
\newblock The simplicial model of univalent foundations.
\newblock 2012.
\newblock \href {http://arxiv.org/abs/1211.2851} {\path{arXiv:1211.2851}}.

\bibitem{lambek1988introduction}
J.~Lambek and P.~Scott.
\newblock {\em Introduction to higher-order categorical logic}, volume~7.
\newblock Cambridge University Press, 1988.

\bibitem{lawvere2003sets}
F.~Lawvere and R.~Rosebrugh.
\newblock {\em Sets for mathematics}.
\newblock Cambridge, 2003.
\newblock \href {http://dx.doi.org/10.1017/CBO9780511755460}
  {\path{doi:10.1017/CBO9780511755460}}.

\bibitem{LumsdaineShulman}
P.~Lumsdaine and M.~Shulman.
\newblock Higher inductive types.
\newblock In preparation, 2013.

\bibitem{lurie2009higher}
J.~Lurie.
\newblock {\em Higher topos theory}, volume 170.
\newblock Princeton, 2009.

\bibitem{MacLaneMoerdijk}
S.~Mac~Lane and I.~Moerdijk.
\newblock Topos theory.
\newblock In {\em Handbook of algebra}, volume~1, pages 501--528. Elsevier,
  1996.
\newblock \href {http://dx.doi.org/10.1016/S1570-7954(96)80018-0}
  {\path{doi:10.1016/S1570-7954(96)80018-0}}.

\bibitem{MSC:359803}
M.~Maietti.
\newblock Modular correspondence between dependent type theories and categories
  including pretopoi and topoi.
\newblock {\em Mathematical Structures in Computer Science}, 15:1089--1149, 12
  2005.
\newblock URL: \url{http://journals.cambridge.org/article_S0960129505004962},
  \href {http://dx.doi.org/10.1017/S0960129505004962}
  {\path{doi:10.1017/S0960129505004962}}.

\bibitem{maietti2012elementary}
M.~Maietti and G.~Rosolini.
\newblock Elementary quotient completion.
\newblock {\em arXiv preprint arXiv:1206.0162}, 2012.

\bibitem{maietti2005toward}
M.~Maietti and G.~Sambin.
\newblock Toward a minimalist foundation for constructive mathematics.
\newblock {\em From Sets and Types to Topology and Analysis: Practicable
  Foundations for Constructive Mathematics}, 48:91--114, 2005.

\bibitem{Mines/R/R:1988}
R.~Mines, F.~Richman, and W.~Ruitenburg.
\newblock {\em A course in constructive algebra}.
\newblock Springer, 1988.
\newblock \href {http://dx.doi.org/10.1007/978-1-4419-8640-5}
  {\path{doi:10.1007/978-1-4419-8640-5}}.

\bibitem{MoerdijkPalmgren2002}
I.~Moerdijk and E.~Palmgren.
\newblock Type theories, toposes and constructive set theory: predicative
  aspects of {AST}.
\newblock {\em Ann. Pure Appl. Logic}, 114(1-3):155--201, 2002.
\newblock \href {http://dx.doi.org/10.1016/S0168-0072(01)00079-3}
  {\path{doi:10.1016/S0168-0072(01)00079-3}}.

\bibitem{agda}
U.~Norell.
\newblock {\em Towards a practical programming language based on dependent type
  theory}.
\newblock Chalmers University of Technology, 2007.

\bibitem{palmgren2012constructivist}
E.~Palmgren.
\newblock Constructivist and structuralist foundations: Bishop’s and
  {L}awvere’s theories of sets.
\newblock {\em Ann. Pure Appl. Logic}, 2012.
\newblock \href {http://dx.doi.org/10.1016/j.apal.2012.01.011}
  {\path{doi:10.1016/j.apal.2012.01.011}}.

\bibitem{pelayo2013preliminary}
{\'A}.~Pelayo, V.~Voevodsky, and M.~Warren.
\newblock A preliminary univalent formalization of the p-adic numbers.
\newblock 2013.
\newblock \href {http://arxiv.org/abs/1302.1207} {\path{arXiv:1302.1207}}.

\bibitem{pelayo2012homotopy}
{\'A}.~Pelayo and M.~Warren.
\newblock Homotopy type theory and {V}oevodsky's univalent foundations.
\newblock 2012.
\newblock \href {http://arxiv.org/abs/1210.5658} {\path{arXiv:1210.5658}}.

\bibitem{rezk2010toposes}
C.~Rezk.
\newblock Toposes and homotopy toposes (version 0.15).
\newblock 2010.

\bibitem{RijkeSpitters:colims}
E.~Rijke and B.~Spitters.
\newblock Limits and colimits in homotopy type theory.
\newblock In preparation, 2013.

\bibitem{RijkeSpitters:Factorization}
E.~Rijke and B.~Spitters.
\newblock Orthogonal factorization systems in homotopy type theory.
\newblock Technical report, 2013.
\newblock URL:
  \url{http://uf-ias-2012.wikispaces.com/file/view/images.pdf/401765624/images.pdf}.

\bibitem{shulman2012univalence}
M.~Shulman.
\newblock The univalence axiom for inverse diagrams and homotopy canonicity.
\newblock {\em MSCS}, 2013.
\newblock \href {http://arxiv.org/abs/1203.3253} {\path{arXiv:1203.3253}}.

\bibitem{Spiwack}
A.~Spiwack.
\newblock {\em A Journey Exploring the Power and Limits of Dependent Type
  Theory}.
\newblock PhD thesis, {\'Ecole Polytechnique, Palaiseau, France}, 2011.

\bibitem{Streicher:sstt}
T.~Streicher.
\newblock A model of type theory in simplicial sets.
\newblock {\em Journal of Applied Logic}, 2013.
\newblock \href {http://dx.doi.org/10.1016/j.jal.2013.04.001}
  {\path{doi:10.1016/j.jal.2013.04.001}}.

\bibitem{streicher1992independence}
Th. Streicher.
\newblock Independence of the induction principle ad the axiom of choice in the
  pure calculus of constructions.
\newblock {\em Theoretical computer science}, 103(2):395--408, 1992.
\newblock \href
  {http://dx.doi.org/http://dx.doi.org/10.1016/0304-3975(92)90021-7}
  {\path{doi:http://dx.doi.org/10.1016/0304-3975(92)90021-7}}.

\bibitem{TheBook}
The {U}nivalent Foundations~Program.
\newblock {\em Homotopy type theory: Univalent foundations of mathematics}.
\newblock Institute for Advanced Study, 2013.
\newblock \href {http://arxiv.org/abs/1308.0729} {\path{arXiv:1308.0729}}.

\bibitem{van2009three}
B.~Van Den~Berg.
\newblock Three extensional models of type theory.
\newblock {\em Mathematical Structures in Computer Science}, 19(2):417--434,
  2009.
\newblock \href {http://dx.doi.org/10.1017/S0960129509007440}
  {\path{doi:10.1017/S0960129509007440}}.

\bibitem{Universe-poly}
V.~Voevodsky.
\newblock A universe polymorphic type system.
\newblock 2012.
\newblock URL:
  \url{http://uf-ias-2012.wikispaces.com/file/view/Universe+polymorphic+type+sytem.pdf}.

\bibitem{voevodsky2013experimental}
V.~Voevodsky.
\newblock Experimental library of univalent formalization of mathematics.
\newblock Technical report, 2014.
\newblock \href {http://arxiv.org/abs/1401.0053} {\path{arXiv:1401.0053}}.

\bibitem{werner1997sets}
B.~Werner.
\newblock Sets in types, types in sets.
\newblock In {\em Theoretical aspects of computer software}, pages 530--546.
  Springer, 1997.
\newblock \href {http://dx.doi.org/10.1016/j.jal.2013.04.001}
  {\path{doi:10.1016/j.jal.2013.04.001}}.

\bibitem{Wilander2010}
O.~Wilander.
\newblock Setoids and universes.
\newblock {\em Math. Structures Comput. Sci.}, 20(4):563--576, 2010.
\newblock \href {http://dx.doi.org/10.1017/S0960129510000071}
  {\path{doi:10.1017/S0960129510000071}}.

\end{thebibliography}
\end{document}